\documentclass[a4paper,11pt]{article}

\usepackage{amsmath}
\usepackage{booktabs}
\usepackage{amssymb}
\usepackage{enumerate}
\usepackage{amsopn}
\usepackage{amsthm}
\usepackage{booktabs}
\usepackage{pdfpages}
\usepackage{float}
\usepackage{mathrsfs}
\usepackage[dvipsnames]{xcolor}
\usepackage[margin=30mm]{geometry}
\usepackage{graphicx}
	\graphicspath{{./}}
\usepackage{hyperref}
	\hypersetup{
		colorlinks=true,
		linkcolor=blue,
		filecolor=blue,
		urlcolor=blue,
		citecolor=blue
	}

\usepackage{soul}
\usepackage{url}

\usepackage{setspace}
\newtheorem{thm}{Theorem}[section]
\newtheorem{cor}[thm]{Corollary}
\newtheorem{lem}[thm]{Lemma}
\newtheorem{prop}[thm]{Proposition}

\theoremstyle{definition}
\newtheorem{defn}[thm]{Definition}
\newtheorem{example}[thm]{Example}

\theoremstyle{remark}
\newtheorem{remark}[thm]{Remark}
\newtheorem{claim}[thm]{Claim}

\DeclareMathOperator{\Type}{type}

\DeclareMathOperator{\Col}{Col} 
\DeclareMathOperator{\Hom}{Hom} 
\DeclareMathOperator{\Op}{Op}

\newcommand{\Z}{\mathbb{Z}}

\newcommand{\N}{\mathbb{N}}
\newcommand{\CP}{\mathbb{C}P}
\newcommand{\K}{\mathcal{K}}
\newcommand{\D}{\mathcal{D}}

\numberwithin{equation}{section}

\usepackage{color}
\usepackage{url}
\definecolor{darkgreen}{cmyk}{1,0,1,.2}
\definecolor{darkorchid}{rgb}{1.0, 0.5, 0}
\definecolor{persimmon}{rgb}{0.93, 0.35, 0.0}

\newdimen\theight
\def\TeXref#1{%
             \leavevmode\vadjust{\setbox0=\hbox{{\tt
                     \quad\quad  {\small \textrm #1}}}%
             \theight=\ht0
             \advance\theight by \lineskip
             \kern -\theight \vbox to
             \theight{\rightline{\rlap{\box0}}%
             \vss}%
             }}%

\title{Quandle presentations of surface knots in $4$-manifolds and bridge numbers}

\author{Zhou Xiaozhou\thanks{Department of Mathematical Sciences, Ritsumeikan University, 1-1-1 Nojihigashi, Kusatsu, Shiga 525-8577, Japan. 
E-mail: \texttt{ra0108ik@ed.ritsumei.ac.jp}}}

\date{}

\begin{document}

\maketitle


\begin{abstract}
The fundamental quandle is an invariant for distinguishing surface knots, yet 
computable presentations have traditionally been limited to surfaces embedded in 
the $4$-sphere. Building on the framework of banded unlink diagrams introduced 
by Hughes, Kim, and Miller, we give a Wirtinger type presentation of the fundamental quandle of surface links in arbitrary $4$-manifolds. As applications, we extend the work of Sato and Tanaka to show that for any $b \geq 4$ and $m \geq 0$, there exist infinitely many pairwise non-local surface knots with bridge number $b$ in $\mathbb{C}P^2 \#m\overline{\mathbb{C}P^2}$, and we distinguish infinite families of surface knots with isomorphic knot groups, extending results of Tanaka and Taniguchi.
\end{abstract}

\section{Introduction}

The knot group, defined as the fundamental group of the complement, has traditionally served as a fundamental invariant in the study of surface knots. However, the knot group is not a complete invariant, as there exist non-isotopic surface knots with isomorphic knot groups. To capture finer topological information, Joyce and Matveev independently introduced the algebraic structure of a quandle in the 1980s \cite{joyce1982knotquandle, matveev1984distributive}. This theory was further developed by Fenn and Rourke, who defined racks and augmented quandles, establishing them as essential invariants for codimension-two knots \cite{fenn1992racks}.

In the case of surface knots embedded in the $4$-sphere $S^4$, the theory is well-established. These surfaces can be effectively described by various diagrammatic methods, such as broken surface diagrams, motion pictures, and marked graph diagrams (ch-diagrams)~\cite{carter2013surfaces}. Based on these descriptions, combinatorial algorithms for deriving Wirtinger-type presentations of the fundamental quandle have been extensively developed. For instance, Kamada, Kim, and Lee provided a method for computing quandle invariants from marked graph diagrams~\cite{kamada2015computations}, and Ashihara developed algorithms to calculate fundamental biquandles from ch-diagrams~\cite{ashihara2012calculating}. These computational tools enable the extraction of algebraic data directly from the geometric descriptions of surfaces in $S^4$.

However, for surface knots embedded in general $4$-manifolds, such as the complex projective plane $\mathbb{C}P^2$, classical diagrammatic methods are often insufficient or difficult to apply. To investigate surface knots in arbitrary $4$-manifolds, two powerful and complementary frameworks have recently emerged: banded unlink diagrams and bridge trisections.

Hughes, Kim, and Miller introduced banded unlink diagrams, which represent surfaces by using Kirby diagram of $4$-manifold~\cite{hughes2020isotopies}, providing a framework to extract algebraic invariants directly from diagrams. Complementing this, Meier and Zupan developed the theory of bridge trisections~\cite{meier2018bridge}, extending the work of Gay and Kirby~\cite{gay2016trisecting}.

In this paper, we establish a diagrammatic method to compute the fundamental quandle of a surface link $S$ embedded in an arbitrary $4$-manifold. For a banded unlink diagram $\mathcal{D}$ of $S$, we define a diagrammatic quandle $Q(\mathcal{D})$ by encoding the crossing information and the band data of the diagram. We prove that this algebraically defined quandle is isomorphic to the topological fundamental quandle $Q(S)$, thereby providing a computable Wirtinger type presentation for surface knots in general $4$-manifolds. Our main result is stated as follows:

\begingroup
    \newtheoremstyle{boldheader}
    {}
    {}
    {\itshape}
    {}
    {\bfseries}
    {.}
    { }
    {\thmname{#1}\thmnote{ #3}}

    \theoremstyle{boldheader}
    \newtheorem*{thmRepeat}{Theorem \ref{thm:qdle_presentation}} 
    
    \begin{thmRepeat}
        Let $S$ be a surface link and $\mathcal{D}$ be a banded unlink diagram of $S$. Then $Q(S)$ and $Q(\mathcal{D})$ are isomorphic. 
    \end{thmRepeat}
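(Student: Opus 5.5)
We prove the theorem by decomposing the ambient $4$-manifold $X$ along its handle decomposition and computing $Q(S)$ with a van Kampen type theorem for fundamental quandles, applied level by level. A banded unlink diagram $\mathcal{D}=(\mathcal{K},L,v)$ consists of a Kirby diagram $\mathcal{K}$ of $X$, an unlink $L$ in $\#^k(S^1\times S^2)$ drawn in $\mathcal{K}$, and bands $v$. It determines a decomposition of $(X,S)$ into four pieces:
\begin{enumerate}[(i)]
\item the $0$- and $1$-handles, with $S$ meeting this part in the minima, i.e.\ trivial disks bounded by $L$;
\item a collar $Y\times[0,1]$, where $Y=\#^k(S^1\times S^2)$ surgered along the $2$-handles, containing the saddles given by the bands;
\item the $2$-handles, disjoint from $S$;
\item the $3$- and $4$-handles, where $S$ meets the region in the maxima, i.e.\ trivial disks bounded by $L_v$ in $\#^{\ell}(S^1\times S^2)$.
\end{enumerate}
I will write $X_{\le t}$ for the part of $X$ below level $t$, and $S_{\le t}=S\cap X_{\le t}$.

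\textbf{Step 1.} Compute the fundamental quandle of the middle cross-section $(Y, L)$. I would first show that the fundamental quandle of a link in a $3$-manifold given by a diagram in $\#^k(S^1\times S^2)$ with $2$-handle attaching curves has a Wirtinger presentation. The generators are the arcs of $L$. The relations are of two kinds: the crossing relations of $L$ with itself, and the relations coming from arcs of $L$ passing under $1$-handles and $2$-handle curves. For the $1$-handles, I would use the usual dotted-circle or ball-pair convention and identify arcs as they pass through. For the $2$-handles, I expect the arcs of $L$ passing under a $2$-handle curve $K$ to acquire the action of the meridian of $K$. The effect of attaching a $2$-handle along $K$ is to kill the element of $\pi_1$ represented by $K$, and this element acts on the quandle through the associated group, via the framing longitude word. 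I assume the paper's definition of $Q(\mathcal{D})$ encodes exactly these relations.

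\textbf{Step 2.} Add the bands and the cap/cup disks. Attaching the minimal disks (i) to the region near level $0$ adds no generators. The reason is that the complement of a trivial disk deformation retracts to the complement of its boundary together with an attached meridional $2$-cell, so the quandle is unchanged. Each band in (ii) contributes the relation that the two arcs at its attaching site are identified, up to the action of any strands passing over the band. This is the standard saddle relation from marked graph diagrams, as in \cite{kamada2015computations}. The maximal disks in (iv) again contribute nothing new, because adding trivial disks to unlinks in $\#^{\ell}(S^1\times S^2)$ and capping with $3$- and $4$-handles does not change $\pi_1$ or the quandle. This holds because $3$-handles and $4$-handles attach along $2$- and $3$-dimensional spheres, which are simply connected, so they impose no relations on quandle paths.

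\textbf{Step 3: main obstacle.} The hard part is justifying the van Kampen step for quandles, namely that
\[
Q(X\setminus S) \cong Q(X_{\le t}\setminus S_{\le t}) *_{Q(Y\setminus L)} Q(X_{\ge t}\setminus S_{\ge t}),
\]
and making sure that the $2$-handle relations match the topology. The fundamental quandle is defined via paths from the boundary of a tubular neighbourhood to a basepoint, so I will not use a quandle van Kampen theorem directly. Instead, I would use the standard identification $Q(S)\cong$ the set of meridians inside the knot group $\pi_1(X\setminus S)$, modulo the action. More precisely, $Q(S)$ is the quandle of pairs (meridian disk, path class). A presentation of $Q(S)$ then follows from a presentation of $\pi_1(X\setminus S)$ together with the peripheral data. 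Concretely, I would:
\begin{enumerate}[(a)]
\item compute $\pi_1(X\setminus S)$ via Seifert--van Kampen over the handle decomposition, which yields Wirtinger generators with crossing, band, and $2$-handle relations;
\item invoke the known correspondence between Wirtinger group presentations and quandle presentations (Joyce/Fenn--Rourke) to lift this to a quandle isomorphism. This requires checking that every relator is of conjugation form $x^{w}=y$, with $x,y$ meridians.
\end{enumerate}
The $2$-handle relations are the delicate case. The relator $K=1$ is not obviously of that form, and I would have to argue that in the quandle it becomes the statement that the action of the word $K$ fixes every generator. To do this I would present it diagrammatically by sliding arcs of $L$ over the $2$-handle.

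Finally, I would check invariance under the banded unlink moves of Hughes, Kim, and Miller. This is not strictly needed, since the isomorphism is natural, but it serves as a consistency check.
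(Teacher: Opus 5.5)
\textbf{The gap is Step~3(b).} Lifting a Wirtinger-type presentation of $G(S)$ to a presentation of $Q(S)$ is not a valid general principle, and checking that every relator has the form $y=x^{w}$ is not enough.

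For connected $S$ one has $Q(S)\cong P\backslash G(S)$. This is the coset quandle of the peripheral subgroup $P$, the image of $\pi_1(\partial\nu(S))$, which is generated by a meridian and push-offs of loops on $S$. The quandle presented by lifted relators is instead $H\backslash G$, where $H$ is the stabilizer of a generator. A priori you only know $H\subseteq P$.

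Here is an example. The trefoil group is $\langle x,y\mid y=(yx)^{-1}x(yx)\rangle$, a single relator of conjugation form. However, $[x,y\mid y=x^{yx}]\cong\langle x\rangle\backslash G$, and the central element $(xy)^3\in P$ acts nontrivially on it. The trefoil quandle needs the further relation $x=y^{xy}$.

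For the same reason, your ``standard identification'' of $Q(S)$ with the set of meridians in $G(S)$ fails whenever the centralizer of a meridian is larger than $P$. This already happens for the spun trefoil, where the trefoil longitude commutes with the meridian. You mention the peripheral data but never use it.

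What actually has to be proved is the following.
\begin{enumerate}[(1)]
\item Every noose can be slid across the minimal or maximal disks to a noose at level $3/2$, up to the action of a loop.
\item Two expressions $x_i^{u}$, $x_j^{v}$ whose nooses are homotopic are equal in $Q(\mathcal{D})$; that is, $P$ fixes the corresponding generator.
\end{enumerate}
The paper proves (1) directly. It proves (2) by constructing the inverse $\lambda$ on nooses and decomposing a homotopy of nooses, via $\pi_2(E(S),E(S)\cap X_{3/2})$, into two kinds of steps: moves inside $X_{3/2}$ (Wirtinger relations) and passages across cores of $2$-handles of $E(S)$ (band and $2$-handle relations).

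In your framework, the missing argument would be to homotope each loop on $S$ into the banded unlink $L\cup v$, which is possible since $S_\pm$ are disks. You would then check that the crossing and band relations met along its push-off fix the generator. Your proposal contains neither argument, and the van Kampen step you single out as the main obstacle is never carried out.

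\textbf{Step~1 also contains an error.} The link $L$ and the bands live in $X_{3/2}$, the boundary of the $0$- and $1$-handles, in the complement of $L_2$ and before any surgery. So your $Y$ (``surgered along the $2$-handles'') and your ordering of (ii) and (iii) are inconsistent.

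A meridian of a component of $L_2$ bounds a small disk in $X_{3/2}$ disjoint from $S$, so it is trivial in $G(S)$. This is why, in $Q(\mathcal{D})$, an arc of $L$ passing under $L_2$ keeps its label. The only contribution of a $2$-handle is the operator relation that the word read along its attaching circle is trivial.

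Your rule (meridians of $L_2$ act, framed longitudes are killed) instead computes the complement of $L$ in the surgered $3$-manifold. For the line $\CP^1\subset\CP^2$, drawn as a meridian of the $(+1)$-framed unknot, your rule gives operator group $\Z$, whereas $G(\CP^1)=\pi_1(B^4)=1$. In that reading, your claim that the lower piece changes nothing also fails. The piece below the surgered level contains the $2$-handles, and that is exactly where those meridians become trivial.
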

\endgroup

As a consequence of Theorem~\ref{thm:qdle_presentation}, the fundamental kei of $S\# C_d$ can be described explicitly in terms of the parity of $d$ (see Corollary~\ref{cor:K(S+C_d)}). 

As the first application, we use kei colorings, namely quandle homomorphisms 
into finite kei, to study the bridge number of surface knots. This extends the 
work of Sato and Tanaka on surface knots in $S^4$ \cite{sato2022bridge} to 
arbitrary $4$-manifolds without $1$-handles for local surface knots.
\begingroup
    \newtheoremstyle{boldheader}{}{}{\itshape}{}{\bfseries}{.}{ }
    {\thmname{#1}\thmnote{ #3}}
    \theoremstyle{boldheader}
    \newtheorem*{corRepeat1}{Corollary \ref{cor:local_infinite_many}} 
    \begin{corRepeat1}
        Let $X$ be a closed $4$-manifold which admits a handle decomposition 
        without $1$-handle. For any integer $b\geq4$, there exist infinitely 
        many pairwise non-isotopic local embedded surfaces with bridge number 
        $b$ in $X$.
    \end{corRepeat1}
\endgroup
Furthermore, by constructing examples of surfaces which are not null-homologous, 
we obtain the following result for the non-local case:
\begingroup
    \newtheoremstyle{boldheader}{}{}{\itshape}{}{\bfseries}{.}{ }
    {\thmname{#1}\thmnote{ #3}}
    \theoremstyle{boldheader}
    \newtheorem*{thmRepeat1}{Theorem \ref{thm:infinitely_many_bridge}} 
    \begin{thmRepeat1}
        In $\CP^2\#m\overline{\CP}^2$, for any integer $b\geq4$ and $m\geq0$, 
        there exist infinitely many pairwise homologous non-isotopic embedded 
        surfaces with bridge number $b$.
    \end{thmRepeat1}
\endgroup

As the second application, we generalize known phenomena in $S^4$ concerning 
surface knots with identical knot groups but distinct fundamental quandles 
\cite{tanaka20252} to arbitrary $4$-manifolds without $1$-handles for local 
surface knots.
\begingroup
    \newtheoremstyle{boldheader}{}{}{\itshape}{}{\bfseries}{.}{ }
    {\thmname{#1}\thmnote{ #3}}
    \theoremstyle{boldheader}
    \newtheorem*{corRepeat2}{Corollary \ref{cor:quandle_fund_group}} 
    \begin{corRepeat2}
        Let $X$ be a $4$-manifold without $1$-handle. There exist infinitely 
        many triples of pairwise non-isotopic local surfaces $\{S_1,S_2,S_3\}$ 
        embedded in $X$ such that 
        \begin{enumerate}[(1)]
            \item the knot groups $G(S_1), G(S_2), G(S_3)$ are isomorphic 
            to each other, 
            \item the knot quandles $Q(S_1),Q(S_2),Q(S_3)$ are pairwise 
            non-isomorphic. 
        \end{enumerate}
    \end{corRepeat2}
\endgroup
Furthermore, by constructing examples of surfaces which are not null-homologous, 
we obtain the following result for the non-local case:
\begingroup
    \newtheoremstyle{boldheader}{}{}{\itshape}{}{\bfseries}{.}{ }
    {\thmname{#1}\thmnote{ #3}}
    \theoremstyle{boldheader}
    \newtheorem*{thmRepeat2}{Theorem \ref{thm:infinite_qdl_CP2}} 
    \begin{thmRepeat2}
        For every integer $m\geq0$, there exist infinitely many triples of 
        pairwise homologous surfaces $\{S_1,S_2,S_3\}$ embedded in 
        $\CP^2\#m\overline{\CP}^2$ which are not local, such that 
        \begin{enumerate}[(1)]
            \item the knot groups $G(S_1), G(S_2), G(S_3)$ are isomorphic;
            \item the knot quandles $Q(S_1),Q(S_2),Q(S_3)$ are pairwise 
            non-isomorphic. 
        \end{enumerate}
    \end{thmRepeat2}
\endgroup

\subsection*{Organization of the paper}

In section \ref{sec:preliminaries}, we review the basic definitions of quandle, augmented quandle, and the fundamental quandle of a surface knot. Section \ref{sec:diagrammatic} establishes a diagrammatic method for computing the fundamental quandle. We recall the notion of banded unlink diagram due to \cite{hughes2020isotopies} and define the diagrammatic quandle $Q(\D)$ associated with such a diagram. The main result of this section, Theorem~\ref{thm:qdle_presentation}, assert that $Q(\D)$ is isomorphic to the topological fundamental quandle $Q(S)$. In Section \ref{sec:applications}, we apply these invariants to study the bridge numbers of surface knots and to distinguish surface knots with isomorphic knot groups.


\section{Preliminary}\label{sec:preliminaries}

\subsection{Quandles}

Let us begin by recalling the definitions of quandles and augmented quandles, following \cite{fenn1992racks}.

\begin{defn}
    A \textit{quandle} is a set $Q$ equipped with a binary operation $(x,y) \mapsto x^y$ which satisfies the following three axioms:
	\begin{enumerate}[(1)]
		\item For any $x$ in $Q$, we have $x^x=x$.
		\item For any $x,y$ in $Q$, there exists a unique element $z$ in $Q$ such that $x=z^y$.
		\item For any $x,y,z$ in $Q$, the identity $(x^y)^{z}=(x^{z}){}^{y^z}$ holds.
	\end{enumerate}
    For simplicity, we denote $(x^y)^z$ by $x^{yz}$, and denote $x^{(y^z)}$ by $x^{y^z}$. 
    A quandle $Q$ is called a \textit{kei} if $x^{yy} = x$ for all $x,y$ in $Q$.
\end{defn}

\begin{example}
	A \textit{dihedral quandle} $R_n$ is the set $R_n=\Z/ n\Z$ equipped with the operation $x^y:=2y-x$. A direct computation shows that $x^{yy}=x$ for all $x,y$ in $R_n$. Hence, $R_n$ is a kei. 
\end{example}

By the second axiom, for each $y\in Q$, we can define a bijection $f_y:Q\to Q$ by $f_y(t)=t^y$. We denote its inverse by $f_y^{-1}$. We shall write $x^{\bar y}=f_y^{-1}(x)$. Note that $\bar y$ is not an element of $Q$. We just consider the mapping $x \mapsto x^{\bar{y}}$, which is called an operator. With this notation, the third axiom can be rewritten in the following equivalent form:
\begin{align*}
    x^{y^z} = x^{\bar{z} y z}, \quad \forall x, y, z \in Q.
\end{align*}
This leads us to regard each element of a quandle as an operator. 

In expressions such as $x^{y\bar z}$, we refer to $x$ as being at the \textit{primary level}, and to $y,\bar z$ as being at the \textit{operator level}.
By the third axiom, any expression obtained by iterated quandle operations can be written in the form $x^w$, where $x$ in $Q$ is at the primary level and $w$ is a word in the free group $F(Q)$ on $Q$, at the operator level. From this viewpoint, the operation of quandle could be interpreted into an action of $F(Q)$ on $Q$, written as $(x,g)\mapsto x\cdot g$. 

Two elements $x,y\in Q$ are said to be \textit{operator equivalent} if $f_x=f_y$, and we write  $x\equiv y$. 
The corresponding equivalence classes define the \textit{operator group} $\Op(Q)$ of the quandle.
 More precisely, $\Op(Q)$ is defined as the quotient $F(Q)/N$, where $F(Q)$ is the free group generated by the element of $Q$, and $N$ is the normal subgroup
$$N=\{w\in F(Q)\mid w\equiv1\}.$$

This operator interpretation motivates a natural variant of quandles,
in which the operator group is made explicit.

\begin{defn}
    An \emph{augmented quandle} is a pair $(X,G)$ together with the following data:
    \begin{itemize}
      \item a right action of the group $G$ on the set $X$, written as $(x,g)\mapsto x^g$;
      \item the conjugation action of $G$ on itself, given by $g^h = h^{-1}gh$;
      \item a map $\partial : X \to G$, called the \emph{augmentation map},
    \end{itemize}
    such that the following \emph{augmentation identity} holds:
    \[
    \partial(x^g) = g^{-1}\partial(x)g
    \quad \text{for all } x \in X \text{ and } g \in G.
    \]
\end{defn}

Given an augmented quandle $(X,G)$, the induced quandle operation on $X$ is defined by letting $x^y$ be the result of acting on $x$ by $\partial(y) \in G$. The augmentation identity then implies that
\[
\partial(x^y)=\partial(y)^{-1}\partial(x)\partial(y),
\]
which shows that the quandle operation is compatible with the conjugation action in $G$.

From now on, we shall use the notation $x^g$ to denote the action of $g\in G$ on $x\in X$,
and, whenever an element $y\in X$ appears at the operator level,
identify it with its image $\partial(y)\in G$.

We introduce the general quandle presentations, following the definition in \cite{fenn1992racks}. 

\begin{defn}
	Let $Q$ be a quandle. An equivalence relation $\sim$ on $Q$ is called a \textit{congruence} if, for all $x,y,z,w\in Q$, the condition $x\sim y, z\sim w$ implies $x^z\sim y^w$. 
\end{defn}

\begin{defn}
    Let $S$ and $T$ be two sets, and denote by $F(S \cup T)$ the free group on $S \cup T$. 
    The \textit{extended free quandle}, denoted by $FQ(S, T)$, is the augmented quandle defined on the set $S \times F(S \cup T)$ with the acting group $F(S \cup T)$.
    
    Following standard notation, we denote the element $(x, w)$ in $S \times F(S \cup T)$ by $x^w$. The augmentation map $\partial: FQ(S, T) \to F(S \cup T)$ is given by $\partial(x^w):=w^{-1}xw$. The associated quandle operation is given by
    \[
    (x^w)^{(y^z)} = (x^w)^{\partial(y^z)} = x^{wz^{-1}yz}
    \]
    for any $x,y\in S$ and $w,z\in F(S \cup T)$.
\end{defn}

\begin{defn}
    A \textit{general augmented quandle presentation} is a quadruple $ [S_P, S_O \mid R_P, R_O] $, consisting of the following four sets:
    \begin{itemize}
        \item $S_P$: a set of \textit{primary generators};
        \item $S_O$: a set of \textit{operator generators};
        \item $R_P$: a set of \textit{primary relations}, which are equations of the form $x=y$ with $x,y$ are elements of $FQ(S_P, S_O)$;
        \item $R_O$: a set of \textit{operator relations}, which are relations in the group $F(S_P \cup S_O)$.
    \end{itemize}
    The augmented quandle defined by such a presentation is the quotient 
    \[
    Q := FQ(S_P, S_O) / \sim,
    \]
    where $\sim$ is the smallest congruence on $FQ(S_P, S_O)$ satisfying:
    \begin{enumerate}
        \item $x \sim y$ whenever $(x=y)$ is in $R_P$;
        \item $z^{\partial(x)} \sim z^{\partial(y)}$ (or simply $z^x \sim z^y$) for any $z$ in $FQ(S_P, S_O)$, whenever $(x=y)$ is in $R_P$;
        \item $z^u \sim z^v$ for any $z$ in $FQ(S_P, S_O)$, whenever $(u=v)$ is in $R_O$.
    \end{enumerate}
    If $S_O = \emptyset$, the presentation is called a \emph{primary quandle presentation}.
\end{defn}

\begin{remark}\label{rmk}
    When the operator group is finitely presented, the operator relations $R_O$ often arise from a presentation of the operator group. More precisely, suppose that the operator group has a presentation $\langle S_O \mid R_G\rangle$. Via the augmentation map $\partial:Q\to G$, each relation in $R_G$ induces an operator relation $u\equiv v$ in $R_O$. 
\end{remark}


\subsection{Fundamental augmented quandles of surface knots in $4$-manifolds}

The definition of fundamental quandle of a link is introduced in \cite{fenn1992racks}. Let $S$ be a surface link embedded in the $4$-manifold $X$. 
Let $\nu(S)$ be the tubular neighborhood of $S$. Define $E(S)=\overline{X\setminus \nu(S)}$ which is called the exterior of $S$. 
Fix a base point $*\in E(S)$. We denote the fundamental group $\pi_1(E(S))$ by $G(S)$ and call it the \textit{link group} or \textit{knot group}.

Let $N=(D,\xi)$ be a pair consisting of a meridian disk $D$ of $S$ and a path $\xi$ in $E(S)$ starting at a point on the boundary $\partial D$ and terminating at the basepoint $x_0$.
Such a pair is called a \textit{noose}. We denote the homotopy class of $N$ by $[N]=[(D,\xi)]$, and let $Q(S)$ denote the set of all such homotopy classes. 

We define an action of $G(S)$ on $Q(S)$. Let $x\in Q(S)$ be represented by the path $\xi$, and let $g\in G(S)$ be represented by a loop $\gamma$ based at $*$. We define
$$x\cdot g := [\xi \cdot \gamma]$$
where the right-hand $\cdot$ denotes the concatenation of paths. 

The set $Q(S)$ may equipped with a structure of an augmented quandle by using the action of $G(S)$.

\begin{defn}
	The \textit{fundamental augmented quandle} of a surface link $S$ is an augmented quandle $(Q(S),G(S))$. Let $x,y$ in $Q(S)$ be represented by the $N_x=(D_x,\xi)$ and $N_y=
    (D_y,\eta)$, respectively. The augmentation map $\partial:Q(S)\to G(S)$ is defined by the product of paths
	$$\partial(x) := \bar{\xi} \, m_x \, \xi$$
    where $m_x = \partial D_x$ is the boundary loop of the meridian disk $D_x$. This augmentation induces a quandle operation on $Q(S)$: the operation $x^y$ is defined as the action of $\partial(y)$ on $x$, given explicitly by
    $$x^y = [(D_x, \xi \ \bar{\eta}\  m_y \ \eta)].$$
\end{defn}

\textbf{Notation Convention:} Since the operator group is canonically the knot group $G(S)=\pi_1(E(S))$, we will henceforth denote the fundamental augmented quandle simply by $Q(S)$, with the understanding that it carries this specific augmented structure.


\section{Wirtinger type presentation of surface knot quandles}\label{sec:diagrammatic}

\subsection{Banded unlink diagrams of surface links in $4$-manifolds}

We first recall the definition of the banded unlink diagram, following \cite{hughes2020isotopies}. 

Let $S$ be a surface knot in a $4$-manifold $X$. Let $h:X\to [0,4]$ be the self-indexing Morse function of $X$. Then we can construct a Kirby diagram $\mathcal{K}=L_1\cup L_2$ associated with the Morse function $h$, where $L_1$ is a dotted unlink associated with the $1$-handle attachment, and $L_2$ is an unlink with integer framing, associated with the $2$-handle attachment. Let $E(\mathcal{K})=S^3\setminus (\nu(L_1)\cup \nu(L_2))$ be the exterior of the Kirby diagram. 

Up to isotopy, the surface $S$ admits the following movie description. At each height $t\in[0,4]$, let $X_t=h^{-1}(t)$ denote the level set.

\begin{itemize}
    \item At $t=\frac{1}{2}$, $S\cap X_{1/2}$ consists of a collection of disjoint embedded disks, corresponding to the index-$0$ critical points of $S$. For every $t\in(\frac{1}{2},\frac{3}{2})$, the intersection $S\cap X_t$ is an unlink in $X_t$, which we denote by $L$.
    
    \item At $t=1$, we encounter the index-$1$ critical points of the ambient $4$-manifold $X$.
    
    \item At $t=\frac{3}{2}$, the $1$-handles of $S$ appear, and $S\cap X_{3/2}$ consists of the unlink $L$ together with a collection of disjoint bands attached to $L$. We denote this collection of bands by $v$. Resolving these bands produces a new unlink $L_v$, which describes $S\cap X_t$ for $t\in(\frac{3}{2},\frac{5}{2})$.
    
    \item At $t=2$, we encounter the index-$2$ critical points of $X$.
    
    \item At $t=\frac{5}{2}$, $S\cap X_{5/2}$ consists of a collection of disjoint embedded disks, corresponding to the index-$2$ critical points of $S$.
\end{itemize}

A surface link $S$ in $X$ admitting such a description is said to be in \textit{banded unlink position}. Let $S$ in $X$ be a surface link in banded unlink position. A \textit{banded unlink diagram} of $(X,S)$ is a triple $\mathcal{D}=(\mathcal{K},L,v)$, where 
\begin{itemize}
    \item $\mathcal{K}$ is a Kirby diagram of $X$,
    \item $L$ is an unlink in $E(\mathcal{K})$, 
    \item $v$ is a finite collection of bands attached to $L$ in $E(\mathcal{K})$,
\end{itemize}
 such that $L$ bounds a collection of disjoint embedded disks in $X_{1/2}$, and the resolved link $L_v$ bounds a collection of disjoint embedded disks in $X_{5/2}$. 

\begin{remark}
    We emphasize that the term ``unlink'' in this context refers to the topological property that the link bounds a collection of disjoint embedded disks in the 4-manifold (specifically in $X_{5/2}$), rather than its geometric appearance in the diagram.
\end{remark}

\begin{thm}[\cite{hughes2020isotopies}]
    Let $X$ be a $4$-manifold with Kirby diagram $\mathcal{K}$. Suppose that $S$ and $S'$ are embedded surfaces in $X$, with banded unlink diagrams $\mathcal{D}=(\mathcal{K}, L, v)$ and $\mathcal{D}'=(\mathcal{K}', L', v')$, respectively. Then $S$ and $S'$ are isotopic if and only if $\mathcal{D}$ and $\mathcal{D}'$ are related by a finite sequence of band moves, as described in Figure~\ref{fig:band_move}.
\end{thm}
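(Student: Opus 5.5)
This is the Hughes--Kim--Miller theorem, and the natural route is to imitate the proof of the classical Roseman/Swenton--Kearton--Kurlin theorem for banded unlink diagrams in $S^4$, carried out relative to a Kirby diagram.

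\emph{Easy direction.} Each band move of Figure~\ref{fig:band_move} is realized by an explicit isotopy of $S$ in $X$, so diagrams related by band moves give isotopic surfaces. There are three kinds of moves.
\begin{enumerate}[(1)]
\item Isotopies of $L\cup v$ in $E(\mathcal{K})$, together with band slides and band swims over $2$-handle attaching curves and dotted circles. These correspond to pushing part of the surface across a handle of $X$ at the appropriate level.
\item Cup, cap and band-pass moves. These change only the movie by births, deaths and reorderings of saddles.
\item Handle slides and cancellations in $\mathcal{K}$, which change the Kirby diagram. These come from an isotopy of the Morse function $h$ carrying $S$ along.
\end{enumerate}
For each move one writes down the corresponding ambient isotopy locally.

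\emph{Hard direction.} Given an isotopy from $S$ to $S'$, the plan has three steps.
\begin{enumerate}[(1)]
\item By Cerf theory, connect the Morse functions $h$ and $h'$ by a generic path $h_s$. Along this path the Kirby diagrams change by handle slides, births and deaths, and isotopies, and each such change is already accounted for as a diagram move. This reduces to the case of a fixed $h$, and hence a fixed $\mathcal{K}$.
\item With $h$ fixed, regard the isotopy $S_s$ as a one-parameter family of surfaces. Perturb it so that $h|_{S_s}$ is a generic path of functions on the surface, and record the codimension-one events.
\begin{itemize}
\item Births and deaths of pairs of critical points on the surface give cup and cap moves.
\item Critical points of the surface passing one another in height give band-pass and band slide moves, as in Swenton's analysis in $S^4$.
\item Critical points of the surface passing through the levels $t=1,2$ of the critical points of $X$ give the slides and swims over handles of $\mathcal{K}$.
\end{itemize}
\item Throughout, keep the surface in banded unlink position: push all index-$0$ points of $S$ below level $1$, all index-$2$ points above level $2$, and all saddles to level $3/2$. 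The points where this normalization must be redone contribute further moves. This step needs the standard fact that $1$-handles of the surface can be isotoped to the middle level.
\end{enumerate}

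\emph{Expected main obstacle.} The critical-level bookkeeping in step~(2) is the crux. The minima and maxima of the surface must be made to avoid the $1$- and $2$-handles of $X$. Concretely, the disks bounded by $L$ in $X_{1/2}$ and by $L_v$ in $X_{5/2}$ are only well defined up to isotopy in $X_{1/2}$ and $X_{5/2}$. One must show that changing the choice of these disks alters the diagram only by slides over $\mathcal{K}$, which is where the swim moves enter.

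My first approach would be to use that any two systems of disks bounded by the unlink in $X_{5/2}$ (respectively $X_{1/2}$) are isotopic rel boundary after slides. This follows from the Laudenbach--Poénaru-type uniqueness for the $3$- and $4$-handles, and symmetrically for the $0$- and $1$-handles.
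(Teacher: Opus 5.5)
The paper gives no proof of this statement; it is quoted from \cite{hughes2020isotopies}, so your outline has to stand on its own. Its overall architecture is the natural one: Cerf theory for $h|_{S_s}$ as in Swenton and Kearton--Kurlin, plus an analysis of how $S_s$ meets the handles of $X$. But there is a genuine gap exactly at the step you defer, and the mechanism you propose for the handle moves is not the right one.

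Making the path $h|_{S_s}$ generic controls only births, deaths and orderings of the surface's critical points. It says nothing about where $S_s$ sits relative to the handles of $X$. In particular, a critical point of $h|_{S_s}$ crossing the level $t=1$ or $t=2$ is invisible in the diagram.

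The diagram of $S_s$ comes from pushing $S_s$ along a gradient-like flow of $h$ into banded unlink position. This push changes discontinuously precisely when $S_s$ is in non-generic position with respect to the stable and unstable manifolds of the critical points of $h$, i.e.\ the cores and cocores of the handles of $X$. For instance:
\begin{itemize}
\item $S_s$ generically meets the $1$-dimensional cores of the $1$-handles and cocores of the $3$-handles only at isolated times.
\item The trace $\bigcup_s S_s\times\{s\}\subset X\times I$ meets (cocore of a $2$-handle)$\times I$ in a $1$-manifold. Its turning points, where a finger of $S_s$ is pushed across a cocore, are what produce the slides of bands and unlink components over $2$-handles.
\end{itemize}

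What is missing is a parametrized normal form. Away from finitely many $s$, the push into banded unlink position must be done continuously in $s$, so that the diagram changes only by isotopy of $L\cup v$ in $E(\mathcal{K})$ and by the Swenton--Kearton--Kurlin events in a collar of the middle level. Each exceptional time must then be identified with a band move. Your sentence that the places where the normalization is redone ``contribute further moves'' restates the theorem rather than proving it.

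Your treatment of the ``main obstacle'' is also misdirected. What is needed is that trivial disk systems in $\natural^k S^1\times B^3$ (the union of the $0$- and $1$-handles, resp.\ of the $3$- and $4$-handles) are determined up to isotopy rel boundary by their boundary unlink. This is due to Livingston for $B^4$, with the extension used by Meier--Zupan for bridge trisections. It is what makes $S$ depend only on $(\mathcal{K},L,v)$, and it is the natural tool for the isolated passages of $S_s$ through the $1$-dimensional cores and cocores. It involves no slides and no change of the diagram.

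Laudenbach--Po\'enaru concerns extending diffeomorphisms of $\#^k S^1\times S^2$ over $\natural^k S^1\times B^3$; it is not this statement. Band swims do not come from disk choices either. They arise when saddles at different heights are collapsed to one level, since one band may move through the region of another band that has already been resolved.

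Finally, the cited theorem is for a fixed Kirby diagram (the $\mathcal{K}'$ in the statement should be $\mathcal{K}$), so your Cerf step (1) over Morse functions of $X$ is not needed. As written it would itself require an argument, because handle slides and births/deaths along $h_s$ can run into the surface.
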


\begin{figure}
    \centering
    \includegraphics[width=0.9\linewidth]{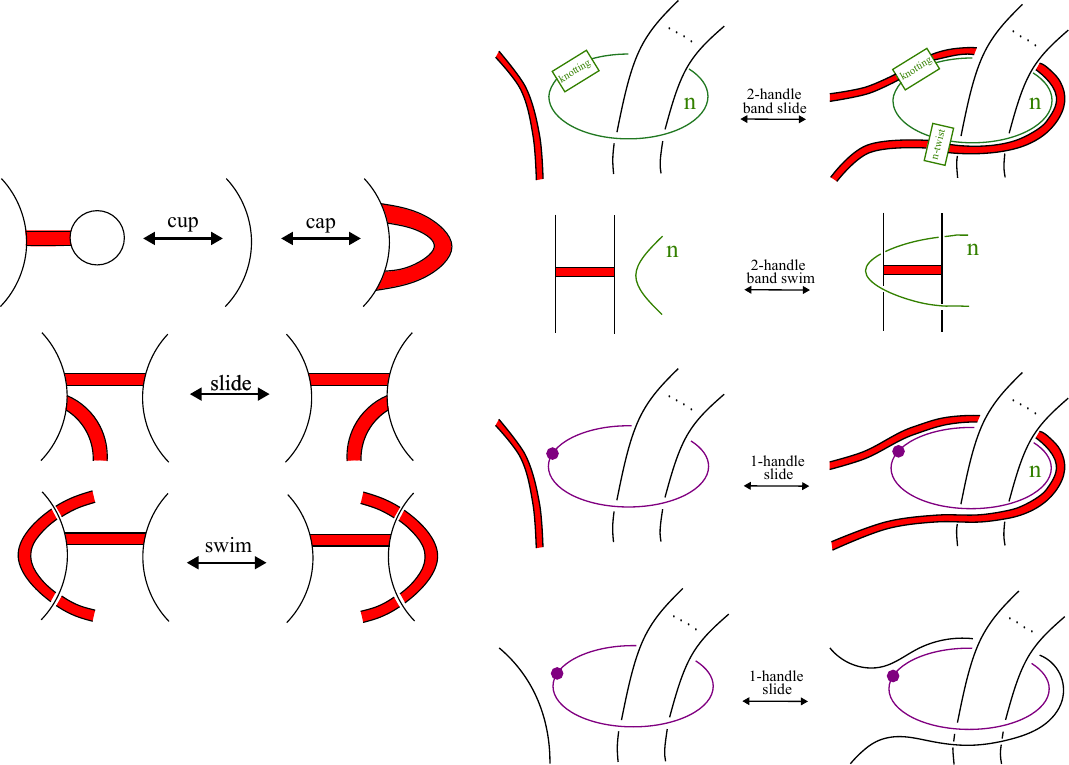}
    \caption{Band moves}
    \label{fig:band_move}
\end{figure}

We aim to compute the knot group $G(S)$ of a surface $S$ via a Kirby diagram of its exterior $E(S)$. Such a Kirby diagram $\mathcal{K}_S$ of the knot exterior $E(S)$ in $X$ can be constructed from a banded unlink diagram $\mathcal{D}$ using the method introduced in \cite{gompf19994}.

More generally, let $X^n$ be a smooth manifold equipped with a smooth function $t:X\to I=[0,1]$, and let $Y^m$ in $X$ be a properly embedded smooth submanifold such that $t|_Y:Y\to I$ is a Morse function. For $c\in I$, set $X_{\leq c}:=t^{-1}([0,c])\setminus \nu\bigl(Y\cap t^{-1}([0,c])\bigr)$, where $\nu(\cdot)$ denotes an open tubular neighborhood.

\begin{prop}[\cite{gompf19994}]
    Let $[a,d]$ with $a=0$ be an interval such that:
    \begin{enumerate}
     \item $t$ has no critical points in $t^{-1}((a,d))$,
     \item $t|_Y$ has a unique critical value $b\in(a,d)$, corresponding to a unique critical point $p\in Y$ of index $k$,
     \item $t^{-1}(a)$ and $t^{-1}(d)$ are regular level sets, and $Y$ meets these level sets transversely (equivalently, $t|_Y$ is regular at $a$ and $d$).
    \end{enumerate}
    Then $X_{\leq d}$ is obtained from $X_{\leq a}$ by attaching a single $(k+n-m-1)$-handle. In particular, $X_{\leq d} \cong X_{\leq a} \cup (k+n-m-1)\text{-handle}$. 
\end{prop}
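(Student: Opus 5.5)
The plan is to prove the proposition by local analysis near the critical point $p$, followed by an isotopy argument away from it. First, by conditions (1) and (3), $t$ has no critical points in $t^{-1}((a,d))$, so the gradient-like flow of $t$ identifies $t^{-1}([a,d])$ with a product $t^{-1}(a)\times[a,d]$. Away from a small neighborhood of $p$, the restriction $t|_Y$ is also regular, so $Y$ is transverse to all levels there. Hence, outside a small ball $B$ around $p$, the pair $(t^{-1}([a,d]),Y)$ is a product, and $X_{\le d}\setminus B$ deformation retracts (in fact, is diffeomorphic after collaring) onto $X_{\le a}$ together with the trivial collar over $t^{-1}(a)\setminus\nu(Y)$. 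So the problem reduces to understanding what happens inside $B$.

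Inside $B$, I would use local coordinates in which $t$ is a coordinate function and $Y$ is standard. Choose coordinates $(x,y,z)\in\mathbb{R}^k\times\mathbb{R}^{m-k}\times\mathbb{R}^{n-m}$ on $B$ so that $Y=\{z=0\}$ and
\[
t=b-|x|^2+|y|^2.
\]
This is possible because $t$ is a submersion on $X$ near $p$ and $t|_Y$ is Morse at $p$: first straighten $Y$, then apply the Morse lemma along $Y$ and extend to the normal directions. With this model, the relevant region of $B\setminus\nu(Y)$ is
\[
\{\,|x|^2\le \epsilon,\ |y|^2\le\epsilon,\ \delta\le|z|\le\epsilon\,\},
\]
which is $D^k\times D^{m-k}\times(S^{n-m-1}\times I)$. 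The passage from below to above the level $b$ is then, up to collars, an attachment along the part of the boundary lying in $t^{-1}(a)$.

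The key step is to identify this attachment as a $(k+n-m-1)$-handle, and this is the step I expect to be the main obstacle. The idea is the standard one for handles in complements. The core $D^k$ descending from $p$ in $Y$ gets thickened by the normal sphere $S^{n-m-1}$. Equivalently, the new piece is $D^k\times D^{n-m}$ cut along the missing $\nu(Y)$, which is $D^k\times(D^{n-m}\setminus \mathrm{int}\,D^{n-m}_\delta)$. I would cap this off using a cancellation observation. Adding the piece $D^k\times D^{n-m}_\delta$ back would correspond to filling in $\nu(Y)$. Viewing $D^k\times S^{n-m-1}$ as $D^k\times(D^{n-m-1}_+\cup D^{n-m-1}_-)$, the piece $D^k\times D^{n-m-1}_-\times I$ is a collar on the part of the boundary already present in $X_{\le a}$, namely the boundary of $\nu(Y\cap t^{-1}[0,a])$ near the descending sphere $\partial D^k\subset Y_a$. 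The remaining piece $D^k\times D^{n-m-1}_+\times I$ is then a handle with core $D^k\times D^{n-m-1}_+$, of dimension $k+n-m-1$. Its attaching region is the portion of the boundary lying in $\partial X_{\le a}$, namely $\partial(D^k\times D_+^{n-m-1})\times D^{m-k+1}$, which has the correct form $S^{k+n-m-2}\times D^{m-k+1}$. Indeed, $(k+n-m-1)+(m-k+1)=n$, as required.

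Finally, I would check that the gluing is along an embedding into $\partial X_{\le a}$ and that corners can be smoothed. Combining the product structure outside $B$ with this local handle then gives $X_{\le d}\cong X_{\le a}\cup h^{k+n-m-1}$.

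As a sanity check, take $n=4$ and $m=2$. A minimum of $Y$ ($k=0$) gives a $1$-handle, matching the dotted circles for the disks of $L$. A saddle ($k=1$) gives a $2$-handle, which I expect to correspond to the bands. A maximum ($k=2$) gives a $3$-handle.
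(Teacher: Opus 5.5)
\emph{Note.} The paper does not prove this proposition; it quotes it from Gompf--Stipsicz. Your outline has the shape of the standard argument: a product structure away from $p$, and the normal sphere of $Y$ split into one hemisphere that contributes only a collar and one that carries the core $D^k\times D^{n-m-1}_+$. However, the step you flag as the main obstacle is not supported, because your local model is wrong.

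Since $p$ is critical for $t|_Y$ but not for $t$, $dt_p$ vanishes on $T_pY$ and is nonzero on the normal space. So near $p$ you cannot have $t=b-|x|^2+|y|^2$. That function is critical along the whole normal disk $\{0\}\times\{0\}\times D^{n-m}$, and extending the Morse form of $t|_Y$ constantly in the normal directions is exactly what hypothesis (1) rules out.

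Taken literally, your model even gives the wrong answer. Its local sublevel sets are $\{-|x|^2+|y|^2\le c-b\}\times\{\delta\le|z|\le\epsilon\}$. Crossing $b$ therefore attaches $D^k\times D^{m-k}\times S^{n-m-1}\times I$ along $\partial D^k\times D^{m-k}\times S^{n-m-1}\times I$, which is a $k$-handle \emph{and} a $(k+n-m-1)$-handle. The model is also symmetric in $z$, so nothing in it decides which hemisphere is $D_-$. Your stated reason for the collar, that $D^k\times D_-^{n-m-1}\times I$ is a collar on $\partial\nu(Y\cap t^{-1}[0,a])$ near $\partial D^k$, cannot be right either: a piece carrying the full factor $D^k$ is not a collar on a neighborhood of $\partial D^k$.

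The missing ingredient is the linear normal term.
\begin{itemize}
\item Write $t(x,y,z)-t(x,y,0)=\sum_i z_ig_i(x,y,z)$. Since $(g_i(p))=\partial_z t(p)\neq 0$, you may replace one normal coordinate by $\sum_i z_ig_i$. This gives $t=b-|x|^2+|y|^2+z_{n-m}$, still with $Y=\{z=0\}$.
\item Set $D_\pm=\{\pm z_{n-m}\ge 0\}$ on the normal sphere. The restriction of $t$ to $\partial\nu(Y)=\{|z|=\delta\}$ has exactly two critical points near $p$: the poles $(0,0,\mp\delta e_{n-m})$, of indices $k$ and $k+n-m-1$ respectively.
\item At the lower pole, $\nabla t=e_{n-m}$ points into $\nu(Y)$, i.e.\ out of the exterior. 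By Morse theory for manifolds with boundary, nothing changes there. Concretely, downward flow lines starting over $D^k\times D_-$ move away from $Y$ and land in the part of $t^{-1}(a)\setminus\nu(Y_a)$ below the core, inside the descending sphere. So this piece is a collar on that region of $\partial X_{\le a}$, not on the tube around $Y_{\le a}$.
\item At the upper pole, $\nabla t$ points into the exterior, and downward flow lines starting over $D_+$ run into $\nu(Y)$ before reaching level $a$. Here one attaches a handle whose core is the descending disk $D^k\times D_+$, of index $k+n-m-1$.
\end{itemize}
With this correction the rest of your outline goes through: the attaching region $\partial(D^k\times D_+^{n-m-1})\times D^{m-k}\times I$ (up to collars), and your sanity check for $n=4$, $m=2$.
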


Let $S$ be a surface embedded in a $4$-manifold $X$, and let $\mathcal{D}$ be a banded unlink diagram of $(X,S)$. Each $0$-handle of $S$ gives rise to a $1$-handle in the handle decomposition of the exterior $E(S)$. In the diagram, this $1$-handle is represented by a dotted circle, which is obtained by replacing the corresponding unlink component in $\mathcal{D}$.
Each $1$-handle of $S$ corresponds to a $2$-handle in the handle decomposition of the exterior $E(S)$. 
We assume that every $1$-handle (band) of $S$ has the blackboard framing. To visualize the construction of these $2$-handles, we consider the removal of a neighborhood of each band from the intermediate manifold. This operation effectively drills tunnels in the boundary. Consequently, each $2$-handle $h$ of $E(S)$ is attached along a curve that runs parallel to the core of the corresponding band (conceptually, consisting of two copies of the core, one running in front of the band and the other behind it, connected at the ends). Since we assumed the blackboard framing, the framing of this $2$-handle is $0$.

The framing of each such $2$-handle is $0$, which can be verified by drawing a parallel copy of the core of $h$. We add a $3$-handle for each $2$-handle $h$, and then add a $4$-handle to finish the picture. For an example, see Figure~\ref{fig:BUD->Kirby}. 

\begin{figure}[H]
    \centering
    \includegraphics[width=0.5\linewidth]{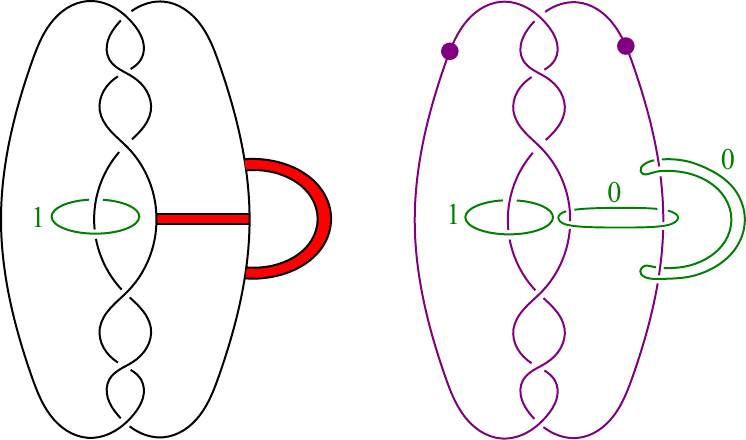}
    \caption{The left-hand figure depicts a banded unlink diagram of the connected sum of the spun trefoil and $\mathbb{C}P^1$ in $\mathbb{C}P^2$. By converting the unlink components into dotted circles and the bands into framed links, we obtain a Kirby diagram for the exterior of this surface in $\mathbb{C}P^2$, as illustrated in the right-hand figure.}
    \label{fig:BUD->Kirby}
\end{figure}

\subsection{A Wirtinger type presentation of the surface knot group}

For a surface link $S$ in a $4$-manifold $X$, the surface knot group $G(S) := \pi_1(E(S))$ is the fundamental group of the exterior $E(S) = X \setminus \text{int}(N(S))$. To compute this group, we can view the exterior $E(S)$ itself as a compact $4$-manifold with boundary. Its handle decomposition (and thus its Kirby diagram) can be derived from the diagram of $(X, S)$ by treating the meridians of the surface $S$ as new $1$-handles. We first state the general proposition for determining the fundamental group of any connected $4$-manifold $X$ represented by a Kirby diagram $\mathcal{K}$. We will then apply this result to our specific case of the surface exterior.

\begin{figure}[H]
    \centering
    \includegraphics[width=0.3\linewidth]{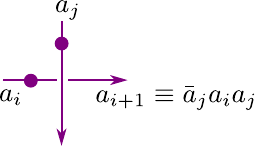}
    \caption{1-handle crossing relation}
    \label{fig:Kirby_diag_relation}
\end{figure}

\begin{prop}\label{prop:Kirby_fnm_group}
    Let $\mathcal{K}$ be a Kirby diagram representing a connected $4$-manifold $X$. 
    Let $a_1, \dots, a_n$ denote the set of generators corresponding to all $1$-handles 
    of $X$ (represented by dotted circles). 
    For each component of the $2$-handle attaching link, let $w_j$ be the word in the 
    free group generated by $\{a_1, \dots, a_n\}$, obtained by reading the sequence of 
    over-arcs along the attaching circle. 
    Then the fundamental group $\pi_1(X)$ admits a presentation:
    \[
        \pi_1(X) = \langle a_1, \dots, a_n \mid r_1, \dots, r_k, \; w_1, \dots, w_m \rangle,
    \]
    where $r_1, \dots, r_k$ are the relations encoded by the crossings of the dotted 
    link itself, as illustrated in Figure~\ref{fig:Kirby_diag_relation}.
\end{prop}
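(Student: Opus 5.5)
We will prove Proposition~\ref{prop:Kirby_fnm_group} by a van Kampen argument run along the handle decomposition encoded by $\mathcal{K}$, with the $1$-handles described in dotted-circle notation. Only handles of index at most $2$ matter: attaching a $3$- or $4$-handle glues along a simply connected sphere $S^2$ or $S^3$, so $\pi_1$ does not change. Hence $\pi_1(X)=\pi_1(X^{(2)})$, where $X^{(2)}$ is the union of the $0$-, $1$- and $2$-handles.

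\textbf{Step 1: the $1$-handlebody.} Let $L_1$ be the dotted link. The union $X^{(1)}$ of the $0$-handle and the $1$-handles is obtained from $B^4$ by pushing the trivial disks bounded by $L_1$ into the interior and deleting their neighbourhoods. It is a deformation retract of $B^4\setminus\nu(\text{disks})$, which in turn retracts onto $S^3\setminus \nu(L_1)$ modulo relations coming from the disks. We then read $\pi_1(X^{(1)})$ off a Wirtinger-type presentation:
\begin{itemize}
    \item there is one generator for each arc of the dotted link diagram;
    \item each crossing of dotted components gives a conjugation relation, as in Figure~\ref{fig:Kirby_diag_relation};
    \item collapsing the bounding disks kills the longitudes.
\end{itemize}
Arcs of the same component then become conjugate meridians. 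After choosing one representative $a_i$ per dotted circle, these crossing relations are exactly $r_1,\dots,r_k$. For a genuinely unlinked diagram they become trivial and we recover the free group $F(a_1,\dots,a_n)$, as expected for $\natural^n S^1\times B^3$. I would state the general case as ``the group generated by the meridians subject to the crossing relations,'' which is what the proposition records.

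\textbf{Step 2: the $2$-handles.} Each $2$-handle is attached along a framed knot $K_j\subset\partial X^{(1)}$, and $B^2\times B^2$ is glued along $S^1\times B^2$. By van Kampen, this adds exactly the relation $[K_j]=1$. Choose a basepoint above the diagram plane and connect it vertically to $K_j$. Homotope $K_j$ in the complement of the dotted disks, and record a generator $a_i^{\pm1}$ each time $K_j$ passes under (equivalently, links through) a dotted circle, with the sign fixed by orientation. The product of these letters is the word $w_j$, and $[K_j]=w_j$ in $\pi_1(X^{(1)})$. Crossings of $K_j$ with other $2$-handle curves do not contribute, since those curves are not yet removed at this stage; their handles are attached afterwards and only add relations.

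\textbf{Main obstacle.} The delicate point is Step 1. We must justify that the meridian classes of the dotted circles generate $\pi_1(X^{(1)})$, and that the only relations are the crossing ones. I would handle this by identifying $X^{(1)}$ with the complement of properly embedded, boundary-parallel disks in $B^4$. Pushing the disks slightly into $B^4$ shows that its $\pi_1$ is the quotient of $\pi_1(S^3\setminus L_1)$ by the longitudes. The Wirtinger presentation of the link complement then yields exactly the stated generators and crossing relations, and combining this with Step 2 gives the claimed presentation.
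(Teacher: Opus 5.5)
Your proposal is correct and follows essentially the same route as the paper. Both arguments apply van Kampen along the handle decomposition, compute $\pi_1$ of the $1$-handlebody by a Wirtinger presentation of the dotted link, add one relation $w_j$ per $2$-handle, and note that $3$- and $4$-handles contribute nothing. You are more careful than the paper in describing $X^{(1)}$ as $B^4$ minus pushed-in trivial disks (the paper just calls it the complement of $L_1$ in $S^3$), but drop the claim that it ``retracts onto'' $S^3\setminus\nu(L_1)$, which is not true in general. What you need is that the inclusion $S^3\setminus\nu(L_1)\hookrightarrow X^{(1)}$ is a $\pi_1$-isomorphism, which holds because both groups are free on the meridians of the unlink $L_1$; in particular the longitude relations you add are already trivial and can be omitted, so you recover exactly the stated presentation.
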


\begin{proof}
    The proof follows from the Seifert--van Kampen theorem applied to the handle decomposition of the $4$-manifold $X$.
    
    Let $X_1$ be the union of the $0$-handle and the $1$-handles. Topologically, $X_1$ is homotopy equivalent to the wedge sum of circles, $\bigvee^n S^1$.
    In the context of Kirby diagrams, $X_1$ can be viewed as the complement of the dotted link $L_1$ in the $3$-sphere (which is the boundary of the $0$-handle).
    The fundamental group $\pi_1(X_1)$ is thus the link group of $L_1$. 
    The generators $a_1, \dots, a_n$ correspond to the meridians of the components of $L_1$ (which geometrically represent the cores of the $1$-handles). 
    The relations $r_1, \dots, r_k$ arise from the Wirtinger presentation associated with the crossings of the dotted link diagram \cite{rolfsen1990knots}. Thus, we have the presentation:
    \[
        \pi_1(X_1) = \langle a_1, \dots, a_n \mid r_1, \dots, r_k \rangle.
    \]

    Next, let $X_2$ be the manifold obtained by attaching $2$-handles to $X_1$. Each $2$-handle is attached along a framed knot $L_{2,j}$ in $\partial X_1$. Attaching the $2$-handles introduces relations $w_1, \dots, w_m$ via the Seifert--van Kampen theorem, and the subsequent attachment of $3$- and $4$-handles does not affect the fundamental 
    group. Hence, $\pi_1(X) \cong \pi_1(X_2)$, concluding the proof.
\end{proof}

\begin{example}
    Let $C_d$ denote the degree-$d$ algebraic curve, which is the smooth isotopy class of the zero-set of the homogeneous polynomial $z_0^d+z_1^d+z_2^d$ in $\CP^2$. We can draw a banded unlink diagram of $C_d$ by using the method described in \cite{gompf19994}.

    Consider an $(m,n)$-torus link $t_{m,n}$, which is an oriented link in $S^3$ consisting $\gcd(m,n)$ components, and lies on the boundary $T^2$ of a tubular neighborhood of an unknot, representing the class $m\mu+n\lambda\in H_1(T^2)$. Let $F_{m,n}$ be the Seifert surface of $t_{m,n}$. The surface $F_{m,n}$ is a oriented surface constructed by connecting $n$ disks with $m(n-1)$ bands. 
    We could obtain a singular degree-$d$ curve in $\CP^2$ as union of $d$ generic complex lines. By applying a perturbation we could resolve the transverse double point by removing a pair of intersecting disks near each singular point, and replace them by an annulus. We observe that this annulus is the Seifert surface $F_{2,2}$ of $t_{2,2}$, lying in $\partial D^4$. 

    Now we draw a degree-$d$ curve in $\CP^2$. Let $F^*$ be a singular curve consisting of $d$ complex lines. The curve $F^*$ intersects the boundary of the $2$-handle of $\CP^2$ in a $(d,d)$-torus link $T_{d,d}$. Consider the level set of a Morse function on $\CP^2$. As the level decreases from the $2$-handle into $I\times D^3$, the link $t_{d,d}$ separates into an unlink bounding $d$ disks. Each transverse double point of the singular curve $F^*$, corresponding to the intersection of a pair of complex lines, gives rise to a positive crossing between two components of the link. Resolving these double points corresponds to attaching bands between the disks, producing a smooth surface $F$. Since the resolutions can be arranged to occur at the same level, the resulting surface is a Seifert surface for $t_{d,d}$. The remaining part of $F$ lies in the $2$-handle and consists of $d$ parallel copies of its core. An example for $d=3$ is shown in Figure~\ref{fig:cpx_curve}.

    Label the generators corresponding to the dotted link by $x_1, \dots, x_d$. Each $2$-handle associated with a band induces the relation $x_i x_{i+1}^{-1}=1$, which implies $x_i = x_{i+1}$ for all $i$. Let us denote this common generator by $x$. The $(+1)$-framed $2$-handle imposes the relation $x_1 x_2 \dots x_d = 1$, which simplifies to $x^d = 1$. Thus, the fundamental group of $E(C_d)$ admits the presentation
    \[
        \pi_1(E(C_d)) \cong \langle x \mid x^d = 1 \rangle,
    \]
    which is isomorphic to the cyclic group $\mathbb{Z}_d$.
\end{example}

    \begin{figure}[H]
    \centering
    \includegraphics[width=0.8\linewidth]{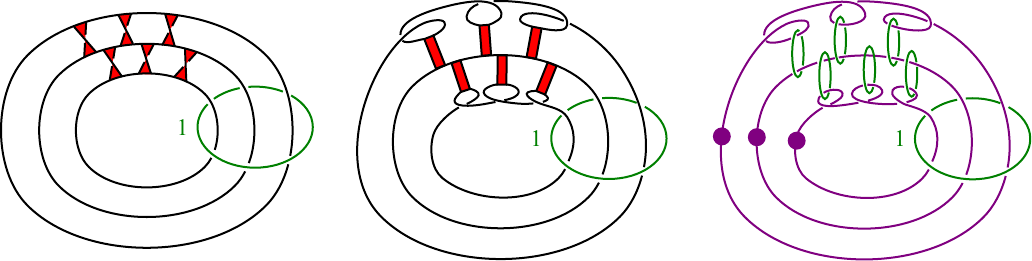}
    \caption{Left: A banded unlink diagram of the degree-$3$ algebraic curve $C_3$. 
    Middle: The diagram with bands arranged in the blackboard framing. 
    Right: The associated Kirby diagram of the exterior $E(C_3)$, where the framing of each $2$-handle is $0$.}
    \label{fig:cpx_curve}
	\end{figure}


\subsection{A Wirtinger type presentation of the surface knot quandle}

For classical knot in $3$-manifold, the fundamental quandle can be represented by the knot diagram, for the precise presentation, see~\cite{fenn1992racks}. 

\begin{thm}[\cite{fenn1992racks}]\label{thm:3-mfd_quandle}
    Let $l$ be a link embedded in a closed orientable $3$-manifold $M$. The fundamental augmented quandle $Q(l)$ admits a presentation derived from any diagram of $l$, where the generators correspond precisely to the arcs of the diagram, and the relations correspond to the Wirtinger crossing relations.
\end{thm}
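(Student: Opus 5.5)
The plan follows the Fenn--Rourke argument, organised so that the group-level statement (Wirtinger presentation of $\pi_1$ of the link exterior) can be upgraded to the augmented quandle. Fix a diagram of $l \subset M$, with $M$ given by surgery on a framed link $\mathcal{L}\subset S^3$ (or a Heegaard/Kirby-type description). The base point sits high above the projection plane. For each arc $a$ of the diagram of $l$ we take the noose $N_a=(D_a,\xi_a)$: $D_a$ is a small meridian disk of $l$ at the arc, and $\xi_a$ is the straight vertical path from $\partial D_a$ up to the base point. This gives a map from the free augmented quandle on the arcs, with operator generators the meridians of the surgery components, to $Q(l)$. At a crossing with over-arc $b$ and under-arcs $a,c$, pushing $\xi_c$ across the over-strand shows $N_c\simeq N_a\cdot \partial(N_b)^{\pm1}$, i.e.\ $c=a^{b}$ or $a^{\bar b}$. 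So the Wirtinger relations hold, and the map descends to the presented quandle $Q(\mathcal D)$. The operator relations are the surgery relations of Proposition~\ref{prop:Kirby_fnm_group}, which hold in $G(l)$.

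Surjectivity: any noose $(D,\xi)$ can first be slid along $l$ so that $D$ sits at a chosen arc, changing $\xi$ only by a path. The new path is then a vertical path followed by a loop at the base point. Hence every element of $Q(l)$ has the form $a^g$ with $g\in G(l)$. Since $G(l)$ is generated by the images $\partial(a)$ of the arcs together with the operator generators, $a^g$ lies in the image.

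Injectivity is the main obstacle. Here I would use the standard fact that $Q(l)$ is determined by the pair $(G(l), \text{peripheral data})$. Concretely, orbits of the $G(l)$-action on $Q(l)$ correspond to the components of $l$. For a noose $x$ at component $K$, the stabiliser of $x$ is the image of the peripheral subgroup $\pi_1(\partial\nu K)$, generated by the meridian $\partial(x)$ and a longitude $\lambda_K$. So, as an augmented quandle, $Q(l)\cong\bigsqcup_K P_K\backslash G(l)$, and it suffices to show that the presented quandle has the same orbit/stabiliser description. For this I would do three things:
\begin{enumerate}[(i)]
    \item Use the Wirtinger presentation of $G(l)$ (the group analogue of Proposition~\ref{prop:Kirby_fnm_group}) to identify the operator group of the presented augmented quandle with $G(l)$.
    \item Show that in the presented quandle every element is $a^g$ for a fixed arc representative $a$ of its component. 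Each other arc on the same component equals $a^{w}$, where $w$ is the word read off by following the component.
    \item Show that the stabiliser of $a$ in the presented quandle contains the meridian, by idempotency, and the longitude. The longitude is exactly the product of the crossing operators read around the component, so $a^{\lambda_K}=a$ follows by chaining the crossing relations around the component. It is therefore contained in $P_K$, and conversely surjects onto it.
\end{enumerate}
These steps give a bijection of orbit spaces compatible with $\partial$, which proves the isomorphism. The delicate point I expect is step (iii): ensuring that the stabiliser in the presented quandle is no larger than the image of $P_K$, i.e.\ that the presentation does not force extra identifications. I would handle this by constructing the inverse map explicitly, sending the coset $P_K g$ to $a_K^g$, and checking well-definedness using exactly the meridian and longitude computations.
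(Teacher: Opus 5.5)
The paper does not prove this statement; it is quoted from Fenn--Rourke, so there is no in-paper argument to compare against. Your argument is correct, and it differs from the method the paper uses for its own $4$-dimensional analogue, Theorem~\ref{thm:qdle_presentation}.

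\textbf{How the paper handles its analogue.} The inverse $\lambda$ is built by writing a noose as $[N_0]^{[\alpha]}$ and reading $\alpha$ as a word. Well-definedness is then argued by decomposing an arbitrary homotopy of nooses into local moves in a level set and passages across $2$-handle cores.

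\textbf{How your route differs.} You use the coset description $Q(l)\cong\bigsqcup_K P_K\backslash G(l)$ and push all of injectivity into group theory. The key fact is that the action of $F(S_P\cup S_O)$ on the presented quandle factors through $G(l)$. This follows from your step (i) together with the definition of the congruence, which makes $\partial R_P$ and $R_O$ act trivially. Once that is known, the inverse $P_K g\mapsto a_K^{g}$ is well defined as soon as the meridian and one longitude fix $a_K$. Then $\Psi\circ\Phi=\mathrm{id}$ follows from step (ii).

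\textbf{What each approach buys.} Your route never has to analyse homotopies of nooses. It also shows exactly why the crossing relations suffice: chained around a component, they produce the longitude that stabilizes $a_K$. In exchange, it needs an explicit generating set for the peripheral stabilizer. In dimension $3$ that is just a meridian and a longitude. In the surface setting of the paper it would be the image of $\pi_1(\partial\nu S_i)$, so one would have to check that lifts of loops on $S_i$ fix the base arc, which is presumably where the band relations would enter.

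\textbf{Two small corrections.}
\begin{enumerate}[(1)]
\item The point you flag as delicate in (iii) is automatic. Since $\Phi$ is $G(l)$-equivariant, $a_K^{g}=a_K$ in the presented quandle gives $P_K g=\Phi(a_K^{g})=\Phi(a_K)=P_K$, hence $\mathrm{Stab}(a_K)\subseteq P_K$. Only the reverse inclusion needs the meridian and longitude computation.
\item The operator generators should be the arcs of the surgery link $\mathcal{L}$, not one meridian per component. The operator relations are then the crossing relations where $\mathcal{L}$ passes under, plus one framing-curve relation per surgery component. The group-level input is the van Kampen computation for Dehn filling the boundary tori of $\mathcal{L}$ in $S^3\setminus\nu(l\cup\mathcal{L})$. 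It is not Proposition~\ref{prop:Kirby_fnm_group}, which concerns $1$- and $2$-handles of a $4$-manifold.
\end{enumerate}
With these operator data included, your presentation is also the correct form of the statement. As written in the paper, the statement omits them, even though they are needed when $M\neq S^3$.
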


Let $\D=(\K, L, v)$ be a banded unlink diagram of $S$ in $X$. Give an orientation of the diagram, we define the diagrammatic quandle $Q(\D)$ as bellow. 

\begin{itemize}
	\item Let $\mathbf{x}=\{x_{1},\dots,x_{n}\}$ be the labels of arcs of $L$, serving as the primary generators. 
    
    \item Let $\mathbf{a}=\{a_{1},\dots,a_{m}\}$ be the labels of the arcs of the dotted link $L_1$, serving as the operator generators. The elements $a_{1},\dots,a_{m}$ have relations coming from $\pi_1(X)$. 

	\item At each crossing where the under arcs belong to $L$, label the under arcs by $x_i$ and $x_{i+1}$. 
    When the over arc is $x_j$ in $\mathbf{x}$ or $a_j$ in $\mathbf{a}$, such crossings give rise to primary relations of the form $x_{i+1}=x_i^{x_j}$ or $x_{i+1}=x_i^{a_j}$, respectively. 
    When the over-strand is a band in $v$ or an arc in $L_2$, it does not change the associated element. Denote the primary relations by $p_1,\dots p_t$. 

    \begin{figure}[H]
        \centering
        \includegraphics[width=0.9\linewidth]{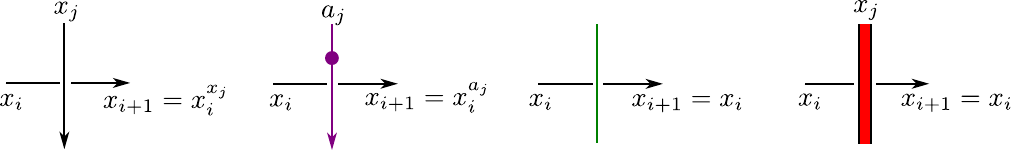}
        \caption{Primary relations}
        \label{fig:primary_relations}
    \end{figure}

    At each crossing where the under arcs belongs to $L_1$, label the under-strand by $a_i$ and $a_{i+1}$. When the over arc is $x_j$ in $\mathbf{x}$ or $a_j$ in $\mathbf{a}$, such crossings give rise to operator relations of the form $a_{i+1} \equiv \bar{x}_j a_i x_j$ or $a_{i+1} \equiv \bar{a}_j a_i a_j$, respectively. When the over-strand is a band in $v$ or an arc in $L_2$, it does not change the associated element, and the relation reduces to $a_{i+1} = a_i$. We denote the operator relations by $o_1, \dots, o_k$.

    \begin{figure}[H]
        \centering
        \includegraphics[width=0.9\linewidth]{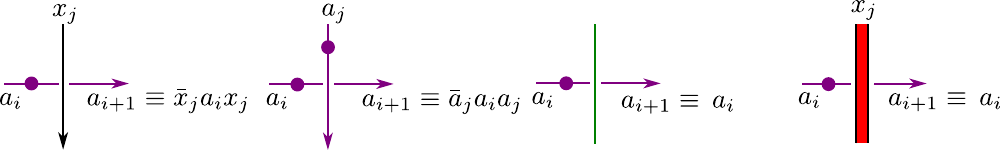}
        \caption{Operator relations.}
        \label{fig:oprator_relations}
    \end{figure}

    \item Additional primary relations are given by the bands $b_i$ in $v$. Each band $b_i$ gives rise to a primary relation $x_j^w=x_k$ between the two arcs $x_j$ and $x_k$ connected by $b_i$, where $w$ is a word obtained by reading the labels encountered along $b_i$ from $x_j$ to $x_k$. If $b_i$ does not pass under any arcs of $\mathbf{x}$ or $\mathbf{a}$, we declare that $w\equiv 1$. We denote these relations by $b_1,\dots,b_s$ as well.
    
    \begin{figure}[H]
        \centering
        \includegraphics[width=0.4\linewidth]{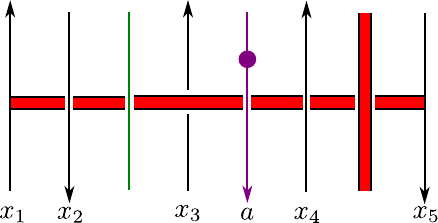}
        \caption{An example of a band relation. Reading the word along the band from $x_1$ to $x_5$ yields the primary relation $x_5 = x_1^{x_2 a \bar{x}_4}$.}.
        \label{fig:band_relation}
    \end{figure}

	\item Additional operator relations $s_1, \dots, s_l$ are obtained from the $2$-handle attaching link $L_2$. For each component of $L_2$, we read the word in $\mathbf{x} \cup \mathbf{a}$ formed by the arcs passing over that component, traversing it once in the positive direction. Setting this word equal to the identity yields an operator relation.
        \begin{figure}[H]
        \centering
        \includegraphics[width=0.22\linewidth]{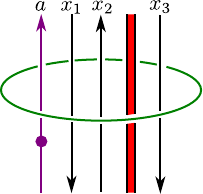}
        \caption{An example of a $2$-handle relation. Reading the arcs passing over the $2$-handle attaching link yields the operator relation $\bar x_3 x_2 \bar x_1 a \equiv 1$.}
        \label{fig:2-handle}
    \end{figure}
	\end{itemize}

\begin{defn}
The diagrammatic quandle $Q(\mathcal{D})$ associated with $\D$ is defined as a quotient of the extended free quandle $FQ(S_P, S_O)$. Specifically, $Q(\mathcal{D})$ is given by the presentation
\[
Q(\mathcal{D}) = [ S_P , S_O \mid R_P , R_O ] = S_P \times F(S_P \cup S_O)/ \sim,
\]
where the sets of generators and relations are defined as follows:
\begin{itemize}
    \item $S_P = \mathbf{x}$ is the set of primary generators;
    \item $S_O = \mathbf{a}$ is the set of operator generators;
    \item $R_P$ is the set of primary relations, consisting of the crossing relations $\{p_1, \dots, p_t\}$ and the band relations $\{b_1, \dots, b_s\}$;
    \item $R_O$ is the set of operator relations, consisting of the relations $\{o_1, \dots, o_k\}$ derived from the crossings, and the relations $\{s_1, \dots, s_l\}$ derived from the $2$-handle attaching link $L_2$.
\end{itemize}
\end{defn}

\begin{remark}\label{rmk:augmented_map}
    The geometric meaning of this presentation is justified by the augmentation map $\partial$. For any element of the form $x_i^a$, where $x_i$ in $\mathbf{x}$ and $a$ is a word in the free group $F(\mathbf{x} \cup \mathbf{a})$, applying the augmentation map yields the identity:
    \[
        \partial(x_i^a) = \partial(a)^{-1} \partial(x_i) \partial(a).
    \]
\end{remark}

We can derive the operator group from $Q(\mathcal{D})$ by applying the augmentation map $\partial$. 
The generator of this operator group is $\mathbf{x}\cup \mathbf{a}$. 
\begin{itemize}
    \item The set of primary generators $\mathbf{x}$ corresponds to the surface link components. Recall that the exterior $E(S)$ is obtained by removing the open tubular neighborhood of the surface $S$ from $X$. Topologically, removing the neighborhood of a surface $0$-handle is equivalent to attaching a $1$-handle to the complement. Consequently, each arc in $\mathbf{x}$ in the banded unlink diagram $\mathcal{D}$ is converted into a dotted circle in $\mathcal{K}$.  
    \item The set of operator generators $\mathbf{a}$ corresponds to the $1$-handles of the ambient four-manifold $X$, which are represented by dotted circles in the original diagram of $X$. 
\end{itemize}

Operator relations in $R_O$ remain unchanged under the augmentation map, and every primary relation $p_i$ and $b_i$ in $R_P$ induces a group relation by replacing quandle operations with conjugations. We denote the resulting set of group relations by $R'_P$, which consists of two types:
\begin{itemize}
    \item \textit{Primary crossing relations} $p'_i$: corresponding to the crossing relation $x_j = x_i^y$ in $Q(\mathcal{D})$, taking the form $x_j = \bar{y}x_iy$;
    \item \textit{Band relations} $b'_k$: corresponding to the band relation $x_j = x_i^w$, taking the form $x_j = \bar{w}x_iw$.
\end{itemize}

The following lemma verifies that the group presented by taking the augmentation is indeed the knot group of the surface link.

\begin{lem}\label{lem:knot_group_BUD}
    Let $S$ be a surface link in $X$ and $\mathcal{D}$ be a banded unlink diagram of $S$. 
    Let $G(\mathcal{D})$ be the group defined by the presentation induced from $Q(\mathcal{D})$:
    \[
        G(\mathcal{D}) = \langle \mathbf{x}\cup \mathbf{a} \mid R'_P, R_O \rangle.
    \]
    Then, this group $G(\mathcal{D})$ is isomorphic to the knot group $G(S)$ of $S$.
\end{lem}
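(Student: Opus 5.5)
The plan is to compute $G(S)=\pi_1(E(S))$ from the Kirby diagram $\mathcal{K}_S$ of the exterior built in the previous subsection, apply Proposition~\ref{prop:Kirby_fnm_group}, and match the resulting presentation relation by relation with $\langle \mathbf{x}\cup\mathbf{a}\mid R'_P,R_O\rangle$. Recall that $\mathcal{K}_S$ is obtained from $\mathcal{D}=(\K,L,v)$ in three steps. Each component of $L$ (a $0$-handle of $S$) becomes a new dotted circle. Each band $b_i\in v$ becomes a $0$-framed $2$-handle, attached along a curve that runs parallel to the band core on both sides. The original dotted link $L_1$ and the framed link $L_2$ are kept, and $3$- and $4$-handles are added. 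Hence the $1$-handles of $E(S)$ correspond to $L_1\cup L$, and the $2$-handles to $L_2$ together with the band curves.

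First I will read off the Wirtinger part. The dotted link of $\mathcal{K}_S$ is $L_1\cup L$, so its arcs are exactly $\mathbf{a}\cup\mathbf{x}$. By the proof of Proposition~\ref{prop:Kirby_fnm_group}, $\pi_1$ of the $0$- and $1$-handles is the link group of $L_1\cup L$ in $S^3$. The Wirtinger relations at crossings whose under-strand lies in $L$ (respectively $L_1$) and whose over-strand lies in $L\cup L_1$ are precisely the group versions $p'_i$ (respectively the relations $o_i$). I must check the crossings where the over-strand is a band or a component of $L_2$. These do not involve the dotted link as an over-arc, so in the link group of $L_1\cup L$ the under-arc simply continues. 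This matches the convention in the definition of $Q(\mathcal{D})$ that such over-strands do not change the label.

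Next I will read off the $2$-handle relations. For a component of $L_2$, reading the over-arcs from $\mathbf{x}\cup\mathbf{a}$ gives exactly $s_j$. For a band $b_i$ joining arcs $x_j$ and $x_k$, I will trace its attaching circle in $\mathcal{K}_S$. The circle starts near the end of $b_i$ on $x_j$ and runs along the front of the band, picking up the over-arc word $w$. It then passes around the end at $x_k$, where it links the dotted circle of that component once and picks up a meridian $x_k^{\pm1}$. It then returns along the back, picking up $w^{-1}$, and finally links $x_j$ once at the starting end. The resulting relator is a cyclic conjugate of $x_j\,w\,x_k^{-1}\,w^{-1}$ with suitable orientations, which is equivalent to $x_k=\bar w x_j w$, i.e.\ $b'_i$. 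Since $3$- and $4$-handles do not affect $\pi_1$, this proves $G(\mathcal{D})\cong G(S)$.

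I expect the main obstacle to be this band computation. One must fix orientation conventions so that the meridians picked up at the two ends have the correct signs; this is where the orientation of $S$, or at least coherent orientations of $L$ and $L_v$, is used. One must also check that the over-arc words read on the front and back strands are genuinely inverse to each other, which is where the blackboard-framing assumption is used. I would handle this with a local model: isotope the band to be short and straight, check the relator in the local picture, and note that sliding the band under further arcs conjugates $w$ consistently on both strands. A secondary subtlety is that $E(\mathcal{K})$ is the exterior in $S^3$, so the dotted circles for $L$ must be distinguished from the $\mathbf{x}$-arcs carrying the same labels. Both are generated by the same meridians, so the identification is immediate.
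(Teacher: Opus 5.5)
Your proposal is correct and follows the paper's proof essentially step for step. You build the Kirby diagram of $E(S)$ with $L_1\cup L$ dotted and the bands and $L_2$ as $2$-handles, then apply Proposition~\ref{prop:Kirby_fnm_group}. You match the dotted-link Wirtinger relations with the $p'_i$ and $o_i$, the $L_2$ words with the $s_j$, and each band $2$-handle word (your cyclic conjugate of $x_j w x_k^{-1}\bar w$, the paper's $u x_j \bar u \bar x_i$) with $b'_i$.

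Your version is somewhat more careful than the paper's about crossings under bands and $L_2$, and about the signs at the band feet, which genuinely need coherent orientations. One small correction: the blackboard framing plays no role in the front and back words being $w$ and $\bar w$. Both parallel strands pass under the same over-arcs in opposite order however the band twists. The framing only affects the $2$-handle framing coefficient, which does not change $\pi_1$.
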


\begin{proof}
    Let $\mathcal{K}$ be the Kirby diagram of the exterior $E(S)$ constructed from $\mathcal{D}$, and let $G(\mathcal{K})$ denote the presentation of $G(S)$ obtained by Proposition~\ref{prop:Kirby_fnm_group}. It suffices to show that $G(\mathcal{K}) \cong G(\mathcal{D})$.
    
    By the construction above, there is a bijection between generators of $G(\D)$ and generators of $G(\K)$. Crossing relations $p'_k$ in $R'_{P}$ are identical to the Wirtinger relations $r_l$ arising from the crossings of the dotted link components. For the band relations $b'_k$, let $b$ be a band connecting $x_i$ to $x_j$, and let $u$ be the word obtained by reading the over-passing arcs along $b$. The word associated with the corresponding $2$-handle is $w_m = ux_j\bar{u}\bar{x}_i$, and the relation $w_m = 1$ gives $x_j = \bar{u}x_iu$, which is exactly the band relation $b'_k$. The operator relations $R_O$ correspond to the Wirtinger and $2$-handle relations of $\mathcal{K}$. Therefore $G(\mathcal{K}) \cong G(\mathcal{D})$.

\end{proof}

\begin{remark}\label{rmk:operator_group}
    It is important to note that the group $G(\mathcal{D})$ constructed in Lemma~\ref{lem:knot_group_BUD} is precisely the operator group associated with the augmented presentation of $Q(\mathcal{D})$.  By definition, the operator group of an augmented quandle presented by generators $S_P \cup S_O$ and relations $R_P \cup R_O$ is the group defined by the same generators and the induced group relations (where quandle operations are replaced by conjugations). Thus, Lemma~\ref{lem:knot_group_BUD} establishes that the abstract operator group of the fundamental augmented quandle is isomorphic to the knot group $G(S)$.
\end{remark}

\begin{thm}\label{thm:qdle_presentation}
	Let $S$ be a surface link and $\D$ be a banded unlink diagram of $\K$. Then $Q(S)$ and $Q(\D)$ are isomorphic. 
\end{thm}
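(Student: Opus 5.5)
We construct a map $\Phi: Q(\D)\to Q(S)$ of augmented quandles, covering the group isomorphism $\phi: G(\D)\to G(S)$ of Lemma~\ref{lem:knot_group_BUD}, and show that $\Phi$ is bijective. Fix a base point $*$ high above the diagram plane in $X_{3/2}$, far from the Kirby diagram. To each primary generator $x_i$ we assign the noose $N_{x_i}=(D_i,\xi_i)$. Here $D_i$ is a small meridian disk of $S$ centred at a point of the arc $x_i$ (in the level $X_{3/2}$), and $\xi_i$ is the straight path from $\partial D_i$ up to $*$. To each operator generator $a_j$ we assign the Wirtinger loop around the corresponding arc of the dotted link, based at $*$. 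This is exactly the generator assignment of Proposition~\ref{prop:Kirby_fnm_group} and Lemma~\ref{lem:knot_group_BUD}, so $\partial(N_{x_i})=\phi(x_i)$. By the universal property of the extended free quandle, these assignments extend to a morphism $FQ(\mathbf x,\mathbf a)\to Q(S)$, $x_i^w\mapsto N_{x_i}\cdot\phi(w)$, compatible with the augmentations.

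Next we check that this morphism respects the relations, so that it descends to $\Phi$. The operator relations $R_O$ hold in $G(S)$ by Lemma~\ref{lem:knot_group_BUD}, and $G(S)$ acts on $Q(S)$, so relations of the third type are respected. For a crossing relation $x_{i+1}=x_i^{y}$, we slide the meridian disk along $S$ under the over-strand. The path $\xi_i$ is dragged around the over-arc, so $N_{x_{i+1}}$ is homotopic to $N_{x_i}\cdot\partial(y)$; this is the classical Wirtinger argument of Theorem~\ref{thm:3-mfd_quandle}. If the over-strand is a band or a component of $L_2$, the slide does not link anything that changes the homotopy class in the exterior, so no operator appears. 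For a band relation $x_k=x_j^w$, we slide the meridian disk across the band core. The tail then picks up the loops around the strands passing over the band, giving exactly $N_{x_j}\cdot\phi(w)$. Finally, the relations of the second type (that $z^x\sim z^y$ whenever $x=y$ in $R_P$) are automatic, because $\partial$ on $Q(S)$ is well defined.

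For surjectivity, take any noose $(D,\xi)$. Isotope $D$ along $S$ so that its centre lies on some arc $x_i$ in level $X_{3/2}$; this is possible because every point of $S$ is joined to the middle level within $S$, through the disks and bands. The path $\xi$ then differs from $\xi_i$ by a loop $\gamma\in G(S)$. Since $\phi$ is surjective, $\gamma=\phi(w)$ for some $w$, and so $(D,\xi)=\Phi(x_i^{w})$.

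Injectivity is the main obstacle. I would follow the Fenn--Rourke strategy: use the general fact that for a connected codimension-two $S$ with a chosen meridian, $Q(S)$ is the set of cosets $P_i\backslash G(S)$ of the peripheral subgroups, one orbit per component. On the diagram side, each element of $Q(\D)$ is $x_i^w$ with $x_i$ representing a component. I would show that the stabilizer of $x_i$ in $G(\D)$ is generated by elements mapping onto $\pi_1(\partial\nu(S)\text{-fibre-preserving loops})$, i.e. onto the peripheral group $\mathrm{im}(\pi_1(\partial\nu(S_i))\to G(S))$. This peripheral group is generated by $\partial(x_i)$ together with lifts of loops on $S_i$, which in turn are generated by band and disk-boundary loops. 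Each such loop fixes $x_i$ in $Q(\D)$ by chaining crossing and band relations around the loop. Hence $\Phi$ is injective on each orbit. Distinct components go to distinct orbits, so $\Phi$ is injective overall.

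The delicate points are two. First, we must verify that the generators of $\pi_1(S_i)$ are realized diagrammatically. The boundaries of the $0$- and $2$-handles of $S$ contribute no new loops. The band loops are exactly the cycles in the graph formed by arcs and bands. Second, we must control the behaviour over the $2$-handles of $X$, which requires that the longitudes obtained from traversals be accounted for by the $s$-relations.
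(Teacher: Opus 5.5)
Your proposal is correct, but its injectivity argument takes a genuinely different route from the paper.

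\textbf{Where you agree with the paper.} Your $\Phi$ is the paper's $\mu$: nooses at the middle level with tails passing over the diagram, extended through the identification $G(\mathcal{D})\cong G(S)$ of Lemma~\ref{lem:knot_group_BUD}, with crossing and band relations checked by sliding the meridian disk. Your surjectivity argument is the paper's Claim~\ref{claim:N}.

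\textbf{Where you differ.} The paper builds an explicit inverse $\lambda([N])=x_0^w$. It proves $\lambda$ well defined by viewing a homotopy between two representatives of a noose as an element of $\pi_2(E(S),E(S)\cap X_{3/2})$. It then decomposes this into local moves inside the middle level (Wirtinger relations) and passes across $2$-handle cores (band and $s$-relations).

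You instead use the Fenn--Rourke description of each orbit of $Q(S)$ as $P_i\backslash G(S)$. Since $\Phi$ is equivariant over the isomorphism $\phi$, injectivity on an orbit reduces to $\phi^{-1}(P_i)\subseteq \mathrm{Stab}_{G(\mathcal{D})}(x_i)$. You check this on generators of $P_i$:
\begin{itemize}
\item the meridian, fixed by idempotency;
\item push-offs of cycles in the arc--band graph, fixed by chaining crossing and band relations. The chained word is exactly the Wirtinger word of the push-off taken just below level $3/2$, where bands and $L_2$ contribute no generators.
\end{itemize}

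\textbf{What each route buys.} Yours absorbs all the homotopy-theoretic content --- the $2$-handles of $X$, and the minimum and maximum disks of $S$ --- into the group-level Lemma, so the only new verification is a finite diagrammatic one. The paper's route gives an explicit inverse, but it depends on a decomposition of arbitrary noose homotopies into local moves that is justified only briefly.

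Your second ``delicate point'' is unnecessary. All generators of $\pi_1(S_i)$ can be taken in $L\cup v\subset X_{3/2}$, because the capping disks only add relations. Two lifts of the same loop differ by meridian powers. So the $s$-relations never need revisiting beyond their role in Lemma~\ref{lem:knot_group_BUD}.

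\textbf{Points to tighten.}
\begin{itemize}
\item State the stabilizer claim as the inclusion above, not as ``generated by elements mapping onto $P_i$''. The reverse inclusion is automatic from equivariance.
\item Say explicitly that the action of $F(\mathbf{x}\cup\mathbf{a})$ on $Q(\mathcal{D})$ factors through $G(\mathcal{D})$.
\item Say explicitly that all arcs of one component of $S$ lie in a single $G(\mathcal{D})$-orbit, by chaining crossing and band relations. This is what makes orbits correspond to components on both sides.
\end{itemize}
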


\begin{proof}

    We shall define two quandle homomorphisms $\mu:Q(\D)\to Q(S)$ and $\lambda:Q(S)\to Q(\D)$, such that $\mu\circ\lambda=\lambda\circ\mu=id$. 

    Let us define $\mu: Q(\mathcal{D}) \to Q(S)$.

    Let $*$ be the base point in $X_{3/2} \setminus S$. For each primary generator $x_i$ in $\mathbf{x}$ of $Q(\mathcal{D})$, define $\mu(x_i)$ to be the homotopy class $[N_i]$ of a noose $N_i=(D_i, \xi_i)$ in $X_{3/2}$, such that the disk $D_i$ is a meridian disk of the arc represented by $x_i$, and the path $\xi_i$ starts at a point on $\partial D_i$ and ends at $*$. We choose $\xi_i$ so that, when drawn on $\mathcal{D}$, it passes over all other arcs of $\mathcal{D}$.
    
    By Lemma~\ref{lem:knot_group_BUD}, we identify the operator group of $Q(\mathcal{D})$ with the knot group $G(S)$. 
    Consequently, every word $b$ in the free group $F(\mathbf{x} \cup \mathbf{a})$ determines a well-defined element in the knot group $G(S)$. Let $\gamma_b$ be a loop embedded in $E(S)$ representing the group element determined by $b$. We define the action of $\mu$ by:
    \[
        \mu(x_i^{b}) = [N_i]^{[\gamma_b]} = [(D_i, \xi_i \cdot \gamma_b)].
    \]
    
    We verify the well-definedness by checking the relations. 

    \textit{Crossing relations} ($p_i$):
    Consider a crossing where an arc $y$ passes over an arc $x$, dividing it into $x_j$ and $x_k$. The relation is $x_k = x_j^y$. 
    In $Q(S)$, sliding the meridian disk $D_j$ along the under-passing arc $x$ to the position of $D_k$ requires passing under the arc $y$. This corresponds to conjugating the noose by the meridian of $y$. Thus, $\mu(x_k) = \mu(x_j)^{\mu(y)}$, satisfying the relation.

    \textit{Band relations} ($b_i$):
    Let $x_j$ and $x_k$ be arcs connected by a band $b$, with the associated word $w$ obtained by reading the arcs passing over the band. The relation is $x_k = x_j^w$. 
    Let $\gamma_w$ be a loop in $E(S)$ representing the group element determined by $w$. 
    Geometrically, the meridian disk $D_j$ can be isotoped along the band $b$ to become $D_k$. During this isotopy, the path $\xi_j$ is extended by the path running along the band, which traces exactly the loop $\gamma_w$.
    Thus, 
    \[
        \mu(x_k) = [(D_k, \xi_k)] = [(D_j, \xi_j \cdot \gamma_w)] = \mu(x_j)^w.
    \]

    \textit{Operator relations} ($R_O$):
    The relations in $R_O$ define the structure of the operator group. Since we have identified the operator group of $Q(\mathcal{D})$ with $G(S)$, any relation $u = v$ holding in the operator group implies that the words $u$ and $v$ represent the identical element in the knot group $G(S)$. 
    Consequently, their actions on the quandle element are identical: $\mu(x^u) = \mu(x^v)$.

   Then, let us define $\lambda:Q(S)\to Q(\D)$.  
   
   Let $S_+=S\cap f^{-1}[3/2,4]$ and $S_-=S\cap f^{-1}[0,3/2]$. Then $S_+\cap S_-$ is the banded unlink $S_0$ in $X_{3/2}$. Fix a basepoint $*$ in $X_{3/2}\setminus S$. 
    
    First, for nooses on $S_{\pm}$, we claim that:
    
    \begin{claim}\label{claim:N}
        For every element $[N]$ in $Q(S)$, where $N$ is a noose on $S_+$, there exist a noose $N_0=(D_0,\xi_0)$ in $ X_{3/2}$, and a loop $\alpha$ in $E(S)$ based at $*$, such that $[N]=[N_0]^{[\alpha]}$. 
    \end{claim}
    
    \begin{proof}[Proof of Claim \ref{claim:N}]
        Let $\Delta$ be a trivial disk in $S_+$. 
        Choose one point $p$ on $\Delta$, take the meridian disk $D$ at $p$, and take a path $\xi$ in $E(S)$ from a point on $\partial D$ to $*$. Then $N=(D,\xi)$ is a noose on $\Delta$. 
        Choose a noose $N_0=(D_0,\xi_0)$ in $ X_{3/2}$ such that $D_0\cap S_0$ is a point of the arc $x_0$ in the diagram $\D$, and $\xi_i$ is a path passing over all arcs when drawn on $\D$.

        Choose paths as follows:
        \begin{itemize}
          \item a path $\rho_1$ in $D_0$ from the start point of $\xi_0$ to $p_0$;
          \item a path $\rho_2$ in $\Delta$ from $p_0$ to $p$;
          \item a path $\rho_3$ in $D$ from $p$ to the start point of $\xi$.
        \end{itemize}
        
        By pushing $\rho_2$ slightly in the normal direction of $\Delta$, we may also assume that $\rho_2$ lies in $E(S)$. Then $\rho=\rho_1\cdot \rho_2\cdot\rho_3$ is a path from the start point of $\xi_0$ to the start point of $\xi$, contained in $E(S)$. Define a loop $\alpha=\xi_0^{-1}\cdot \rho \cdot \xi$ in $E(S)$. 
        
        Since $[\alpha]\in G(S)$ is an operator of $Q(S)$, we have $[N]=[N_0]^{[\alpha]}=[(D_0,\xi_0\cdot\alpha)]$. 

        Nooses on $S_-$ can be shown in the same way. 
    \end{proof}    
    
    Let us continue the proof of Theorem~\ref{thm:qdle_presentation}. 
    
    Take a noose $N$ on $S_+$ or $S_-$, then $[N]$ is an element in $Q(S)$. By the above claim, write $[N]=[N_0]^{[\alpha]}$.
    Let $w$ be a word representing $[\alpha]\in G(S)$ using the presentation
    described in Lemma~\ref{lem:knot_group_BUD}.
    We define
    $$\lambda([N]) :=x_0^w.$$
    where $x_0$ in $Q(\D)$ is determined by $N_0$. 
    
    Finally, we verify that $\lambda: Q(S) \to Q(\mathcal{D})$ is well-defined. Suppose an element $[N]$ of $Q(S)$ is represented by two pairs $(N_0, [\gamma])$ and $(N'_0, [\gamma'])$. Let $y = \lambda([N_0])^{[\gamma]}$ and $y' = \lambda([N'_0])^{[\gamma']}$ be their respective images in $Q(\mathcal{D})$. We must show that $y = y'$.

    Since the representing nooses are homotopic, this homotopy traces out an immersed disk $H$ in $E(S)$ bounding $\gamma$ and $\gamma'$, and a path on the boundary of the tubular neighborhood of $S$. We can regard $H$ as an element of the relative homotopy group $\pi_2(E(S), E(S) \cap X_{3/2})$. Since $E(S) \cap X_{3/2}$ is the boundary of the union of $0$-handles and $1$-handles of $E(S)$, this relative homotopy group is spanned by the core disks of the $2$-handles of $E(S)$. Then $H$ can be homotoped into the union of $X_{3/2}\cap E(S)$ and the core disks of $2$-handles, and can be decomposed in the relative homotopy group as a product $H = H_1H_2\cdots H_m$, where $H_i$ is a homotopy of a local move, either whose image is entirely contained in $X_{3/2}\cap E(S)$, or represents a homotopy across the core disk of a $2$-handle.

    When $H_i$ is a homotopy contained in $X_{3/2}\cap E(S)$, it can be represented by the Wirtinger presentation of link diagram by Theorem~\ref{thm:3-mfd_quandle}. 
    When $H_i$ is a homotopy across the core disk of $2$-handle of $E(S)$, there is a relation in $G(S)$ corresponding to this $2$-handle attaching link. 
    
    By using the decomposition of $H$, the deformation of the path $\gamma$ can be decomposed into a finite sequence of intermediate paths, say $\gamma_0, \gamma_1, \dots, \gamma_m$, where $\gamma_0$ represents $y$ and $\gamma_m$ represents $y'$, and $H_i$ is a homotopy from $\gamma_{i-1}$ to $\gamma_i$. For each step, if the homotopy $H_i$ is entirely contained in $X_{3/2}\cap E(S)$, it corresponds to a local move on the diagram, and the algebraic expressions for $\gamma_{i-1}$ and $\gamma_i$ differ by exactly one application of a Wirtinger relation. On the other hand, if $H_i$ crosses the core disk of a $2$-handle, their algebraic expressions differ by a conjugate of a band relation $b_j$ or a $2$-handle relation $s_j$. However, since $Q(\mathcal{D})$ is defined as the quotient space modulo exactly these relations, the equivalence classes of these successive nooses are forced to be equal, yielding $[\gamma_{i-1}] = [\gamma_i]$. By transitivity along the sequence of local moves, we obtain the equalities $y=y'$. This completes the proof.

    By the definition, $\mu$ and $\lambda$ are both quandle homomorphism, and $\mu\circ\lambda=id_{Q_({S})}$ and $\lambda\circ\mu=id_{Q_{(\D)}}$. 
\end{proof}

\begin{remark}
    In the proof of Theorem~\ref{thm:qdle_presentation}, the diagram $\D$ of $S$ is chosen arbitrarily. Therefore, the resulting quandle presentation $Q(\D)$ is independent of the choice of $\D$.
\end{remark}

Let $X$ be a $4$-manifold. A surface link $S$ embedded in $X$ is said to be \emph{local} if there exists a smoothly embedded $4$-ball $B^4 \subset X$ such that $S$ is entirely contained in the interior of $B^4$.

\begin{cor}\label{cor:local_surface}
    Let $X$ be a closed $4$-manifold admits a handle decomposition without $1$-handles. Then the fundamental quandle of a local surface link $S$ is isomorphic to that of the corresponding surface link in $S^4$.
\end{cor}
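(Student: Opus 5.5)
We plan to compute both fundamental quandles from banded unlink diagrams and compare the presentations using Theorem~\ref{thm:qdle_presentation}. Since $S$ is local, it lies in the interior of a smoothly embedded $4$-ball $B^4\subset X$. First we isotope $B^4$ (and $S$ with it) into a small ball inside the $0$-handle of a handle decomposition of $X$ without $1$-handles. By assumption such a decomposition exists, and by uniqueness of embedded $4$-balls up to isotopy (disc theorem) we may assume $B^4$ lies in the interior of the $0$-handle. Choose a banded unlink diagram $\D_0=(\emptyset, L, v)$ of the corresponding surface link $S_0\subset S^4$, all drawn inside a $3$-ball $B^3\subset S^3=\partial(0\text{-handle})$. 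Then $\D=(\K, L, v)$, with $\K=L_2$ (no dotted circles) drawn in the complement of $B^3$ and split from $L\cup v$, is a banded unlink diagram of $S\subset X$. Here we must check the unlink conditions: $L$ bounds disks in $X_{1/2}$, and $L_v$ bounds disjoint disks in $X_{5/2}$. These hold because they already hold inside the ball, where $S_0$ is in banded unlink position.

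Next we compare $Q(\D)$ with $Q(\D_0)$. Since $X$ has no $1$-handles, $\mathbf a=\emptyset$, so there are no operator generators and no crossing operator relations $o_i$. Because $L_2$ is split from $L\cup v$, no arc of $\mathbf x$ passes over or under $L_2$. Hence the crossing relations $p_i$ and band relations $b_i$ of $\D$ coincide exactly with those of $\D_0$, since over-strands from $L_2$ never appear. The only extra data are the $2$-handle relations $s_j$. Each $s_j$ is obtained by reading the arcs of $\mathbf x\cup\mathbf a$ passing over the component of $L_2$, and this word is empty, so $s_j$ is the trivial relation $1\equiv 1$. Therefore the presentations $[\mathbf x,\emptyset\mid R_P,\emptyset]$ of $Q(\D)$ and $Q(\D_0)$ are identical, and Theorem~\ref{thm:qdle_presentation}, applied in $X$ and in $S^4$, gives $Q(S)\cong Q(\D)=Q(\D_0)\cong Q(S_0)$.

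The main obstacle is the first step: justifying that the local surface can be placed in banded unlink position relative to the given Kirby diagram with its diagram split from $\K$. This requires both the isotopy of the ball into the $0$-handle and the observation that a banded unlink presentation of $S_0$ inside $B^4$ extends to $X$ while keeping the index-$0$ and index-$2$ critical points in the prescribed levels. I would argue this directly: the movie of $S_0$ in $B^4\cong B^3\times I$ embeds into a collar of the level $X_{3/2}$, which avoids the $2$-handles. The disks capping $L_v$ then lie in $B^3$, which is disjoint from the $2$-handle attaching regions.
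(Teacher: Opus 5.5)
Your proposal is correct and follows the same route as the paper: choose a banded unlink diagram of $(X,S)$ in which $L\cup v$ is split from $\K$. Then there are no operator generators, the crossing and band relations are exactly those of the diagram in $S^4$, and the $2$-handle words are empty. The paper only asserts that such a diagram exists, and your sketch of why it does (moving the ball into the $0$-handle, then flowing a $3$-ball that misses the attaching circles through the levels) is a welcome addition rather than a different argument.
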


\begin{proof}
    We can find a banded unlink diagram $\D$ of $(X,S)$ where $L$ and $v$ are separated from, and thus have no crossings with $\K$. Thus there are only primary generators and relations in the presentation of $Q(S)$, which is equal to the presentation of the fundamental quandle of the corresponding surface link in $S^4$.
\end{proof}

\begin{example}[Degree $d$ Algebraic curve $C_d$ in $\CP^2$]

    A banded unlink diagram of $C_d$ is shown as Figure~\ref{fig:cpx_curve}. From this diagram, we see that there is a unique primary generator, namely $x$, and no operator generators.
    Reading around the attaching circle of the $2$-handle yields the operator relation $x^d \equiv 1$. It follows that $Q(C_d)=[x\mid x^d\equiv1]=[x]$ for every $d$, that is, the quandle generated by a single element.
	
	For surface knot in $S^4$, the associated group of the knot quandle agrees with the knot group. However, this correspondence does not persist in other ambient 4-manifolds. Using Proposition~\ref{prop:Kirby_fnm_group} together with the broken surface diagrams of algebraic curves, we compute that the fundamental group of the complement of $C_d$ in $\CP^2$ is isomorphic to $\Z_d$. 
    \end{example}

\begin{lem}\label{lem:=1}
	Let $\D$ be a banded unlink diagram of $\K$, and let $C=\{x_1,\dots ,x_n\}$ be the generators corresponding to the arcs contained in a single component of $\D$. If $x_i^d\equiv 1$ for some $x_i\in C$, then $x^d_i \equiv 1$ for every $x_i$ in $C$. 
\end{lem}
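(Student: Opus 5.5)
The plan is to show that any two generators on the same component are conjugate in the operator group, and that the relation $x^d\equiv 1$ is preserved under conjugation. The operator relation $\equiv$ is, by definition, equality in the operator group $\Op(Q(\D))$. By Remark~\ref{rmk:operator_group} and Lemma~\ref{lem:knot_group_BUD}, this group is $G(\D)\cong G(S)$. So $x_i^d\equiv 1$ just means that $\partial(x_i)^d=1$ in $G(\D)$.

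First, fix a component of $\D$, meaning a component of the banded unlink $L\cup v$, and list its arcs $x_1,\dots,x_n$. Connectedness of the component means that any two arcs in $C$ can be joined by a chain of adjacent arcs. Two arcs are adjacent if they are separated by an undercrossing of $L$, or if they are connected through a band $b\in v$.

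Second, each adjacency comes with a primary relation, and this relation descends to conjugacy in $G(\D)$:
\begin{itemize}
\item at a crossing, $x_{k}=x_j^{y}$ with $y\in\mathbf{x}\cup\mathbf{a}$ gives $\partial(x_k)=\bar y\,\partial(x_j)\,y$ (the relations $p'_i$);
\item along a band, $x_k=x_j^{w}$ gives $\partial(x_k)=\bar w\,\partial(x_j)\,w$ (the relations $b'_k$);
\item if the over-strand is a band or an arc of $L_2$, the label is unchanged, so the conjugating element is trivial.
\end{itemize}
Composing along the chain, for any $x_i,x_k\in C$ there is $g\in G(\D)$ with $\partial(x_k)=g^{-1}\partial(x_i)g$. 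This is exactly the augmentation identity of Remark~\ref{rmk:augmented_map} applied repeatedly.

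Third, conjugation preserves the relation: $\partial(x_k)^d=g^{-1}\partial(x_i)^d g=g^{-1}g=1$. Hence $x_k^d\equiv 1$ for every $x_k\in C$. Equivalently, at the quandle level, $z^{x_k^d}=z^{\bar g x_i^d g}=z$ for all $z$.

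The only delicate step is the first one: making precise that ``component of $\D$'' means a connected component of $L$ together with its attached bands. We also need every passage from one arc to the next to be recorded by some primary relation. For passages through a band, the arcs at the two feet of the band are related by the band relation $b_i$. Arcs of $L$ that merely meet at a band foot without a crossing are the same arc or are related trivially ($w\equiv1$). Once this bookkeeping is settled, the rest is a formal conjugation argument.
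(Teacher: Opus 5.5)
Your argument is correct and is essentially the paper's: each crossing relation makes adjacent generators conjugate in the operator group, conjugation preserves $x^d\equiv 1$, and you iterate along the component. You also include band relations, which the paper's proof leaves out because it iterates only over crossings; this is a worthwhile addition, since it covers ``component'' in the sense of a connected component of $L\cup v$, and that is how the lemma is later invoked in Lemma~\ref{lem:twist_spun+Sigmad}.
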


\begin{proof}
	Consider the crossing $c$ which include $x_i$ in $C$. The primary relation of the crossing can be written as $x_i*y^\epsilon=x_{i+1}$, where $y$ is the over arc of $c$, and $\epsilon=\pm1$ depends on the orientation of $y$. It follows that $x_{i+1}^d\equiv (\bar y^\epsilon x_iy^\epsilon)^d\equiv1$. By iterating this argument along every crossings around the component $C$, we conclude that $x_j^d\equiv 1$ for every $x_j$ in $ C$.
\end{proof}

\begin{cor}\label{cor:no_1-handle}
    Let $X$ be a $4$-manifold without $1$-handle. Then the fundamental quandle of every surface link $S$ in $X$ has primary presentation.
\end{cor}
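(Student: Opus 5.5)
The plan is to take any banded unlink diagram $\D=(\K,L,v)$ of $(X,S)$, where $\K$ is a Kirby diagram of $X$ coming from a handle decomposition with no $1$-handles. I will then read off the presentation of Theorem~\ref{thm:qdle_presentation}. Since $\K$ has no dotted circles, the set $\mathbf{a}$ of operator generators is empty. The operator relations $o_1,\dots,o_k$, which come from crossings whose under-arc lies in $L_1$, are therefore absent. By Theorem~\ref{thm:qdle_presentation} we have $Q(S)\cong Q(\D)=[\mathbf{x}\mid R_P,\,s_1,\dots,s_l]$. The words in the band relations and crossing relations now involve only letters of $\mathbf{x}$: an over-arc from $L_2$ or from a band contributes nothing. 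So every element of $R_P$ is already a primary relation in $FQ(\mathbf{x},\emptyset)$.

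The only remaining issue is the operator relations $s_j$ coming from the $2$-handle attaching circles of $L_2$. Each $s_j$ has the form $w_j\equiv 1$ with $w_j\in F(\mathbf{x})$. I would show that each such relation is either redundant or can be traded for primary relations. The cleanest route is to rewrite $z^{w_j}\sim z$ for a generator $z$ as the primary relation $z^{w_j}=z$. I will then check that imposing $x_i^{w_j}=x_i$ for all primary generators $x_i\in\mathbf{x}$, together with its consequences under the congruence, is equivalent to imposing $z^{w_j}\sim z$ for all $z\in FQ(\mathbf{x},\emptyset)$.

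For this check, let $z=x_i^u$ be arbitrary. Then $z^{w_j}=x_i^{uw_j}$, and this must equal $x_i^{u}$. Applying $\bar u$, this is the same as $x_i^{uw_j\bar u}=x_i$. So I need the set of words $g$ with $x_i^g=x_i$ for all $i$ (the kernel of the action) to be normal. That holds because a word acting trivially on every generator acts trivially on every element $x_k^v$ of the quandle, and this property is conjugation invariant: $x^{\bar u g u}=((x^{\bar u})^g)^u=x$. Hence adding the finitely many primary relations $x_i^{w_j}=x_i$ for $i=1,\dots,n$ and $j=1,\dots,l$ produces exactly the congruence generated by the $s_j$. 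It does not identify any further elements, because each added relation already holds in $Q(\D)$. This gives a primary presentation $[\mathbf{x}\mid R_P,\{x_i^{w_j}=x_i\}]$.

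The main obstacle is the precise comparison of the two congruences in this last step. On one side is the smallest congruence containing the primary relations, which also forces $z^{x}\sim z^{y}$ for those relations. On the other is the congruence generated by operator relations. I would first show the two quotients map to each other by the identity on generators, checking that each defining relation of one holds in the other. Lemma~\ref{lem:=1} can be used to simplify the case where $w_j$ is a power $x^d$ of a single component's generator, as in the $C_d$ example, but the general argument above does not require it.
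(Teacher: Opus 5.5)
Your proposal is correct and follows the same route as the paper. With no dotted circles, $\mathbf{a}=\emptyset$, and each remaining $2$-handle relation $w\equiv 1$ is traded for the primary relations $x_i^{w}=x_i$ for every primary generator $x_i$. The paper only asserts this rewriting, so your check that the two congruences coincide supplies detail it leaves implicit: a word fixing all generators acts trivially on the whole quandle, because each $f_y$ is a quandle automorphism, and hence that stabilizer is normal.
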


\begin{proof}
    The banded unlink diagram $\D$ of $S$ has no $1$-handle, so $Q(\D)$ has no operator generator, and every operator relation can be written as $w\equiv1$ for some word $w$. Rewrite this relation into $x_i^w=x_i$ for every primary generator $x_i$, every relation is a primary relation. 
\end{proof}

\begin{example}
    Let $S_d$ be the connected sum of the spun trefoil and the degree-$d$ algebraic curve $C_d$ in $\CP^2$. 
    The banded unlink diagram of $S_1$ is shown in the left-hand picture of Figure~\ref{fig:BUD->Kirby}.
    
    The fundamental quandle $Q(S_d)$ admits a presentation
    \[
    [x_1,x_2 \mid x_1^{x_2}=x_2^{\bar x_1},\ x_1^d\equiv x_2^d\equiv1].
    \]
\end{example}

We now generalize the previous example as a corollary of Theorem~\ref{thm:qdle_presentation}.

\begin{cor}\label{cor:K(S+C_d)}
    Let $C_d$ be a degree-$d$ algebraic curve in $\CP^2$, and let $S$ be a surface knot embedded in $S^4$. Then the fundamental kei $K(S\#C_d)$ is determined by $d$ and $K(S)$: 
    \begin{itemize}
        \item If $d=2k$ for some integer $k$, then $K(S\#C_d)$ is isomorphic to $K(S)$.
        \item If $d=2k-1$ for some integer $k$, then $K(S\#C_d)$ is isomorphic to $\langle x\rangle$, which is the kei generated by a single element.
    \end{itemize}
\end{cor}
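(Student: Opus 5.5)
We use Theorem~\ref{thm:qdle_presentation} to write a presentation of $Q(S\#C_d)$, and then pass to the kei quotient by adding the relations $x^{yy}=x$. First we build a banded unlink diagram of $S\#C_d$. Take a banded unlink diagram $\D_S$ of $S$ in $S^4$; it has no Kirby part. Take the diagram of $C_d$ from Figure~\ref{fig:cpx_curve}, with its $(+1)$-framed $2$-handle $L_2$. Form the connected sum by adding one band joining a component of $L$ in $\D_S$ to a component of the $C_d$ diagram, as in Figure~\ref{fig:BUD->Kirby} for the spun trefoil.

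By Corollary~\ref{cor:no_1-handle} there are no operator generators. The band relations of the $C_d$ part identify all its primary generators with a single $x$, as in the earlier example. The connecting band identifies $x$ with some arc $x_1$ of $\D_S$. The only new operator relation comes from $L_2$: reading the $d$ arcs passing over it gives $x^d\equiv1$. By Lemma~\ref{lem:=1}, this propagates to $x_i^d\equiv1$ for every arc $x_i$ of the single component $S\#C_d$. We therefore expect
\[
Q(S\#C_d)\cong[\,\mathbf{x}\mid R_P(\D_S),\ x_i^d\equiv1\ \forall i\,],
\]
which matches the $d$-dependent example given just before the statement.

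Next we pass to keis. In a kei every generator acts as an involution, so $x_i^2\equiv1$ holds automatically. Suppose $d=2k$. Then $x_i^{d}=(x_i^2)^k\equiv1$ already holds in $K(S)$, so the extra relations are redundant and $K(S\#C_d)\cong K(S)$. Suppose $d=2k-1$. Then $x_i^{2k-1}\equiv1$ together with $x_i^2\equiv1$ gives $x_i\equiv1$, that is, $z^{x_i}=z$ for all $z$. Each Wirtinger relation $x_{j+1}=x_j^{x_l}$ then collapses to $x_{j+1}=x_j$. Since the component is connected, all arcs are identified with a single generator $x$, and $K(S\#C_d)\cong[x]=\langle x\rangle$, the trivial one-element kei.

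The main obstacle is making precise the claim that an operator relation $w\equiv1$ passes correctly to the kei quotient. The cleanest approach is to define $K(\cdot)$ as the quotient of the augmented presentation by the additional operator relations $y^2\equiv1$ for all primary $y$. One must check that this agrees with the usual fundamental kei, and that the $L_2$ relation is exactly $x^d$, with all $d$ over-arcs equal to $x$ after the band relations are imposed. A second point needs care: for a multi-component $S$ the collapse in the odd case yields one generator per component, so the statement is for a surface knot. The even-case isomorphism should also be made explicit by checking that the presentation map sends generators to generators.
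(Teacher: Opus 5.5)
Your proposal is correct and follows the same route as the paper. By Theorem~\ref{thm:qdle_presentation}, taking the connected sum with $C_d$ adds no generators, only the operator relations $x_i^d\equiv 1$. In the kei quotient these relations are redundant when $d$ is even; when $d$ is odd they combine with $x_i^2\equiv 1$ to give $x_i\equiv 1$, which collapses everything to a single element. Your version spells out details the paper leaves implicit: the diagram for the connected sum, the propagation via Lemma~\ref{lem:=1}, and the collapse of the relations in the odd case.
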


\begin{proof}
    We prove this corollary by computing the diagram. The fundamental kei $K(S)$ of surface knot $S$ in $S^4$ only has primary generators and relations. By taking the connected sum with $C_d$, no new generators are introduced, but new operator relations $x_i^d\equiv1$ are added to the presentation. 
    If $d=2k$, then the relation $x_i^{2k}\equiv1$ follows from the second axiom of a kei, and hence $K(S\#C_d)\cong K(S)$.  On the other hand, If $d=2k-1$, combining the second axiom of kei with the relation $x_i^{2k-1}\equiv1$, we get $x_i\equiv1$. This implies $K(S\#C_d)\cong \langle x\rangle$.
\end{proof}


\subsection{Quandle colorings}

We will study the set of homomorphisms $\Hom(Q(\mathcal{D}), Q)$.

\begin{defn}
Let $Q$ be a finite quandle, and let $S$ be a surface link embedded in a $4$-manifold $X$. A \textit{$Q$-coloring} of $S$ is a quandle homomorphism $c: Q(S) \to Q$. We denote by $\Col_Q(S)$ the set of all $Q$-colorings of $S$. The cardinality $\#\Col_Q(S)$ is called the \textit{$Q$-coloring number} of $S$.
\end{defn}

By Theorem~\ref{thm:qdle_presentation}, the coloring number is an invariant of the surface link $S$.

\begin{example}
	For any integer $d$, the degree-$d$ algebrain curve $C_d$ admits only the constant coloring. Consequently, $\#\Col_Q(C_d)=\#Q$ for any finite quandle $Q$. 
\end{example}

\begin{example}
    Let $S_2$ be the connected sum of the spun trefoil and the degree-$2$ algebraic curve. By computation, the $R_3$-coloring number $\#\Col_{R_3}(S_2)=9$. 
\end{example}


\section{Applications to bridge number of surface knots in $4$-manifolds}\label{sec:applications}

\subsection{Trisections and surface links}

We first recall the definition of trisection and bridge trisection, following \cite{gay2016trisecting} and \cite{meier2018bridge}. In particular, bridge trisections provide a framework that generalizes the notion of bridge number for classical knots to surfaces embedded in $4$-manifolds.

\begin{defn}
	Let $X$ be a closed $4$-manifold. A $(g, k_1, k_2, k_3)$\textit{-trisection} $\mathcal{T}$ of $X$ is a decomposition $X=X_1\cup X_2\cup X_3$ where 
	\begin{itemize}
		\item $X_i\cong \#^{k_i} S^1\times D^3$ for each $i$ in $ \{1,2,3\}$; 
		\item $X_{ij}=X_i\cap X_j=\partial X_i \cap\partial X_j\cong \natural^g S^1\times D^2$ for each $i,j$ in $ \{1,2,3\}$ and $i\neq j$; 
		\item $\Sigma=X_1\cap X_2\cap X_3\cong \#^g S^1\times S^1$.
	\end{itemize}
\end{defn}

\begin{defn}
	Let $X$ be a $4$-manifold with a trisection $\mathcal{T}$, and let $S$ be a closed surface embedded in $X$. We say $S$ is in $(g,k_1,k_2,k_3; b,c_1,c_2,c_3)$-\textit{bridge position} if 
	\begin{itemize}
		\item $D_i=S\cap X_i$ is a collection of $c_i$ embedded boundary parallel disjoint disks in $X_i$;
		\item $\tau_{ij}=S\cap X_{ij}$ is a collection of $b$ boundary parallel disjoint arcs in $X_{ij}$; 
		\item $S\cap \Sigma$ consists of $2b$ points. 
	\end{itemize}
	If $S$ is in the bridge position in $X$, then we call the decomposition $(X,D)=(X_1,D_1)\cup (X_2,D_2)\cup (X_3,D_3)$ a $(g,k_1,k_2,k_3; b,c_1,c_2,c_3)$-\textit{bridge trisection}. The \textit{bridge number} of $S$, denoted by $b(S)$, is defined to be the minimum value of $b$ among all bridge trisections of $(X,S)$.
\end{defn}

A bridge trisection of $S$ can be constructed from a banded unlink diagram of $S$, associated with a Morse function $h$. The disks corresponding to the maxima and minima of $S$ are contained in $X_1$ and $X_3$, respectively, while neighborhoods of the bands are contained in $X_2$. More precisely, see \cite{meier2018bridge}. 

\begin{prop}[\cite{meier2018bridge}]\label{prop:bridge_trisection}
	Let $S$ in $X$ be a surface embedded in $4$-manifold $X$, with a $(g,k_1,k_2,k_3; b,c_1,c_2,c_3)$-bridge trisection $\mathcal{T}$. Then there exists a Morse function $h$ for $(X,S)$ such that 
	\begin{itemize}
		\item $h$ has $k_1$ index-$1$ critical points, $g-k_2$ index-$2$ critical points, and $k_3$ index-$3$ critical points; 
		\item $h|_S$ has $c_1$ minima, $b-c_2$ saddles, and $c_3$ maxima. 
	\end{itemize}
\end{prop}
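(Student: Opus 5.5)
The statement is Proposition~\ref{prop:bridge_trisection}, quoted from \cite{meier2018bridge}. My plan is to read the Morse function off the bridge trisection, by building it piece by piece on the trisection sectors and then counting critical points sector by sector.

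\textbf{Step 1: the ambient function on $X_1$.} I would start by choosing a self-indexing Morse function on $X_1\cong\natural^{k_1}S^1\times D^3$. It has one minimum and $k_1$ index-$1$ critical points, and $\partial X_1$ is a regular level. The pair $(X_1,D_1)$ is a trivial disk system, so the $c_1$ boundary-parallel disks can be isotoped rel boundary to sit near $\partial X_1$ with a single minimum each. This gives the $c_1$ minima of $h|_S$.

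\textbf{Step 2: the middle region $X_2$.} Next I would view $X_2$ as a cobordism from $\partial X_1\cap X_2 = H_{12}$ to $H_{23}$, thickened along $\Sigma$. The standard Gay--Kirby argument applies here. The $g$ curves of the $\alpha$-handlebody determine $2$-handles attached along $\Sigma\subset\partial X_1$. Of these, $k_1$ cancel the $1$-handles, or more precisely $g-k_1$ remain relevant. Then $k_2$ of the handles dualize against $X_2\cong\natural^{k_2}S^1\times D^3$. The net count is $g-k_2$ index-$2$ points, which I would take from the trisection literature.

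\textbf{Step 3: the surface inside $X_2$.} For $S$, the trivial tangle $\tau_{12}\cup\tau_{23}$ bounds the $c_2$ disks of $D_2$. Following Meier--Zupan, I would perturb so that $\tau_{12}$ is the union of the $c_1$ boundary-unlink arcs together with bands. Realize $D_2$ as the trace of $b-c_2$ bands, one for each bridge arc beyond the $c_2$ needed to form the unlink. Here I would use the Euler characteristic count: $D_2$ restricted to the compression gives $b$ arcs and $c_2$ disks, hence $b-c_2$ saddles.

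\textbf{Step 4: symmetry on $X_3$.} Finally, I would treat $X_3$ symmetrically with $h$ reversed. This gives $k_3$ index-$3$ points and $c_3$ maxima.

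\textbf{Main obstacle.} The hard part will be arranging that the bands of $S$ are disjoint from the attaching regions of the ambient $2$-handles. That is, I need the saddles of $h|_S$ to occur at a level separate from the ambient index-$2$ points, so that $h$ is simultaneously Morse on $X$ and on $S$. I would try to resolve this by pushing bands into a collar of $\Sigma$ via the bridge-position property that $\tau_{ij}$ is boundary parallel.
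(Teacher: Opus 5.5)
The paper does not prove this proposition; it quotes it from Meier--Zupan \cite{meier2018bridge}. Your outline has the right shape: $X_1$ yields the $0$- and $1$-handles and the $c_1$ minima, $X_2$ the $2$-handles and the bands, and $X_3$ dually the $3$- and $4$-handles and the $c_3$ maxima. But Step~3, which carries all the content, is asserted rather than proved.

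\textbf{The Euler characteristic count is circular.} Suppose $D_2$ is built from $\tau_{12}\times I$ by $n$ bands together with $m_0$ births and $m_2$ deaths. Then $\chi(D_2)=c_2$ only gives $n=b-c_2+m_0+m_2$. So you get exactly $b-c_2$ saddles, and keep the extremum counts $c_1$, $c_3$ intact, only after showing that $D_2$ can be isotoped rel boundary to have no local extrema in the $X_2$-region. That is, you must show $\tau_{23}$ is obtained from $\tau_{12}$ by band moves alone, with the bands missing the attaching circles of the ambient $2$-handles. This is a statement about how $\tau_{12}$ and $\tau_{23}$ sit relative to each other and to the Heegaard curves. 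It does not follow from each $\tau_{ij}$ being boundary parallel in its own handlebody, which is just the definition of a bridge splitting.

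You also cannot ``perturb'': a perturbation changes the parameters to $b+1$ and some $c_i+1$, so you would be proving the count for a different bridge trisection.

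Separately, your Step~2 bookkeeping is wrong: the number $g-k_2$ has nothing to do with the $1$-handles of $X_1$. It comes from Waldhausen's theorem applied to $\partial X_2=H_{12}\cup_\Sigma H_{23}\cong\#^{k_2}S^1\times S^2$. This exhibits $X_2$ as $H_{12}\times I$ with $g-k_2$ two-handles attached along curves bounding disks in $H_{23}$ that are dual to curves bounding disks in $H_{12}$.

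\textbf{The missing ingredient} is a standard-form theorem for the bridge splitting $(H_{12},\tau_{12})\cup_\Sigma(H_{23},\tau_{23})$ of the $c_2$-component unlink $\partial D_2\subset\partial X_2$ (the $S^3$ version goes back to Otal). It says that, up to isotopy, this splitting is the connected sum of the standard genus-$g$ Heegaard splitting of $\#^{k_2}S^1\times S^2$ with a standard $b$-bridge splitting of the unlink in $S^3$. Concretely, the shadows of $\tau_{12}$ and $\tau_{23}$ can be taken in a disk of $\Sigma$ missing all the Heegaard curves, with union equal to $c_2$ disjoint embedded polygons. If the $i$-th polygon contains $n_i$ arcs of $\tau_{12}$, banding $\tau_{12}$ along thickenings of all but one $\tau_{23}$-shadow in each polygon produces $\tau_{23}$. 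This uses $\sum_i(n_i-1)=b-c_2$ bands and no extrema.

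Your ``main obstacle'' then disappears: the bands miss the $2$-handle attaching curves by construction, and separating critical levels is routine.

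One more input is needed to identify the trace of these bands with the given $D_2$. Trivial disk systems in $\natural^{k_2}S^1\times D^3$ are determined up to isotopy rel boundary by their boundary link, and this is where the triviality of $D_2$ is really used. These two facts are what the cited argument rests on; without them your proposal does not establish the count of $b-c_2$ saddles.
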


\subsection{Surface knots in $4$-manifolds with the common bridge numbers}

In this section, we study the bridge numbers of surface knots. Sato and Tanaka \cite{sato2022bridge} proved that for any integer $b \geq 4$, there exist infinitely many surface knots with bridge number $b$ in $S^4$. As a direct consequence of Theorem~\ref{thm:qdle_presentation} and Corollary~\ref{cor:local_surface}, this result extends to local surface knots in arbitrary $4$-manifolds without $1$-handles:

\begin{cor}\label{cor:local_infinite_many}
    Let $X$ be a closed $4$-manifold which admits a handle decomposition without $1$-handle. For any integer $b\geq4$, there exist infinitely many pairwise non-isotopic local embedded surfaces with bridge number $b$ in $X$. 
\end{cor}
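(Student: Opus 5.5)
The plan is to transport the Sato--Tanaka family from $S^4$ into $X$ by a local embedding, and to check two things. First, the surfaces stay pairwise non-isotopic, which we detect with quandle colorings. Second, the bridge number is still exactly $b$. We fix $b\geq 4$ and take the infinite family $\{F_n\}_{n\geq1}$ of surface knots in $S^4$ constructed in \cite{sato2022bridge}. These have bridge number $b$ in $S^4$ and are pairwise distinguished by kei coloring numbers $\#\Col_{R_p}(F_n)$ (or by the fundamental kei). We choose a smooth $4$-ball $B^4\subset X$ inside the $0$-handle of a handle decomposition of $X$ without $1$-handles, and let $S_n\subset B^4$ be the image of $F_n$ after isotoping $F_n$ off a point of $S^4$ into a ball. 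Each $S_n$ is local by construction.

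\textbf{Non-isotopy.} By Corollary~\ref{cor:local_surface}, $Q(S_n)\cong Q(F_n)$. Hence $\#\Col_Q(S_n)=\#\Col_Q(F_n)$ for every finite quandle $Q$, and this is an isotopy invariant of $S_n$ in $X$ by Theorem~\ref{thm:qdle_presentation}. So the same coloring numbers that separate the $F_n$ in $S^4$ separate the $S_n$ in $X$. If \cite{sato2022bridge} separates the family by some other quandle-derived invariant, the same transfer applies, since the whole fundamental quandle is preserved.

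\textbf{Bridge number.} We expect this to be the main obstacle, and we establish matching upper and lower bounds.

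\emph{Upper bound.} Start from a $(0;b,c_1,c_2,c_3)$-bridge trisection of $(S^4,F_n)$ and a trisection $\mathcal{T}_X$ of $X$. Form a connected sum of trisections at points away from the surface, as in \cite{meier2018bridge}. This yields a bridge trisection of $(X,S_n)$ in which the surface still meets the central surface in $2b$ points. Hence $b(S_n)\leq b$.

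\emph{Lower bound.} This is where the real work lies. In \cite{sato2022bridge}, the lower bound comes from a coloring inequality of the form $\#\Col_{R_p}(F)\leq p^{\,b(F)-1}$. Their argument reads a kei presentation off a bridge trisection, with generators coming from the $b$ bridge arcs in one handlebody $X_{ij}$. We would redo this argument in $X$. Given a $(g,k_1,k_2,k_3;b',c_1,c_2,c_3)$-bridge trisection of $(X,S_n)$, Proposition~\ref{prop:bridge_trisection} produces a Morse function, and from it a banded unlink diagram with $c_1$ minima and $b'-c_2$ bands. Since $X$ has no $1$-handles, Corollary~\ref{cor:no_1-handle} gives a primary presentation of $Q(S_n)$. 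The $2$-handles of $X$ only add further relations, so they can only decrease the number of colorings. The counting argument of \cite{sato2022bridge} therefore bounds $\#\Col_{R_p}(S_n)$ by the same function of $b'$.

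The delicate point is that the diagram coming from a trisection of $X$ with $g>0$ may contain extra arcs of $L$ passing through the Kirby diagram. We must check that these do not add free generators beyond those counted by the bridge arcs. We would verify this by noting that each component of $L$ is generated by a single meridian modulo crossings, using the argument of Lemma~\ref{lem:=1}. Combining $\#\Col_{R_p}(S_n)=\#\Col_{R_p}(F_n)$, which forces $b'\geq b$ in $S^4$, with this inequality gives $b(S_n)\geq b$, and hence $b(S_n)=b$.
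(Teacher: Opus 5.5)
Your transfer of non-isotopy (Corollary~\ref{cor:local_surface} plus coloring numbers) matches the paper. Your upper bound, via a connected sum of trisections at points away from the surface, is a reasonable way to fill in what the paper leaves implicit. The gap is in the lower bound.

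The inequality you attribute to \cite{sato2022bridge}, $\#\Col_{R_p}(F)\le p^{\,b(F)-1}$, is not theirs, and it is far too weak to force $b'\ge b$. For the family $\tau^{0}(\#^{l}t_{2,q})$ you have $\#\Col_{R_q}=q^{l+1}$ and target $b=3l+1$. From $q^{l+1}\le q^{b'-1}$ you only get $b'\ge l+2$. Counting generators from the $b'$ bridge arcs is no better: it gives at most $\#\Col\le\#K^{b'}$. The bound that is sharp on these examples, and the one the paper's ``same discussion'' relies on (Proposition~\ref{inequality}), is $b(S)\ge 3\log_{\#K}\#\Col_K(S)-\chi(S)$. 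It comes from two facts:
\begin{enumerate}[(1)]
\item restricting colorings to each trivial disk system $D_i$ is injective, so $\#\Col_K(S)\le\#K^{\min\{c_1,c_2,c_3\}}$;
\item $\chi(S)=c_1+c_2+c_3-b$.
\end{enumerate}
Combined with $\#\Col_{R_q}(S_n)=\#\Col_{R_q}(F_n)$, this gives $b(S_n)\ge b$ directly. The factor $3$, coming from using all three sectors, is exactly what your version loses.

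Your justification for carrying the count over to $X$ also fails as stated. The hypothesis only says that $X$ admits some handle decomposition without $1$-handles. The bridge number minimizes over bridge trisections relative to arbitrary trisections of $X$. By Proposition~\ref{prop:bridge_trisection}, the induced Morse function has $k_1$ index-$1$ critical points, and $k_1$ may be positive. So the banded unlink diagram you obtain can contain dotted circles and operator generators $a_j$, and Corollary~\ref{cor:no_1-handle} is not a statement about that diagram. What actually has to be checked is that a coloring is still determined by its values on the meridians of the $c_i$ disks of $D_i$ when such operator generators are present. That is the injectivity that the proof of Proposition~\ref{inequality} asserts for general $X$. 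Lemma~\ref{lem:=1} only propagates an operator relation $x^d\equiv 1$ along a component. It says nothing about generation, so it does not resolve the ``delicate point'' you raise.
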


\begin{proof}
    Take the $4$-ball $B^4$ which contain the surface link $S$ in $X$. Embed this $B^4$ in $S^4$, we denote the surface in $S^4$ by $S'$. By Corollary~\ref{cor:local_surface}, $Q(S)$ and $Q(S')$ are isomorphic, so $\#\Col_K(S)=\#\Col_K(S')$ for any finite kei $K$. Let $S\subset B^4$ be the $0$-twist-spun knot of the connected sum of $(2,q)$-torus knot $\#^{l}t_{2,q}$, the result can be proved by the same discussion as \cite{sato2022bridge}. 
\end{proof}

We further construct explicit examples of surfaces representing non-trivial homology classes in $\CP^2\#m\overline{\CP}^2$, which are in particular not local, by employing a lower bound for the bridge number which depends on the kei coloring number:

\begin{thm}\label{thm:infinitely_many_bridge}
	For any integer $b\geq4$ and $m\geq0$, there exist infinitely many pairwise homologous non-isotopic embedded surfaces with bridge number $b$ in $\CP^2\#m\overline{\CP}^2$.
\end{thm}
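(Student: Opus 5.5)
We will build the surfaces as connected sums $S_l = S^{(l)} \# C$, where $S^{(l)}\subset S^4$ is the family used by Sato and Tanaka (the $0$-twist-spun knots of $\#^{l}t_{2,q}$), and $C$ is a fixed surface in $\CP^2\#m\overline{\CP}^2$ that is not null-homologous. For $C$ we take $\CP^1\subset \CP^2$, viewed inside $\CP^2\#m\overline{\CP}^2$, which represents the generator $h$ of $H_2(\CP^2)$. Its banded unlink diagram is a single unknotted component passing once through the $(+1)$-framed $2$-handle, and the $(-1)$-framed unknots are split from it. Since connected sum with a local knot does not change the homology class, all $S_l$ are pairwise homologous, representing $h$, and none of them is local.

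\textbf{Upper bound.} Starting from a banded unlink diagram of $S^{(l)}$ realizing the Sato--Tanaka bridge number, we form the connected sum with the diagram of $\CP^1$ by a band. Under Proposition~\ref{prop:bridge_trisection} (used in reverse, via the Meier--Zupan construction from banded unlink position), the bridge number should be subadditive up to a constant: $b(S_l)\le b(S^{(l)})+b(\CP^1)-1$. Since $\CP^1$ is $1$-bridge in the genus-$1$ trisection of $\CP^2$, and the $\overline{\CP}^2$ summands contribute trisection genus but no bridges, we get $b(S_l)\le b(S^{(l)})$. We then choose the Sato--Tanaka parameters so that this upper bound equals the target $b$.

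\textbf{Lower bound and distinction.} We compute $Q(S_l)$ via Theorem~\ref{thm:qdle_presentation}. The $\CP^1$ summand behaves exactly like $C_1$ in the example above: it adds no primary generators, and for a surface in $\CP^2$ alone it adds the operator relation $x\equiv 1$. Taking kei colorings into $R_p$, this would kill everything, exactly as in Corollary~\ref{cor:K(S+C_d)} with $d$ odd. That would be fatal, so we must instead use a class of \emph{even} degree. We therefore replace $\CP^1$ by $C_2$ (a conic, class $2h$), or by a surface representing $2h+2e_1$ or a similar class whose $2$-handle relations all read $x^{\mathrm{even}}\equiv1$. By Corollary~\ref{cor:K(S+C_d)} and its analogue for the extra $(-1)$-handles, we then get $K(S_l)\cong K(S^{(l)})$, hence $\#\Col_{R_p}(S_l)=\#\Col_{R_p}(S^{(l)})$. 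Sato and Tanaka bound the bridge number from below by $\log_p$ of the $R_p$-coloring number, using the fact that a $b$-bridge trisection gives a kei presentation with $b$ generators (arcs in one tangle). The same argument applies to any bridge trisection in $\CP^2\#m\overline{\CP}^2$: the additional operator relations only reduce the number of colorings, so $\#\Col_{R_p}(S)\le p^{\,b(S)}$ should still hold. Combining this with the upper bound pins down $b(S_l)=b$. Infinitely many pairwise distinct kei coloring numbers, obtained by varying $l$ and $q$ as in Sato--Tanaka while keeping the bridge count fixed, then give pairwise non-isotopy.

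\textbf{Main obstacle.} The hardest step is the upper bound with an even-degree summand. $C_2$ has genus $0$, but its minimal bridge number in the standard trisection is larger than $1$ (it is $2$-bridge in $\CP^2$), so naive subadditivity raises the bridge count. To absorb this, we would start from Sato--Tanaka examples with bridge number $b-1$ (or lower) and check $b(S^{(l)}\#C_2)\le b$ directly from an explicit banded unlink diagram, counting minima, saddles and maxima via Proposition~\ref{prop:bridge_trisection}. For $b=4$ this requires checking carefully that the base case still fits. If it fails, we will try the class $h+e_1$, where the $\overline{\CP}^2$ handle contributes an extra even-parity relation while the surface remains $1$-bridge.
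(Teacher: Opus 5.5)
\textbf{Gap: the lower bound.} Your architecture matches the paper's: connected sum of the Sato--Tanaka examples with an even-degree curve, invariance of the fundamental kei under that sum, and subadditivity of bridge number for the upper bound. The paper uses $\Sigma_2=C_2\#m\overline{C}_2$ and the $0$-twist spins of $\#^l t_{2,q}$, stabilized by $P$ or $T$. The inequality you invoke, $\#\Col_{R_p}(S)\le p^{\,b(S)}$, is true but only gives $b(S)\ge\log_p\#\Col_{R_p}(S)$. For $\tau^0(\#^l t_{2,q})$ and $R_q$ this is $l+1$, while the bridge number you must certify is $3l+1$. So combining it with your upper bound pins nothing down, and it is not the Sato--Tanaka bound. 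What is missing is counting against all three sectors. For each $i$, a coloring is determined by its restriction to the $c_i$-component unlink bounding the trivial disk system $D_i$. Hence $\#\Col_Q(S)\le(\#Q)^{\min\{c_1,c_2,c_3\}}$, and with $\chi(S)=c_1+c_2+c_3-b$ this yields $b(S)\ge 3\log_{\#Q}\#\Col_Q(S)-\chi(S)$ (Proposition~\ref{inequality}). Summing with $\Sigma_2$ changes neither the kei (even degree) nor $\chi$ (all summands are spheres). So this bound for $S^{(l)}\#\Sigma_2$ equals $b(S^{(l)})$ exactly.

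\textbf{Your ``main obstacle'' and fallback.} The paper uses $b(C_d)=(d-1)(d-2)+1$ (Lambert-Cole--Meier). So $C_2$ and each $\overline{C}_2$ are $1$-bridge, and subadditivity gives $b(S\#\Sigma_2)\le b(S)$ with nothing to absorb (Lemma~\ref{lem:bridge_number}). If $C_2$ really cost an extra bridge, your repair would not work. Starting from examples of bridge number $b-1$, the coloring bound above still gives only $b-1$, since the kei and $\chi$ are unchanged. The fallback class $h+e_1$ fails outright. Its natural representative $\CP^1\#\overline{\CP}^1$ passes once through the $(+1)$-framed $2$-handle, producing the odd relation $x\equiv1$. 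This collapses the kei to a point, as in Corollary~\ref{cor:K(S+C_d)} for odd $d$, and extra relations from the $(-1)$-handle can only remove colorings.

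\textbf{Smaller omissions.} Since $l$ is fixed by $b$, $\tau^0(\#^l t_{2,q})$ alone only reaches $b=3l+1$. You need the $\#P$ and $\#T$ stabilizations to reach $3l+2$ and $3l+3$. To separate the family, vary $q$ and compare each pair $q'<q$ using $R_q$: one surface has $q^{l+1}$ colorings, the other strictly fewer. A single fixed $R_p$ takes only finitely many values on this family, so it cannot give infinitely many pairwise distinct coloring numbers.
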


As a generalization of the inequality established by Sato and Tanaka for surface knots in $S^4$ \cite{sato2022bridge}, we establish the following lower bound for the bridge number of surface knots in arbitrary $4$-manifolds.

\begin{prop}\label{inequality}
	Let $S$ be a surface knot in $X^4$. For any finite quandle $Q$, the following inequality holds:
	$$ b(S)\geq 3\log_{\#Q}(\#\Col_{Q}(S))-\chi(S)$$
\end{prop}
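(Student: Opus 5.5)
Take a bridge trisection of $(X,S)$ realizing $b=b(S)$, with parameters $(g,k_1,k_2,k_3;b,c_1,c_2,c_3)$. Proposition~\ref{prop:bridge_trisection} converts it into a Morse function $h$ on $(X,S)$ for which $h|_S$ has $c_1$ minima, $b-c_2$ saddles and $c_3$ maxima. From this we read off a banded unlink diagram $\D=(\K,L,v)$ of $S$, where $L$ has $c_1$ components and $v$ consists of $b-c_2$ bands. The Euler characteristic then gives
\[
\chi(S)=c_1-(b-c_2)+c_3 .
\]

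The key step is to bound $\#\Col_Q(S)$ from above by means of $\D$, using Theorem~\ref{thm:qdle_presentation}. A coloring $c\colon Q(S)\cong Q(\D)\to Q$ is determined by the colors of the primary generators, since the operator generators act through the operator group and $Q$-colorings only see primary data. Within each component of $L$, the crossing relations propagate a color along the arcs, once the colors of the over-arcs are known. I would therefore not count arcs but use a ``minimal generating set'' argument, as in Sato--Tanaka. The meridians of the $c_1$ minimum disks, that is, one noose per component of $L$, generate $Q(\D)$ modulo the operator group. With care about $1$-handles of $X$, a coloring is determined by at most $c_1$ elements of $Q$, so $\#\Col_Q(S)\le (\#Q)^{c_1}$. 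Running the same argument upside down, with the maxima and the dual decomposition, gives $\#\Col_Q(S)\le (\#Q)^{c_3}$.

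For $c_2$, I would use the third handlebody of the trisection: the surface restricted to the middle picture is a trivial disk system with $c_2$ disks in $X_2$. Cyclically permuting the roles of $X_1,X_2,X_3$ in Proposition~\ref{prop:bridge_trisection}, which is legitimate since bridge trisections are symmetric, produces a Morse function with $c_2$ minima. This gives $\#\Col_Q(S)\le (\#Q)^{c_i}$ for each $i$. Summing the three bounds yields
\[
3\log_{\#Q}\#\Col_Q(S)\le c_1+c_2+c_3=b+\chi(S),
\]
which is the claimed inequality.

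The main obstacle is the upper bound $\#\Col_Q(S)\le(\#Q)^{c_i}$ in a general $X$. There the operator generators $\mathbf a$ coming from $1$-handles of $X$, and the arcs of the $2$-handle link $L_2$, also pass over $L$. Consequently, colors along a component of $L$ are changed by operators that are not themselves colored elements. My first attempt is to argue topologically instead of diagrammatically. Any noose is homotopic, by Claim~\ref{claim:N}, to $[N_0]^{[\alpha]}$ with $N_0$ a meridian of some minimum disk, and the disks' meridians $m_1,\dots,m_{c_i}$ satisfy the following: every element of $Q(S)$ has the form $m_j^{g}$ with $g\in G(S)$. One must then show that $G(S)$ acts on $Q$ through colors, i.e.\ that the induced action factors through the image of the primary generators. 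This should follow because the operator group of the quandle $Q(S)$, rather than $G(S)$ itself, is what acts. I expect this factoring step to need the most care.
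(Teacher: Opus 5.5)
The overall plan matches the paper's proof. You use Proposition~\ref{prop:bridge_trisection} to get a Morse function and $\chi(S)=c_1+c_2+c_3-b$. You then bound $\#\Col_Q(S)\le(\#Q)^{c_i}$ for each sector by cyclically permuting the trisection. Summing the three bounds is equivalent to the paper's use of $\min\{c_1,c_2,c_3\}$.

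The gap is the bound $\#\Col_Q(S)\le(\#Q)^{c_i}$, which is the only step with content, and neither justification you sketch establishes it. Knowing that every element of $Q(S)$ has the form $m_j^{g}$ with $g\in G(S)$ (generation ``modulo the operator group'') gives no bound. Every knot quandle has this form with a single meridian, yet colorings are not determined by one color: the spun trefoil has $9$ colorings by $R_3$. What you need is that $G(S)$ is generated \emph{as a group} by $\partial(m_1),\dots,\partial(m_{c_i})$. Then $m_j^{g}=m_j^{w(m_1,\dots,m_{c_i})}$ is an iterated quandle operation, so $Q(S)$ is generated as a quandle by $c_i$ nooses and the bound follows. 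Your claim that colorings ``only see primary data'' is also false once $X$ has $1$-handles. An element $x^{a}$ with $a\in\mathbf a$ need not lie in the subquandle generated by $\mathbf x$, and a quandle homomorphism can color it independently. So the action of $G(S)$ does not factor through the colors of the primary generators.

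In fact this step cannot be carried out in the stated generality. Suppose $\pi=\pi_1(X)\neq1$ and $U$ is an unknotted $2$-sphere in a small ball in $X$. Then $G(U)\cong\pi*\Z$, and the normal closure of the meridian $m$ is free on the conjugates $h^{-1}mh$ for $h\in\pi$. Hence $Q(U)$ is the free quandle on $|\pi|$ generators, so $\#\Col_Q(U)=(\#Q)^{|\pi|}$, which is infinite for $X=S^1\times S^3$. On the other hand, a distant sum with the $1$-bridge unknotted sphere in $S^4$ puts $U$ in bridge position with $b=1$ and $c_1=c_2=c_3=1$. So your intermediate bound fails there, and so does the stated inequality.

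What does work is the following. For the Morse function of Proposition~\ref{prop:bridge_trisection}, let $E(D_1)$ be the exterior of the minimum disks in $X_1$, so $E(D_1)\simeq\bigvee^{k_1+c_1}S^1$. The exterior $E(S)$ is obtained from $E(D_1)$ by attaching handles of index $\ge2$, so $\pi_1(E(D_1))\to G(S)$ is onto. If $k_1=0$, then $G(S)$ is generated by the $c_1$ minimum meridians and the bound follows. Running this over all three sectors requires $k_1=k_2=k_3=0$. This holds for the standard trisections of $\CP^2\#m\overline{\CP}^2$ used later, but as written the argument only controls bridge trisections relative to such trisections.

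The paper's proof asserts the corresponding injectivity $\Col_Q(S)\hookrightarrow\Col_Q(L_{c_i})$ without this hypothesis, so it has the same hole. You located it correctly, but it can only be closed under an extra assumption of this kind.
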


\begin{proof}
    By Proposition~\ref{prop:bridge_trisection}, there exists a Morse function with $c_1$ index-$0$ critical points, $b(S)-c_2$ index-$1$ critical points, and $c_3$ index-$2$ critical points, which induces a banded unlink diagram $\D$ of $S$. 
    Consider the $Q$-colorings $\Col_Q(\D)=\Col_Q(S)$. The inclusions of the minimal and maximal disk systems into the banded unlink diagram $\D$ induce injective maps on $Q$-colorings:
    $$\phi_{min}:\Col_Q(S)\to \Col_Q(L_{c_1})\quad \phi_{max}:\Col_Q(S)\to \Col_Q(L_{c_3}),$$ 
    where $L_{c_i}$ is the $c_i$-component classical unlink. It follows that $\#Q^{\min\{c_1,c_3\}}\geq \#\Col_Q(S)$. By permuting the roles of the three sectors in the bridge trisection, the same argument applies to each pair of indices, and hence $\#Q^{\min\{c_1,c_2,c_3\}}\geq \#\Col_Q(S)$. 
    
    On the other hand, considering the cell decomposition induced by $h$, we have $\chi(S)=c_1+c_2+c_3-b(S)$. It follows that 
    $$b(S) = c_1+c_2+c_3-\chi(S) \geq 3\min\{c_1,c_2,c_3\}-\chi(S) \geq 3\log_{\#Q}(\#\Col_Q(S))-\chi(S).$$
\end{proof}

We construct a surface $\Sigma_d$ in $\CP^2\#m\overline{\CP}^2$ as follows for any integer $p\geq 0$. Take a degree-$d$ algebraic curve $C_d$ in $\CP^2$ and its orientation-reversed copy $\overline{C_d}$ in $\overline{\CP}^2$. When forming the connected sum $\CP^2\#\overline{\CP}^2$, we simultaneously performing a band sum between $C_d$ and $\overline{C}_d$. Denote the resulting surface $C_d\#m\overline{C}_d$  by $\Sigma_d$. The banded unlink diagram of $\Sigma_2$ is shown as Figure~\ref{fig:Sigma_d} bellow. 

\begin{figure}[H]
    \centering
    \includegraphics[width=0.65\linewidth]{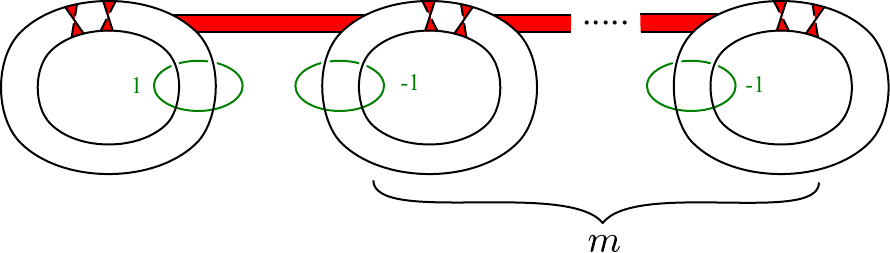}
    \caption{Banded unlink diagram of $\Sigma_2$}
    \label{fig:Sigma_d}
\end{figure}

\begin{lem}\label{lem:coloring_of_cntsum_with_cpxcurve}
	Let $S$ be a surface knot in $S^4$, and let $K$ be a finite kei. Then
	$$
	\Col_{K} (S\#\Sigma_d)=
	\begin{cases}
		\Col_{K}(S) & d\ \text{is even}\\ \\
		\{\text{constant coloring}\} & d\ \text{is odd}
	\end{cases}
	$$
\end{lem}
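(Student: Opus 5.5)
The plan is to compute $Q(S\#\Sigma_d)$ from a banded unlink diagram, using Theorem~\ref{thm:qdle_presentation}, and then count kei colorings directly from the presentation.

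\emph{Step 1: the diagram.} Take a banded unlink diagram $\D_S$ of $S$ in $S^4$; it uses the empty Kirby diagram. Take the diagram of $\Sigma_d$ in $\CP^2\#m\overline{\CP}^2$ built from copies of the diagram in Figure~\ref{fig:cpx_curve}, one $(+1)$-framed unknot carrying $d$ strands and $m$ $(-1)$-framed unknots each carrying $d$ strands, joined by bands as in Figure~\ref{fig:Sigma_d}. The connected sum $S\#\Sigma_d$ is realized by placing the two diagrams side by side and joining one arc $x_1$ of $\D_S$ to one arc of $\Sigma_d$ by a band that crosses nothing. The Kirby diagram has no $1$-handles, so by Corollary~\ref{cor:no_1-handle} the presentation has primary generators only.

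\emph{Step 2: collapse the $\Sigma_d$ part.} In the diagram of $C_d$ the bands give $x_i=x_{i+1}$, so all arcs of $C_d$ are identified. The same holds for each $\overline{C}_d$, and the connecting bands identify all of these with a single generator. The band to $S$ then identifies this generator with $x_1$.

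\emph{Step 3: the operator relations.} The $2$-handle attaching circles now read $x_1^{\pm d}\equiv 1$. By Lemma~\ref{lem:=1}, and because $S$ is a knot (connected), this gives $x_i^d\equiv 1$ for every primary generator. Hence
\[
Q(S\#\Sigma_d)\cong[\,\mathbf{x}\mid R_P(S),\ x_i^d\equiv 1\ \forall i\,].
\]
This is exactly the argument of Corollary~\ref{cor:K(S+C_d)}, with the $m$ extra $\overline{\CP}^2$ summands contributing only further copies of the same relation.

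\emph{Step 4: count colorings.} A $K$-coloring is a map of generators satisfying $R_P(S)$ together with the condition that $f_{c(x_i)}^d=\mathrm{id}$ on $K$.
\begin{itemize}
\item If $d$ is even, this condition holds automatically, since $f_y^2=\mathrm{id}$ in a kei. Hence $\Col_K(S\#\Sigma_d)=\Col_K(S)$.
\item If $d$ is odd, the condition reduces to $f_{c(x_i)}=\mathrm{id}$, so $z^{c(x_i)}=z$ for all $z$. Each crossing relation $x_{k}=x_j^{x_l}$ then forces $c(x_k)=c(x_j)$. Since $S$ is connected, all arcs receive the same color, so only constant colorings survive.
\end{itemize}
The statement's "$\{\text{constant coloring}\}$" should be read as the set of constant colorings, of cardinality $\#K$. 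Constant colorings do satisfy every relation, because $x^x=x$.

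The main obstacle is Step~2: justifying the diagram for $\Sigma_d$ when $m\ge 1$, in particular that the bands joining $C_d$ to the $\overline{C}_d$'s across the connected sum neither pass under anything nor introduce other relations. The remaining steps are formal consequences of Theorem~\ref{thm:qdle_presentation}, Lemma~\ref{lem:=1}, and the kei axiom.
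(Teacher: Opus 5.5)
Your argument is correct in outline and is essentially the paper's. The paper asserts that the $\overline{C}_d$ summands add no new relations, so $K(S\#\Sigma_d)\cong K(S\#C_d)$, and then invokes Corollary~\ref{cor:K(S+C_d)}. The content of that corollary (append $x_i^d\equiv 1$, then use $x^{yy}=x$) is exactly your Steps~2--4; the only difference is that you count colorings directly instead of first identifying the fundamental kei. The diagrammatic point you flag as the main obstacle is likewise only asserted in the paper, not proved.

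One slip in Step~4 needs fixing. A quandle homomorphism $c$ only transports the operator relation to $c(z)^{c(x_i)^d}=c(z)$ for $z$ in the domain. So the requirement is that $f_{c(x_i)}^d$ be the identity on the image of $c$, not on all of $K$.

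Read literally, your condition would leave no colorings at all for $d$ odd and $K=R_3$, since no $f_y$ is trivial on $R_3$. That contradicts your own check that constant colorings survive, which uses $x^x=x$ and so only verifies the image condition. With the image version, each crossing relation (and each band relation, which you also need to connect different components of $L$) still forces $c(x_k)=c(x_j)$. The rest of your argument then goes through unchanged.
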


\begin{proof}
    Since taking connected sum of degree-$d$ algebraic curves and its mirroring does not change the relation that gives from the connected sum, $K(S\#\Sigma_d)$ and $K(S\#C_d)$ are isomorphic for every $d$. By Corollary~\ref{cor:K(S+C_d)}, when $d$ is even, $K(S\#\Sigma_d)$ is isomorphic to $K(S)$, so $\Col_{K}(S\#\Sigma_d)=\Col_{K}(S)$; when $d$ is odd, $K(S\#\Sigma_d)$ is isomorphic to $\langle x\rangle$, so $\Col_{K}(S\#\Sigma_d)$ is the constant coloring.
\end{proof}

\begin{lem}\label{lem:bridge_number}
	Let $S$ be a surface knot in $S^4$ satisfying $b(S)= 3\log_{\#K}(\#\Col_K(S))-\chi(S)$. Then $b(S\#\Sigma_d)=b(S)+(m+1)(d-1)(d-2)$ holds when $d$ is even.
\end{lem}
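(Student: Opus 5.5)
The plan is to squeeze $b(S\#\Sigma_d)$ between a lower bound from Proposition~\ref{inequality} and an upper bound from an explicit bridge trisection. Both bounds should come out equal to $b(S)+(m+1)(d-1)(d-2)$.

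\emph{Lower bound.} Since $d$ is even, Lemma~\ref{lem:coloring_of_cntsum_with_cpxcurve} gives $\#\Col_K(S\#\Sigma_d)=\#\Col_K(S)$. Next we compute the Euler characteristic of $\Sigma_d$.
\begin{itemize}
\item By the genus formula, each copy of $C_d$ or $\overline{C}_d$ has genus $(d-1)(d-2)/2$.
\item Each connected sum, whether of surfaces or via band sum, lowers $\chi$ by $2$ and adds genus.
\item Hence $\Sigma_d$, built from $m+1$ curves, has genus $(m+1)(d-1)(d-2)/2$.
\item Therefore $\chi(S\#\Sigma_d)=\chi(S)-(m+1)(d-1)(d-2)$.
\end{itemize}
Plugging this into Proposition~\ref{inequality} with the same kei $K$ gives
\[
b(S\#\Sigma_d)\ \ge\ 3\log_{\#K}\#\Col_K(S)-\chi(S)+(m+1)(d-1)(d-2)=b(S)+(m+1)(d-1)(d-2).
\]
The last equality is the hypothesis on $S$.

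\emph{Upper bound.} We need a bridge trisection of $S\#\Sigma_d$ with exactly that many bridges. The natural route is additivity-type behaviour under connected sum. First, a minimal bridge trisection of $S$ in $S^4$ and a bridge trisection of $(\CP^2\#m\overline{\CP}^2,\Sigma_d)$ combine by the standard connected-sum operation of \cite{meier2018bridge} (for pairs). This yields $b(S\#\Sigma_d)\le b(S)+b(\Sigma_d)-1$.

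It then suffices to show $b(\Sigma_d)\le (m+1)(d-1)(d-2)+1$. For this I would count critical points in the banded unlink diagram of Figure~\ref{fig:Sigma_d} and its generalization.
\begin{itemize}
\item $C_d$ contributes $d$ minima, $d(d-1)/2$ bands, and $d$ maxima in the $2$-handle. Converting into bridge position via Proposition~\ref{prop:bridge_trisection} requires $b=c_2+\#\text{saddles}$, together with the relation $\chi=c_1+c_2+c_3-b$.
\item Better, since $c_i\ge 1$, a direct computation gives $b=c_1+c_2+c_3-\chi$. So we want a trisection of $\Sigma_d$ with $c_1=c_2=c_3=1$ after cancelling minima and maxima against bands.
\item With all $c_i=1$, this gives $b=3-\chi(\Sigma_d)=1+(m+1)(d-1)(d-2)$, exactly what is needed.
\end{itemize}

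The main obstacle is this last step: arranging a banded unlink diagram of $\Sigma_d$, compatible with the Kirby diagram of $\CP^2\#m\overline{\CP}^2$, that yields a bridge trisection with one disk in each sector. Equivalently, the obstacle is checking carefully how the $(d,d)$-torus link parts in each $2$-handle are tubed together without extra bridges.

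I would first try band-move simplification of the Figure~\ref{fig:Sigma_d} diagram to cancel minima with bands. If that fails, I would instead apply the lower bound together with $c_i\ge1$ directly to the trisection of $S\#\Sigma_d$ obtained from the minimal one for $S$, tracking the $c_i$ through the connected sum.
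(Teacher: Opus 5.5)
Your lower bound is exactly the paper's argument and is correct. For even $d$, Lemma~\ref{lem:coloring_of_cntsum_with_cpxcurve} gives $\#\Col_K(S\#\Sigma_d)=\#\Col_K(S)$, and $\chi(S\#\Sigma_d)=\chi(S)-(m+1)(d-1)(d-2)$; Proposition~\ref{inequality} together with the hypothesis on $S$ then finishes it. Reducing the upper bound to $b(\Sigma_d)\le (m+1)(d-1)(d-2)+1$ via $b(A\#B)\le b(A)+b(B)-1$ is also what the paper does.

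\textbf{The gap.} You never prove this bound on $b(\Sigma_d)$. You correctly see that it is equivalent to finding a bridge trisection of $\Sigma_d$ with $c_1=c_2=c_3=1$, but you leave that open. Your fallback cannot close it. Proposition~\ref{inequality} and $c_i\ge 1$ only ever give \emph{lower} bounds on $b$. Tracking the $c_i$ through the connected sum of a minimal trisection of $S$ with some trisection of $\Sigma_d$ just brings you back to needing the $c_i$ of a trisection of $\Sigma_d$.

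The by-hand route via band moves is also harder than you suggest. The diagram of Figure~\ref{fig:cpx_curve} has $d$ minima, $d(d-1)$ bands and $d$ maxima. There are two bands per double point, giving the surface $F_{d,d}$; your count $d(d-1)/2$ would give the wrong Euler characteristic, e.g.\ $3$ instead of $2$ for the conic. So converting that diagram directly puts $d$ disks in the first and third sectors, which is far from efficient.

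\textbf{The missing ingredient} is the result the paper cites from \cite{lambert2022bridge}: $b(C_d)=(d-1)(d-2)+1$. Equivalently, $C_d$ admits an efficient bridge trisection with one disk in each sector. Reversing orientation gives the same for $\overline{C}_d\subset\overline{\CP}^2$. The connected sum of bridge trisections has $b=b+b'-1$ and $c_i=c_i+c_i'-1$. Iterating over the $m+1$ summands of $\Sigma_d$ therefore gives a trisection with all $c_i=1$, and
\[
b(\Sigma_d)\le (m+1)\bigl((d-1)(d-2)+1\bigr)-m=(m+1)(d-1)(d-2)+1 .
\]
So the trisection you were looking for does exist. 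Combining with the minimal trisection of $S$ yields $b(S\#\Sigma_d)\le b(S)+(m+1)(d-1)(d-2)$, which closes the squeeze.
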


\begin{proof}
	The bridge number of a degree-$d$ algebraic curve is $(d-1)(d-2)+1$
	\cite{lambert2022bridge}.
	Taking connected sum of $S$ with $\Sigma_d$, we obtain
	\begin{equation}\label{eq:bridge_upper_bound}
		b(S\#\Sigma_d)\leq b(S)+(m+1)(d-1)(d-2).
	\end{equation}
	On the other hand, since
	$$b(S)= 3\log_{\#K}(\#\Col_K(S))-\chi(S)$$
	and $d$ is even, Lemma~\ref{lem:coloring_of_cntsum_with_cpxcurve} implies
    \begin{equation}\label{eq:bridge_lower_bound}
        \begin{aligned}
        	b(S\#\Sigma_d)
        	&\geq 3\log_{\#K}(\#\Col_K(S\#\Sigma_d))-\chi(S)+(m+1)(d-1)(d-2) \\
        	&= b(S)+(m+1)(d-1)(d-2).
        \end{aligned}
    \end{equation}
	By combining \eqref{eq:bridge_upper_bound} and
	\eqref{eq:bridge_lower_bound}, we conclude that
	$$b(S\#\Sigma_d)=b(S)+(m+1)(d-1)(d-2).$$
\end{proof}

For a classical knot $k$ in $S^3$, let $\tau^{n}(k)$ denote the $n$-twist spin of $k$,
introduced by Zeeman~\cite{zeeman1965twisting}.
It is obtained by removing a small open ball from $S^3$ so that $k$ becomes a properly embedded arc, spinning this arc around the boundary $S^2$ in $S^4$, and inserting $n$ full twists along the meridional direction during the spinning process. The resulting surface $\tau^{n}(k)$ is a smoothly embedded $2$-sphere in $S^4$.
We next consider the connected sums of the twist-spun knots in $S^4$ and $\Sigma_d$.

In \cite{sato2022bridge}, the bridge numbers of twist-spun knots were computed:

\begin{thm}[\cite{sato2022bridge}]\label{thm:bg_num_tsp}
    Let $k$ be a classical knot in $S^3$, and let $t_k$ be the tangle diagram obtained by removing an arc from $k$.
    Let $P$ denote the real projective plane $\mathbb{RP}^2$, and let $T$ denote the torus in $S^4$.
    Suppose there exists a finite kei $K$ with $b(k)=\log_{\#K}(\# \Col_K(t_k))$, then for any $m$ in $ \Z$, the following equations hold:
	\begin{align*}
		&b(\tau^{2m}(k))=3b(k)-2,\\
		&b(\tau^{2m}(k)\#P)=3b(k)-1,\\
		&b(\tau^{2m}(k)\#T)=3b(k).
	\end{align*}
\end{thm}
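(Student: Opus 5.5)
The statement is a theorem of Sato and Tanaka cited from \cite{sato2022bridge}, so within this paper it is used as a black box. If I were to prove it, I would squeeze each bridge number between a lower bound from kei colorings and an upper bound from an explicit bridge trisection.

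\emph{Lower bound.} Let $F$ denote $\tau^{2m}(k)$, $\tau^{2m}(k)\#P$ or $\tau^{2m}(k)\#T$; these have $\chi(F)=2,1,0$ respectively. I would use Proposition~\ref{inequality}, which in $S^4$ is exactly the Sato--Tanaka inequality $b(F)\geq 3\log_{\#K}(\#\Col_K(F))-\chi(F)$.

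The key input is that $\Col_K(\tau^{2m}(k))$ is in bijection with $\Col_K(t_k)$ when $K$ is a kei. To see this, recall that the fundamental quandle of the $n$-twist spin is the knot quandle of $k$ modulo the relation $x^{y^n}=x$ for all $y$. Equivalently, one can read off a banded unlink diagram and apply Theorem~\ref{thm:qdle_presentation}. Since $K$ is a kei, $x^{yy}=x$ holds automatically, so the relation imposes nothing when the twisting is even. Spinning a tangle gives colorings of the tangle $t_k$, that is, of the arc diagram with its two endpoints free. The open arc colorings therefore match.

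Adding $P$ or $T$ by connected sum does not change the kei: the standard $P$ and the unknotted $T$ have trivial kei, so coloring numbers are preserved. Substituting the hypothesis $\log_{\#K}\#\Col_K(t_k)=b(k)$ then gives lower bounds $3b(k)-2$, $3b(k)-1$ and $3b(k)$.

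\emph{Upper bound.} I would construct a bridge trisection directly. Take a $b(k)$-bridge presentation of $k$ and cut one bridge, so that $t_k$ has $b(k)-1$ minima on the arc picture. Spinning gives a banded unlink diagram with $b(k)$ minima, $b(k)$ maxima and $b(k)-1$ bands, whose Euler characteristic is $2$. In the form of Meier--Zupan this yields $b=c_1+c_2+c_3-\chi=3b(k)-2$. Even twisting can be realized in the same diagram, since twist spinning only modifies the bands. Connected sum with $P$ or $T$ raises the bridge number by $1$ or $2$, because $b(P)=2$ and $b(T)=3$ and bridge number is subadditive with correction $-1$. The constructed values then meet the lower bounds.

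I expect the main obstacle to be the upper-bound construction for $\tau^{2m}(k)$: checking that the twist-spun knot really admits a bridge trisection with $c_i=b(k)$ in all three sectors, and not merely in two of them. I would first try Meier's explicit bridge trisections of spun knots and modify the band region for twisting.
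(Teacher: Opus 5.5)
Your lower bound follows the standard route, but the upper bound has a genuine gap, at exactly the step you flagged, and that step is the whole content of this half.

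\textbf{The upper bound.} First, the count is inconsistent. With $b(k)$ minima, $b(k)-1$ bands and $b(k)$ maxima you get $\chi=b(k)-(b(k)-1)+b(k)=b(k)+1$, which equals $2$ only when $b(k)=1$. What you have counted is essentially the ribbon disk $t_k\times I\subset B^4$. The spun sphere is the double of that disk, so with $c_1=c_3=b(k)$ it needs $2b(k)-2$ bands: $b(k)-1$ fusions followed by $b(k)-1$ fissions.

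Second, and more seriously, $b=c_1+c_2+c_3-\chi$ is an identity that holds for \emph{every} bridge trisection. It gives $3b(k)-2$ only once you exhibit a trisection with $c_2=b(k)$ as well. That means a banded bridge splitting in which the unlink sits in exactly $(3b(k)-2)$-bridge position and the $2b(k)-2$ bands are dual to bridge disks. A banded unlink diagram with $b(k)$ minima and $b(k)$ maxima does not give this for free. Converting a banded unlink diagram into a bridge trisection generally requires perturbations, which raise $b$ and $c_2$. Your claim that ``twist spinning only modifies the bands'' has the same defect: nothing shows that the $2m$ full twists can be inserted without adding bridges. Since the kei bound forces $c_1=c_2=c_3=b(k)$ in any trisection realizing $3b(k)-2$, this balanced trisection is precisely what must be built.

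The missing input is Meier--Zupan's explicit $(3b(k)-2;b(k),b(k),b(k))$-bridge trisection (a tri-plane diagram) of $\tau^n(k)$. It is constructed from a $b(k)$-bridge splitting of $k$ and works for every $n$, and I believe it is the upper bound the cited theorem rests on. Given it, your treatment of $\#P$ and $\#T$ is fine, using $b(F_1\#F_2)\le b(F_1)+b(F_2)-1$, $b(P)=2$ and $b(T)=3$.

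\textbf{A smaller point in the lower bound.} $Q(\tau^n(k))$ is not $Q(k)$ modulo $x^{y^n}=x$. Already for $n=0$, the broken surface diagram of the spun knot is the rotated diagram of the $1$-tangle. So $Q(\tau^0(k))$ is presented by the arcs and crossings of $t_k$, with the two end arcs \emph{not} identified. For a general finite kei, $\Col_K(t_k)$ can be strictly larger than $\Col_K(k)$, which is why the hypothesis is phrased with $t_k$. Taken literally, your description would therefore compute the wrong coloring set.

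What you need instead is the correspondence between $K$-colorings of $\tau^n(k)$ and $K$-colorings $c$ of $t_k$ satisfying $c(x)^{c(y)^n}=c(x)$ for all arcs $x,y$. For a kei and even $n$ this condition is vacuous. That gives $\#\Col_K(\tau^{2m}(k))=\#\Col_K(t_k)$, which is the equality you then correctly use.
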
 

Applying Lemma~\ref{lem:bridge_number}, we have the next corollary: 
\begin{cor}\label{cor:bridge_num_tau2mdk}
    Let $\tau^{2m}_d(k)$ denote the connected sum $\tau^{2m}(k)\#\Sigma_d$. Then we have 
    \begin{align*}
        &b(\tau^{2n}_d(k))=3b(k)-2+(m+1)(d-1)(d-2)\\
        &b(\tau^{2n}_d(k)\#P)=3b(k)-1+(m+1)(d-1)(d-2)\\
        &b(\tau^{2n}_d(k)\#T)=3b(k)+(m+1)(d-1)(d-2).
    \end{align*}
\end{cor}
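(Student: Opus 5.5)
The plan is to apply Lemma~\ref{lem:bridge_number} to the three surface knots $\tau^{2m}(k)$, $\tau^{2m}(k)\#P$ and $\tau^{2m}(k)\#T$ in $S^4$, taking $d$ even (the statement is only meaningful for even $d$, since that is the range of Lemma~\ref{lem:bridge_number}). The twist index in the statement should be read as the one in Theorem~\ref{thm:bg_num_tsp}, and the coefficient $(m+1)$ as coming from $\Sigma_d$. Lemma~\ref{lem:bridge_number} has one hypothesis: the surface $S$ must realize equality in Proposition~\ref{inequality},
\[
b(S)=3\log_{\#K}(\#\Col_K(S))-\chi(S),
\]
for some finite kei $K$. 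Once this holds, the lemma gives $b(S\#\Sigma_d)=b(S)+(m+1)(d-1)(d-2)$. Substituting the values from Theorem~\ref{thm:bg_num_tsp} then yields the three displayed formulas immediately. For the second and third formulas I would also use that connected sum is commutative and associative up to isotopy, so that $\tau^{2m}_d(k)\#P=(\tau^{2m}(k)\#P)\#\Sigma_d$, and similarly for $T$.

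The main step is checking the equality hypothesis, and I would do this by recalling how Sato and Tanaka prove Theorem~\ref{thm:bg_num_tsp}. Let $K$ be the finite kei with $b(k)=\log_{\#K}(\#\Col_K(t_k))$. For an even twist spin, the $K$-colorings of $\tau^{2m}(k)$ correspond bijectively to colorings of the tangle $t_k$, because the even twisting acts trivially on a kei. So $\#\Col_K(\tau^{2m}(k))=\#K^{b(k)}$. Adding a trivial $P$ or $T$ summand does not change the kei colorings, since the fundamental kei is unchanged by a trivial connected summand. Since $\chi=2,1,0$ for the three surfaces, the lower bound of Proposition~\ref{inequality} evaluates to $3b(k)-2$, $3b(k)-1$ and $3b(k)$ respectively, which are exactly the bridge numbers in Theorem~\ref{thm:bg_num_tsp}. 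So equality holds in each case.

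The expected obstacle is making the coloring count rigorous. It requires the identification $\Col_K(\tau^{2m}(k))\cong\Col_K(t_k)$, which I would take from \cite{sato2022bridge}. It also requires that Lemma~\ref{lem:coloring_of_cntsum_with_cpxcurve} applies to non-orientable $S$ such as $\tau^{2m}(k)\#P$. That lemma rests on Corollary~\ref{cor:K(S+C_d)}, and its argument (adding the operator relations $x_i^d\equiv1$, which are automatic in a kei when $d$ is even) does not use orientability, so it should extend. With these two points in place, the corollary is a direct substitution.
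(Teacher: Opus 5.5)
Your proposal is correct and takes the paper's route: the paper obtains the corollary by applying Lemma~\ref{lem:bridge_number} to $\tau^{2m}(k)$, $\tau^{2m}(k)\#P$ and $\tau^{2m}(k)\#T$, with the bridge numbers supplied by Theorem~\ref{thm:bg_num_tsp}. The paper leaves implicit both the check of the equality hypothesis, via $\#\Col_K(\tau^{2m}(k))=\#K^{b(k)}$ and the invariance of the fundamental kei under trivial $P$ and $T$ summands, and the restriction to even $d$; you state both explicitly, which adds detail without changing the argument.
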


\begin{lem}\label{lem:coloring_of_t_p,q}
        Let $q,q'$ be two integers such that $q'<q$. Then we have the following inequality:
        $$\#\Col_{R_q}(\#^{l}t_{2,q'})< q^{l+1}.$$
    \end{lem}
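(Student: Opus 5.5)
The plan is to compute the coloring number exactly and then compare it with $q^{l+1}$. Throughout I assume, as in the intended application, that $q'$ is a positive (odd) integer with $q'<q$ and $l\geq 1$, so that $t_{2,q'}$ is a genuine torus knot and $q\nmid q'$.

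First I treat a single summand. Take the standard closed $2$-braid diagram of $t_{2,q'}$, which has $q'$ crossings and $q'$ arcs $y_0,\dots,y_{q'-1}$, indices read mod $q'$. Each crossing relation has the form $y_{i+1}=y_{i-1}^{y_i}$. In $R_q$ this reads $c_{i+1}=2c_i-c_{i-1}$, so the colors form an arithmetic progression $c_i=a+i\,\delta$ in $\Z/q\Z$. The progression closes up consistently exactly when $q'\delta\equiv 0 \pmod q$. The parameter $a$ can be any of the $q$ elements, and $\delta$ ranges over the solutions of $q'\delta\equiv 0$, of which there are $\gcd(q,q')$. Hence
\[
\#\Col_{R_q}(t_{2,q'})=q\cdot\gcd(q,q').
\]

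Next I handle the connected sum $\#^{l}t_{2,q'}$. Form it by cutting each summand at one arc and joining the summands in a chain, so each junction identifies one arc of one summand with one arc of the next. A coloring of the sum is the same thing as a family of colorings of the summands that agree on the glued arcs. Since $R_q$ is homogeneous (translations $x\mapsto x+s$ are automorphisms), the colorings of each summand with a prescribed color on a fixed arc number exactly $\gcd(q,q')$. Choosing the color of the first arc freely and then extending summand by summand gives
\[
\#\Col_{R_q}(\#^{l}t_{2,q'})=q\cdot\gcd(q,q')^{l}.
\]

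Finally, $1\leq q'<q$ gives $\gcd(q,q')\leq q'<q$, so $\gcd(q,q')^{l}<q^{l}$ for $l\geq1$, and the inequality follows.

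The main point needing care is the counting for the connected sum. One must check that fixing the color on the gluing arc gives exactly $\#\Col/q$ extensions for each summand. I would justify this by the translation action of $\Z/q\Z$ on $R_q$, which acts freely and transitively on the possible colors of a chosen arc and permutes the colorings accordingly.
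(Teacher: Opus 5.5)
Your proof is correct and takes essentially the paper's route: it computes $\#\Col_{R_q}(t_{2,q'})=q\gcd(q,q')$ from the standard $2$-braid diagram, then applies the connected-sum rule that divides the product of coloring numbers by $q$. The paper simply quotes that rule, whereas you derive it from the translation automorphisms of $R_q$. Your added hypotheses $1\le q'$ and $l\geq 1$ are genuinely needed, since for instance $q'=-q$ or $l=0$ gives equality $q^{l+1}$; the paper's statement leaves these hypotheses implicit.
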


    \begin{proof}
    A direct calculation on the standard diagram of the $(2,q')$-torus knot shows that
    $$\#\Col_{R_q}(t_{2,q'})=q\,\gcd(q,q')<q^2.$$
    By using the formula  of the connected sum 
    \begin{equation}\label{eq:formula _connected_sum}
        \#\Col_{R_q}(k_1\#k_2)=\frac{\#\Col_{R_q}(k_1)\,\#\Col_{R_q}(k_2)}{q},
    \end{equation}
    where $k_1$ and $k_2$ are classical knots in $S^3$, we obtain
    $$\#\Col_{R_q}(\#^{l}t_{2,q'})
    =\frac{\bigl(\#\Col_{R_q}(t_{2,q'})\bigr)^l}{q^{l-1}}
    < q^{l+1}.$$
    \end{proof}

\begin{proof}[Proof of Theorem~\ref{thm:infinitely_many_bridge}]
	We take $d=2$, $m=0$. Let the classical knot be $\#^{l}t_{2,q}$, where $t_{2,q}$ denotes the $(2,q)$-torus knot and $q$ is an integer. 
    According to \cite{asami2009colorings}, $\#\Col_{R_q}(t_{2,q})= q^2$
    Using formula~\eqref{eq:formula _connected_sum}, we obtain that $\#\Col_{R_q}(\#^lt_{2,q})=q^{l+1}$. It follows that 
    $$b(\#^lt_{2,q})=\log_{\#{R_q}}\bigl(\#\Col_{R_q}(\#^lt_{2,q})\bigr)=l+1.$$
	By Corollary~\ref{cor:bridge_num_tau2mdk}, we have 
    \begin{align*}
        &b(\tau^0_2(\#^{l}t_{2,q}))=3l+1,\\
        &b(\tau^0_2(\#^{l}t_{2,q})\#P)=3l+2,\\
        &b(\tau^0_2(\#^{l}t_{2,q})\#T)=3l+3.
    \end{align*}
	For each $l$, we have 
    \begin{equation}\label{eq:col_twist=origin}
        \#\Col_{R_{q}}(\tau^{0}_{2}(\#^lt_{2,q}))=\#\Col_{R_{q}}(\#^lt_{2,q})=q^{l+1}.
    \end{equation}
    Take integer $q'$ with $q'<q$. According to Lemma~\ref{lem:coloring_of_t_p,q}, 
    \[
    \#\Col_{R_q}(\tau^0_d(\#^{l}t_{2,q'}))=\#\Col_{R_q}(\#^{l}t_{2,q'})< q^{l+1},
    \]
    so $\tau^0_d(\#^{l}t_{2,q})$ and $\tau^0_d(\#^{l}t_{2,q'})$ are not isotopic.
    Then we consider the cases of the connected sums with the projective plane $P$ and the torus $T$. 
    Consider the banded unlink diagrams of $P$ and $T$ shown in Figure~\ref{fig:P_and_T}. 
    
    The standard unknotted projective plane $P$ corresponds to a diagram with one generator $x$ and a twisted band relation which implies $x^2\equiv1$ (or equivalent involutory relation). This relation is automatically satisfied by the operation of kei. Thus, the fundamental kei of the connected sum $K(\tau^0_2(\#^{l}t_{2,q}) \# P)$ is isomorphic to $K(\tau^0_2(\#^{l}t_{2,q}))$.
    Similarly, the standard torus $T$ does not induce any non-trivial relations in the kei structure, and $K(\tau^0_2(\#^{l}t_{2,q}) \# T)$ is isomorphic to $K(\tau^0_2(\#^{l}t_{2,q}))$.
    
    It follows that the number of colorings remains invariant:
    $$ \#\Col_{R_q}(\tau^0_2(\#^{l}t_{2,q}) \# P) = \#\Col_{R_q}(\tau^0_2(\#^{l}t_{2,q})) = q^{l+1}, $$
    and similarly for $\# T$:
    $$ \#\Col_{R_q}(\tau^0_2(\#^{l}t_{2,q}) \# T) = \#\Col_{R_q}(\tau^0_2(\#^{l}t_{2,q})) = q^{l+1}. $$

    Now we apply the same argument as in the sphere case. Since we have established that $\#\Col_{R_q}(\tau^0_2(\#^{l}t_{2,q'})) < q^{l+1}$ (where $q' < q$), the same inequality holds for the stabilized surfaces:
    $$ \#\Col_{R_q}(\tau^0_2(\#^{l}t_{2,q'}) \# P) = \#\Col_{R_q}(\tau^0_2(\#^{l}t_{2,q'}) \# T) = \#\Col_{R_q}(\tau^0_2(\#^{l}t_{2,q'})) < q^{l+1}. $$
    
    Comparing the two results:
    \begin{align*}
        \#\Col_{R_q}(\tau^0_2(\#^{l}t_{2,q}) \# P) > \#\Col_{R_q}(\tau^0_2(\#^{l}t_{2,q'}) \# P). \\
        \#\Col_{R_q}(\tau^0_2(\#^{l}t_{2,q}) \# T) > \#\Col_{R_q}(\tau^0_2(\#^{l}t_{2,q'}) \# T). 
    \end{align*}
   
    Thus, we conclude:
    $$
    \tau^0_2(\#^{l}t_{2,q})\# P \ncong \tau^0_2(\#^{l}t_{2,q'})\# P,
    \qquad
    \tau^0_2(\#^{l}t_{2,q})\# T \ncong \tau^0_2(\#^{l}t_{2,q'})\# T.
    $$
    Therefore, by varying $q$, we obtain infinitely many pairwise non-isotopic surfaces with the same bridge number.
\end{proof}

\begin{figure}
    \centering
    \includegraphics[width=0.5\linewidth]{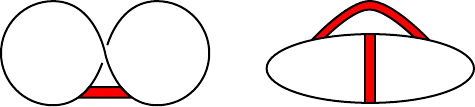}
    \caption{Banded unlink diagrams of $P$ (left) and $T$ (right)}
    \label{fig:P_and_T}
\end{figure}


\subsection{Surface knots in $4$-manifolds with the common knot groups}

In this section, we study surface knots with isomorphic knot groups but distinct fundamental quandles. In $S^4$, Tanaka and Taniguchi \cite{tanaka20252} showed that twist-spun knots can be distinguished by their quandle type even when their knot groups are isomorphic. As a direct consequence of Theorem~\ref{thm:qdle_presentation} and Corollary~\ref{cor:local_surface}, this result extends to local surface knots in arbitrary $4$-manifolds without 
$1$-handles:

 \begin{cor}\label{cor:quandle_fund_group}
    Let $X$ be a $4$-manifold without $1$-handle. There exist infinitely many triples of pairwise non-isotopic local surfaces $\{S_1,S_2,S_3\}$ embedded in $X$ such that 
	\begin{enumerate}[(1)]
		\item the knot groups $G(S_1), G(S_2), G(S_3)$ are isomorphic to each other, 
		\item the knot quandles $Q(S_1),Q(S_2),Q(S_3)$ are pairwise non-isomorphic. 
	\end{enumerate}
\end{cor}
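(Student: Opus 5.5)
We will transplant the triples of Tanaka and Taniguchi \cite{tanaka20252} from $S^4$ into $X$ inside a small $4$-ball, and use Corollary~\ref{cor:local_surface} to see that quandles are unchanged. Concretely, \cite{tanaka20252} gives infinitely many triples $\{S'_1,S'_2,S'_3\}$ of twist-spun knots in $S^4$ with pairwise isomorphic knot groups and pairwise non-isomorphic knot quandles. Since $S'_i$ is compact, we may isotope it into a $4$-ball $B\subset S^4$. We then fix a smoothly embedded $4$-ball $B^4\subset X$, identify $B$ with $B^4$, and let $S_i\subset B^4\subset X$ be the image of $S'_i$. By construction each $S_i$ is local.

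For the quandle statement (2), we will invoke Corollary~\ref{cor:local_surface}. Here $X$ has a handle decomposition without $1$-handles, so it yields $Q(S_i)\cong Q(S'_i)$. The $Q(S_i)$ are therefore pairwise non-isomorphic. Because the fundamental quandle is an isotopy invariant (Theorem~\ref{thm:qdle_presentation} together with the remark following it), the $S_i$ are also pairwise non-isotopic. Distinct triples in the family of \cite{tanaka20252} have distinct quandles, so we again obtain infinitely many triples.

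For the group statement (1), we would mirror the proof of Corollary~\ref{cor:local_surface} at the group level. We choose a banded unlink diagram $\D=(\K,L,v)$ of $(X,S_i)$ in which $L\cup v$ is split from $\K$. By Lemma~\ref{lem:knot_group_BUD}, $G(S_i)$ is then presented by the generators $\mathbf{x}$ with the relations $R'_P$ coming from $L\cup v$, together with the $2$-handle relations of $\K$. Since $X$ has no $1$-handles, $\mathbf{a}=\emptyset$. A $2$-handle component of $\K$ that is split from $L\cup v$ reads the empty word, so it contributes only trivial relations. We conclude that $G(S_i)\cong G(S'_i)$, and hence the $G(S_i)$ are pairwise isomorphic.

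The step needing the most care is the splitting claim, namely that a local surface admits a banded unlink diagram with $L\cup v$ disjoint from and unlinked with $\K$, with the relevant disks in $X_{1/2}$ and $X_{5/2}$. This is already asserted in the proof of Corollary~\ref{cor:local_surface}. The plan is to take the ball $B^4$ inside the $0$-handle, where the movie of $S_i$ sits in a small $3$-ball of each level set away from the attaching regions. A cleaner alternative for (1) is to use Seifert–van Kampen directly: $E(S_i)$ is the union of $B^4\setminus\nu(S_i)$ and $X\setminus B^4$, glued along $S^3$. Since $\pi_1(X\setminus B^4)\cong\pi_1(X)$ is trivial when $X$ has no $1$-handles, this gives $G(S_i)\cong G(S'_i)$ immediately. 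I would use this argument as the primary proof of (1).
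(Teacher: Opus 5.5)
Your proposal is correct and follows the paper's proof. Both place the Tanaka--Taniguchi twist-spun triples in a $4$-ball in $X$, use Corollary~\ref{cor:local_surface} to get $Q(S_i)\cong Q(S'_i)$, and inherit pairwise non-isomorphism (hence non-isotopy) from $S^4$. Your Seifert--van Kampen step $G(S_i)\cong G(S'_i)*\pi_1(X)\cong G(S'_i)$ spells out the group transfer that the paper leaves to ``the same discussion as \cite{tanaka20252}''.
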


\begin{proof}
    Let $p, q, r$ be pairwise distinct integers greater than $1$, and let
    \[S_1 = \tau^p(t_{q,r}), \quad S_2 = \tau^q(t_{r,p}), \quad S_3 = \tau^r(t_{p,q}).\]
    Take a $4$-ball $B^4$ which contains the surface links $S_1, S_2, S_3$ in $X$. Embed this $B^4$ in $S^4$, and denote the corresponding surfaces in $S^4$ by $S'_1, S'_2, S'_3$. By Corollary~\ref{cor:local_surface}, $Q(S_i)$ and $Q(S'_i)$ are isomorphic for each $i$. By the same discussion as \cite{tanaka20252}, we can prove that $G(S_1), G(S_2), G(S_3)$ are isomorphic, and $Q(S_1), Q(S_2), Q(S_3)$ are pairwise non-isomorphic. By changing the choice of $p, q, r$, we obtain infinitely many triples of such $(S_1, S_2, S_3)$.
\end{proof}

We further construct explicit examples of surfaces representing non-trivial homology classes in $\CP^2\#m\overline{\CP}^2$, which are in particular not local, and show that the same phenomenon occurs in this setting:

\begin{thm}\label{thm:infinite_qdl_CP2}
	For every integer $m\geq0$, there exist infinitely many triples of pairwise homologous surfaces $\{S_1,S_2,S_3\}$ embedded in $\CP^2\#m\overline{\CP}^2$ such that 
	\begin{enumerate}[(1)]
		\item the knot groups $G(S_1), G(S_2), G(S_3)$ are isomorphic to each other, 
		\item the knot quandles $Q(S_1),Q(S_2),Q(S_3)$ are pairwise non-isomorphic. 
	\end{enumerate}
\end{thm}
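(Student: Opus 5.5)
The plan is to combine the Tanaka–Taniguchi triples with the non-local surface $\Sigma_d$ built above, using an even degree. Let $p,q,r$ be pairwise distinct integers greater than $1$. Put
\[
S_1=\tau^p(t_{q,r})\#\Sigma_2,\quad S_2=\tau^q(t_{r,p})\#\Sigma_2,\quad S_3=\tau^r(t_{p,q})\#\Sigma_2
\]
in $\CP^2\#m\overline{\CP}^2$. The connected sum is taken inside a small ball in the $\CP^2$ summand, away from the Kirby diagram.

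Homology is straightforward. The twist-spun knot lies in a ball and is null-homologous, so each $S_i$ represents the class $[\Sigma_2]$. That class is $2[\CP^1]$ plus the contributions from the $\overline{\CP}^2$ summands, which is nonzero. Hence the three surfaces are pairwise homologous and none of them is local.

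\textbf{Knot groups.} I would draw a banded unlink diagram of $S_i$ as the disjoint union, joined by one trivial band, of a diagram of the twist spin and the diagram of $\Sigma_2$ from Figure~\ref{fig:Sigma_d}. Lemma~\ref{lem:knot_group_BUD} then gives $G(S_i)$ as the amalgam of $G(\tau(\cdot))$ and $G(\Sigma_2)$, with the meridian of one identified with the meridian of the other. This is the usual connected-sum formula.

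In $G(\Sigma_2)$, the computation for $C_2$ gives a single meridian generator $x$ subject to the $2$-handle relations. By the same reading as for $C_d$, each $\pm1$-framed $2$-handle contributes $x^{\pm2}=1$. So $G(\Sigma_2)\cong\Z_2$, generated by the meridian, and
\[
G(S_i)\cong G(\tau(\cdot))/\langle\!\langle \mu^2\rangle\!\rangle .
\]
Tanaka–Taniguchi produce an isomorphism $G(S'_1)\cong G(S'_2)\cong G(S'_3)$. I would check that it sends a meridian to a meridian, possibly up to inversion; this is the standard feature of their construction via fibered twist spins. Then it descends to the quotients by $\langle\!\langle\mu^2\rangle\!\rangle$, since $\mu^{-2}$ is trivial exactly when $\mu^2$ is. I expect this to be the main obstacle. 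The quotient kills most of the group: $\mu^2=1$ with all meridians conjugate. If meridian preservation is not available from \cite{tanaka20252}, I would instead compute the three quotient groups explicitly from the presentations $\langle G(t)\,,\,\mu\mid \mu^{-k}g\mu^{k}=\ldots\rangle$ of twist spins.

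\textbf{Quandles.} Once the groups agree, I must show the quandles differ, and here the choice of $\Sigma_2$ works against us. Corollary~\ref{cor:K(S+C_d)} shows the kei is unchanged by $\#\Sigma_2$, but the quandle acquires the operator relation $x^2\equiv 1$, so $Q(S_i)$ is the involutory quotient, i.e.\ the fundamental kei $K(\tau(\cdot))$. The Tanaka–Taniguchi invariant distinguishing the triple must therefore survive passage to the kei.

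My first attempt is to use colorings by finite kei, for instance dihedral $R_n$ or kei quotients of $Q(t_{q,r})$. Twist spins $\tau^k$ have $\Col_K(\tau^k(t))=\Col_K(t)$ when $k$ is even, and constant-type colorings otherwise. Choosing $p,q,r$ of different parities, or appropriate $n$, should then yield different counts. If parity obstructs this, I would replace $\Sigma_2$ by $\Sigma_d$ with $d$ even and large. Since $x^d\equiv1$ together with the kei axiom still yields the kei, the same argument applies, and the infinitude comes from varying $p,q,r$.
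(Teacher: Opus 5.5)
\textbf{The construction with $\Sigma_2$ cannot satisfy (2).} For $t_{q,r}$, $t_{r,p}$, $t_{p,q}$ to be knots, $p,q,r$ must be pairwise coprime, so at least two of them are odd. Coprimality is also needed for Milnor's identification of the three branched covers, on which the isomorphism of the twist-spun groups rests.

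You correctly see that $Q(S_i)$ collapses to the fundamental kei of $\tau^{n_i}(k_i)$. But $\Type(Q(\tau^{n}(k)))=n$ gives $y^{n}\equiv 1$, and together with $y^{2}\equiv 1$ this forces $y\equiv 1$ when $n$ is odd. So two of your three quandles are the one-element quandle. No kei-coloring count can repair this, since parity takes only two values.

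Your fallback rests on a misreading. $Q(\tau^{n}(k)\#\Sigma_d)$ is $Q(\tau^{n}(k))$ modulo $x^{\gcd(n,d)}\equiv 1$, and no kei axiom is imposed unless $\gcd(n,d)\le 2$.

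The missing idea is the paper's: choose $d$ to be a common multiple of $p,q,r$ (the paper takes $d=pqr$). Then the operator relation $x^{d}\equiv 1$ contributed by $\Sigma_d$ is already implied by the type (Lemma~\ref{lem:twist_spun+Sigmad}). Hence $Q(S_i)\cong Q(\tau^{n_i}(k_i))$, and Theorem~\ref{thm:twist_spun_type} separates the three quandles by type, with no passage to keis.

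\textbf{Your group step also fails, for a structural reason.} For a $2$-knot, $\partial E(S)\cong S^2\times S^1$, so $Q(S)$ is the coset quandle on $\langle\mu\rangle\backslash G(S)$ with $\langle\mu\rangle g\triangleright\langle\mu\rangle h=\langle\mu\rangle gh^{-1}\mu h$. An isomorphism $G(\tau^p(t_{q,r}))\to G(\tau^q(t_{r,p}))$ carrying a meridian to a meridian or its inverse would therefore give quandles of equal type, contradicting Theorem~\ref{thm:twist_spun_type}. So the meridian-preserving isomorphism you hope to extract from \cite{tanaka20252} does not exist.

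Concretely, take $\Sigma_2$ with $p$ even and $q,r$ odd. Centrality of $\mu^{q}$ together with $\mu^{2}=1$ makes $\mu$ central, so $G(S_2)\cong G(S_3)\cong\Z_2$. Meanwhile $G(S_1)\cong G(t_{q,r})/\langle\!\langle\mu^{2}\rangle\!\rangle$ surjects onto the non-abelian triangle group $\Delta(2,q,r)$.

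Your amalgam formula $G(S_i)\cong G(\tau)/\langle\!\langle\mu^{d}\rangle\!\rangle$ is correct. The paper's proof instead writes $G(S_i)$ as a free product $G(\tau)*G(\Sigma_d)$. This is inconsistent with the relation $x_k^{d}\equiv 1$ it uses on the quandle side, so it does not resolve the obstacle you flagged.

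By my own check (not in the paper), the obstacle is fatal to the paper's surfaces too. Take $d=pqr$ and $1/p+1/q+1/r<1$, which covers every admissible triple except $(2,3,5)$. The center of $G(\tau^p(t_{q,r}))$ is $\Z^2$, with basis $\mu^{p}$ and the generator of the center of $G(t_{q,r})$; the quotient by them is the centerless triangle group. Hence $Z(G(S_1))\cong\Z\oplus\Z_{qr}$, and likewise $\Z\oplus\Z_{rp}$ and $\Z\oplus\Z_{pq}$ for $S_2,S_3$. So condition (1) needs a genuinely different construction.
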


For any $x, y$ in $Q$ and $n$ in $\mathbb{Z}$, let $x^{y^n}$ denote the element obtained by applying the right operation by $y$ on $x$, $n$ times. Define the \textit{type} of $Q$ by 
$$\Type(Q) := \min \{ n \in \mathbb{N} \mid y^n\equiv1 \ \text{for any} \ y \in Q \},$$
with the convention that $\min \emptyset = \infty$.

In \cite{zeeman1965twisting}, Zeeman showed that the $n$-twist spun knot $\tau^n(k)$ is a fibered $2$-knot for every $n>0$, whose fiber is a once-punctured copy of $M^n(k)$, the $n$-fold cyclic branched cover of $S^3$ along $k$. 
In $S^4$, the following theorem of Tanaka and Taniguchi shows that twist–spun knots can be distinguished by their quandle type.

\begin{thm}[\cite{tanaka20252}]\label{thm:twist_spun_type}
	For $i\in \{1,2\}$, let $k_i$ be a non-trivial classical knot in $S^3$, and $n_i\in \N$ is greater than $1$. Then we have the followings: 
	\begin{enumerate}[(1)]
		\item $\Type(Q(\tau^{n_i}(k_i))=n_i$,
		\item If $Q(\tau^{n_1}(k_1))\cong Q(\tau^{n_2}(k_2))$, then we have $n_1=n_2$. 
	\end{enumerate}
\end{thm}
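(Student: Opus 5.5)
The plan is to prove (1) by showing two inequalities, $\Type(Q(\tau^{n}(k)))\le n$ and $\Type(Q(\tau^{n}(k)))\ge n$, and then to deduce (2) from (1). For (2) it suffices to note that $\Type$ is an isomorphism invariant of quandles. An isomorphism $\phi\colon Q\to Q'$ carries the operator $f_y^{m}$ to $f_{\phi(y)}^{m}$. Hence $y^m\equiv 1$ for all $y\in Q$ if and only if the same holds in $Q'$. So $Q(\tau^{n_1}(k_1))\cong Q(\tau^{n_2}(k_2))$ forces $n_1=n_2$ by (1).

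For (1), write $S=\tau^{n}(k)$ and $G=G(S)$, and let $\mu\in G$ be the meridian. I will work with the noose description of the fundamental augmented quandle.
\begin{itemize}
\item Since $S$ is a connected $2$-knot, $Q(S)$ is connected as a quandle. Thus every $y$ is of the form $x_0^{g}$, and $f_{x_0^g}$ is conjugate to $f_{x_0}$ by $g$. So $\Type(Q(S))$ is the least $m\ge1$ such that $\mu^m$ acts trivially on $Q(S)$.
\item As a $G$-set, $Q(S)$ is the coset space $P\backslash G$, where $P$ is the stabiliser of the basic noose. $P$ is generated by $\mu$ together with the image of the peripheral subgroup; for a $2$-sphere that subgroup is just $\langle\mu\rangle$.
\item Hence $\mu^m$ acts trivially exactly when $\mu^m\in\bigcap_{g}g^{-1}Pg$. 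For a nontrivial knot this is equivalent to $\mu^m$ being central in $G$, since $\langle\mu\rangle$ is cyclic and normally generates $G$.
\end{itemize}

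Next I use Zeeman's fibration. The exterior of $\tau^{n}(k)$ fibers over $S^1$ with fiber the punctured $n$-fold cyclic branched cover $M^{n}(k)^{\circ}$. This gives
\[
G\cong \pi_1(M^{n}(k))\rtimes_{h}\langle\mu\rangle ,
\]
where $h$ is induced by a generator of the deck group of the branched cover, a map of order exactly $n$. Since $h^{n}=\mathrm{id}$, the element $\mu^{n}$ commutes with the fiber group and with $\mu$, so it is central. This gives the upper bound $\Type\le n$; it is also Satoh's observation that $Q(\tau^n(k))$ is a quotient of the $n$-quandle of $k$.

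For the lower bound, suppose $\mu^m$ is central for some $0<m<n$. Then $h^{m}$ acts on $\pi_1(M^{n}(k))$ as an inner automorphism, in fact trivially after adjusting by the centraliser. The main obstacle is to rule this out for nontrivial $k$, i.e. to show the deck transformation acts on $\pi_1$ with outer order exactly $n$. I would first try this on homology or on a quotient. Consider the Alexander module $H_1(M^{n}(k))\cong \mathbb{Z}[t^{\pm1}]/(\Delta_k(t),\,1+t+\dots+t^{n-1})$, on which $h$ acts by $t$. If that fails, for instance when $M^n(k)$ is a homology sphere, I would instead use geometrization: a finite-order diffeomorphism of an irreducible $3$-manifold that is homotopic to the identity and has nonempty fixed set of codimension two must be the identity (Borel-type rigidity, as in Plotnick's work on fibered knots). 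Here the fixed set is the branch locus $\tilde k$, and $h^{m}$ is not the identity. That would show $\mu^m$ is not central, hence $\mu^m$ acts nontrivially on $Q(S)$, giving $\Type(Q(S))=n$.
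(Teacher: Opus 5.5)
The paper does not prove this statement; it is quoted from Tanaka--Taniguchi and used as a black box, so your argument has to stand on its own.

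\textbf{What is correct.} The first half of your argument holds.
\begin{itemize}
\item (2) follows from (1), because $\Type$ is an isomorphism invariant.
\item $Q(\tau^n(k))\cong\langle\mu\rangle\backslash G$ as a $G$-set.
\item $\Type$ is the least $m\ge 1$ with $\mu^m$ central. This holds for every $2$-knot, not only nontrivial ones. The actual reason is that $\mu$ has infinite order and maps to a generator of $H_1(G)\cong\Z$, so a conjugate of $\mu^m$ that lies in $\langle\mu\rangle$ must equal $\mu^m$.
\item The upper bound from Zeeman's fibration is correct.
\end{itemize}
Your aside about the $n$-quandle is backwards: $Q_n(k)$ is a quotient of $Q(\tau^n(k))$, not conversely. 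By your own coset description, $Q(\tau^5(t_{2,3}))$ has $|\pi_1(\Sigma(2,3,5))|=120$ elements, while $Q_5(t_{2,3})$ has $12$.

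\textbf{The gap is the lower bound.} Take the basepoint $p$ on the fixed circle $\tilde k$, where the meridian section passes. Then centrality of $\mu^m$ means exactly that $h_*^m$ is the identity automorphism of $\pi_1(M^n(k),p)$. You cannot replace this by ``$h^m$ is inner'' or ``$h^m$ is homotopic to the identity'', because those weaker conditions genuinely occur.

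Your stated goal, that $h_*$ has outer order exactly $n$, is false. It fails precisely for the twist-spun torus knots the paper uses. If $\gcd(n,pq)=1$, then $M^n(t_{p,q})$ is the Brieskorn manifold $\Sigma(p,q,n)$. Its deck transformation is an element of the Seifert circle action, hence isotopic to the identity. Compare Gordon's splitting $G(\tau^r(t_{p,q}))\cong\pi_1(M^r(t_{p,q}))\times\Z$, quoted in the proof of Theorem~\ref{thm:infinite_qdl_CP2}. For $\tau^5(t_{2,3})$ this can be seen purely algebraically: $\pi_1(\Sigma(2,3,5))$ is the binary icosahedral group, whose outer automorphism group has order $2$, so any automorphism of order $5$ is inner.

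Homology gives nothing in these cases, since they are homology spheres. Your fallback rigidity principle is false as stated. A rotation of $S^3$ about an unknot is a non-identity periodic map, isotopic to the identity, with a circle of fixed points. So is the order-$5$ deck transformation of the irreducible manifold $\Sigma(2,3,5)$. Borel's theorem needs an aspherical manifold with centerless $\pi_1$, which excludes every spherical and Seifert fibred case. Moreover, $M^n(k)$ need not be irreducible: it is a connected sum when $k$ is composite.

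\textbf{What actually has to be shown.} You need the \emph{based} statement $h_*^m\neq\mathrm{id}$ on $\pi_1(M^n(k),p)$.
\begin{itemize}
\item In the Brieskorn case, $h_*$ is conjugation by the track of $p$ under the circle action. This is a power $\gamma^j$, with $j$ prime to $n$, of the class $\gamma$ of the exceptional fibre $\tilde k$. So one must prove that $\gamma^{jm}$ is not central for $0<m<n$.
\item In general, first reduce to $m\mid n$, using that $\mu^n$ is central.
\item The case $m=1$ is easy. Then $G$ is abelian, so $\pi_1(M^n(k))=1$, which contradicts the Smith conjecture together with the Poincar\'e conjecture.
\item The proper divisors $1<m<n$ need further genuine $3$-manifold input, which your proposal does not supply.
\end{itemize}
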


\begin{lem}\label{lem:twist_spun+Sigmad}
    Let $\tau_{mn}^n(k)=\tau^n(k)\# \Sigma_{mn}$, where  $\Sigma_{mn}$ denotes the surface defined previously, and let $\Sigma_d = C_d \#^{m} \overline{C}_d$ be the $m$-fold connected sum of the degree $d$ algebraic curve with its complex conjugate. Then for any $m\in\mathbb N$,
    \[
    Q(\tau_{mn}^n(k)) \cong Q(\tau^n(k)).
    \]
\end{lem}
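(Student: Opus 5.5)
We work directly with the presentation of Theorem~\ref{thm:qdle_presentation}. First we choose a banded unlink diagram $\D$ of $\tau^n(k)\#\Sigma_{mn}$ in $\CP^2\#m\overline{\CP}^2$. Following the construction in Figure~\ref{fig:Sigma_d}, we take the split union of a diagram $\D_0$ of $\tau^n(k)$, drawn inside a small ball away from the Kirby diagram $\K$, with the diagram of $\Sigma_{mn}$. We then join the two surfaces by a single band that crosses nothing. The ambient manifold has no $1$-handles, so by Corollary~\ref{cor:no_1-handle} the resulting presentation of $Q(\tau^n_{mn}(k))$ has only primary generators.

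The presentation splits into two pieces:
\begin{itemize}
\item the primary generators and relations of $Q(\D_0)=Q(\tau^n(k))$;
\item the generators and relations coming from $\Sigma_{mn}$, namely the arcs of the $C_{mn}$ and $\overline{C}_{mn}$ pieces, their band relations, and the $2$-handle operator relations.
\end{itemize}
As in the computation of $Q(C_d)$, the band relations of each algebraic curve identify all its arcs to one generator. The connecting bands between $C_{mn}$ and the copies of $\overline{C}_{mn}$, and the band to $\D_0$, then identify all of these with one arc $x_0$ of $\D_0$.

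The only new relations are the operator relations from the $2$-handles. For each $2$-handle attaching circle, the word read off is $x_0^{\pm mn}$, so the relations have the form $x_0^{mn}\equiv 1$. Here $\pm$ refers to the $+1$-framed handle of $\CP^2$ and the $-1$-framed handles of the $\overline{\CP}^2$ summands. By Lemma~\ref{lem:=1}, such a relation then holds for every arc in the component containing $x_0$. Since $\tau^n(k)\#\Sigma_{mn}$ is connected, this is every generator.

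So the claim reduces to showing that the relation $y^{mn}\equiv 1$ for all $y$ already holds in $Q(\tau^n(k))$. By Theorem~\ref{thm:twist_spun_type}(1), $\Type(Q(\tau^n(k)))=n$, so $y^n\equiv1$ for every $y$, and hence $y^{mn}\equiv 1$. Adding relations that already hold does not change the quandle: the identity map on generators induces mutually inverse homomorphisms. This gives $Q(\tau^n_{mn}(k))\cong Q(\tau^n(k))$. The same argument with $K$ a kei is the one used in Corollary~\ref{cor:K(S+C_d)}.

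The main obstacle is making the reduction from the diagram precise. We must check that the $2$-handle relations of $\Sigma_{mn}$, including those of the $\overline{C}_{mn}$ summands with their reversed orientation and $-1$ framing, are exactly powers $x_0^{\pm mn}\equiv1$ of a single generator. We must also check that the arcs of $\Sigma_{mn}$ do not introduce any primary relations beyond identifying generators.

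To do this, we first read the $d=3$ diagram of Figure~\ref{fig:cpx_curve} and the diagram in Figure~\ref{fig:Sigma_d}. From these we argue that the $2$-handle passes over the $d$ parallel strands once each, all carrying the same label. Orientation reversal changes only the sign of the exponent, which is harmless because $\equiv 1$ is symmetric under inversion.

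A secondary point is that the connected-sum band may be arranged with empty word. This is where we use that $\tau^n(k)$ sits in a ball disjoint from $\K$, as in Corollary~\ref{cor:local_surface}.
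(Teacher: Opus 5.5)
Your proposal is correct and follows essentially the same route as the paper's proof. In both, the connected-sum diagram contributes only the operator relations $x^{mn}\equiv 1$ (propagated by Lemma~\ref{lem:=1}), and these are redundant because $\Type(Q(\tau^n(k)))=n$ already gives $y^{n}\equiv 1$, hence $y^{mn}\equiv 1$, so a Tietze move removes them; your version is more explicit in justifying why the arcs of $\Sigma_{mn}$ collapse to the single generator $x_0$ and why the connecting band carries the empty word, which the paper simply asserts.
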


\begin{proof}
	Let $D$ be a banded unlink diagram of $\tau^n(k)$. Then the quandle $Q(\tau^n(k))$ admits a presentation $\langle x_1,\dots,x_s \mid r_1,\dots ,r_t\rangle$, where each $x_i$ is the primary generator associated with an arc of $D$, and each $R_i$ is the primary relator determined by a crossing. Since $type(Q(\tau^{n}(k))=n$, we may impose additional operator relators and rewrite the presentation in the form $[x_{1},\dots,x_{s}\ |\ r_{1},\dots,r_{t},x_{i}^n\equiv1 \ \forall i\in \{ 1,\dots,s \}]$. 
	
	Now consider a banded unlink diagram $D'$ obtained by taking the connected sum of the diagrams of $\tau^n_{d}(k)$ and $\Sigma_d$. Take $d=mn\ (m\in \N)$, then $Q(\tau^n_{d}(k))$ has a presentation 
    \[
    [x_{1},\dots,x_{s}\ |\ r_{1},\dots,r_{t},x_{i}^n\equiv 1 \ \forall i\in \{ 1,\dots,s \}, x_{k}^d\equiv1],\]
    where $x_k$ is the arc that we applied the connected sum. By Lemma \ref{lem:=1}, $Q(\tau^n_{d}(k))$ can be further rewritten in 
    \[
     [x_{1},\dots,x_{s}\ |\ r_{1},\dots,r_{t},x_{i}^n\equiv1 ,x_{i}^d\equiv1\ \forall i\in \{ 1,\dots,s \}].\]
    Since $d=mn$, the relation $x_i^{mn}\equiv 1$ follows from $x_{i}^n \equiv 1$, and it may be removed from the presentation by applying Tietze move. Then $Q(\tau_{mn}^n(k))\cong Q(\tau^n(k))$. 
\end{proof}

\begin{lem}\label{lem:CP2_twist_spun_type}
For $i \in \{1,2\}$, let $k_i$ be a non-trivial classical knot in $S^3$, and let $n_i$ be integer greater than $1$. 
Suppose that both $n_1$ and $n_2$ divide $d$. 
If $Q(\tau_d^{n_1}(k_1))$ and $Q(\tau_d^{n_2}(k_2))$ are isomorphic, then $n_1 = n_2$.
\end{lem}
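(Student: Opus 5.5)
The plan is to reduce the statement to Theorem~\ref{thm:twist_spun_type} by way of Lemma~\ref{lem:twist_spun+Sigmad}. Since $n_i$ divides $d$, write $d=m_in_i$ with $m_i\in\N$. Then $\tau_d^{n_i}(k_i)=\tau^{n_i}(k_i)\#\Sigma_{m_in_i}$ is exactly the surface treated in Lemma~\ref{lem:twist_spun+Sigmad}, with $m$ replaced by $m_i$. That lemma gives an isomorphism
\[
Q(\tau_d^{n_i}(k_i))\cong Q(\tau^{n_i}(k_i))\qquad (i=1,2).
\]
Here the right-hand side is the fundamental quandle of the twist-spun knot in $S^4$, computed from a banded unlink diagram with no $1$-handle part (Corollary~\ref{cor:local_surface}).

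Next, I assume $Q(\tau_d^{n_1}(k_1))\cong Q(\tau_d^{n_2}(k_2))$. Composing with the two isomorphisms above gives $Q(\tau^{n_1}(k_1))\cong Q(\tau^{n_2}(k_2))$. Both $k_i$ are non-trivial and both $n_i>1$, so Theorem~\ref{thm:twist_spun_type}(2) applies and yields $n_1=n_2$.

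As a more intrinsic alternative, I could argue with $\Type$. It is an isomorphism invariant of quandles, because an isomorphism preserves the relation $y^n\equiv 1$, which says that $x^{y^n}=x$ for all $x$. So $\Type(Q(\tau_d^{n_i}(k_i)))=\Type(Q(\tau^{n_i}(k_i)))=n_i$ by Theorem~\ref{thm:twist_spun_type}(1), and the hypothesis forces $n_1=n_2$. I would include this remark so the argument does not depend on how part (2) was proved.

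The main obstacle is checking that Lemma~\ref{lem:twist_spun+Sigmad} really applies with the same $d$ for both surfaces. In the lemma's proof, the operator relation $x_k^d\equiv 1$ coming from the $\Sigma_d$ summand spreads to all generators via Lemma~\ref{lem:=1}. It is then absorbed by the relations $x_i^{n}\equiv1$, which hold because $\Type(Q(\tau^n(k)))=n$, and absorption needs only $n\mid d$. The point to verify is that the $m$ in $\Sigma_d=C_d\#^m\overline{C}_d$ (the number of $\overline{\CP}^2$ summands) plays no role in the quandle presentation. This holds because, as in Lemma~\ref{lem:coloring_of_cntsum_with_cpxcurve}, each extra summand only contributes relations $x^{\pm d}\equiv 1$ on the same component. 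So the divisibility hypothesis is the only input needed.
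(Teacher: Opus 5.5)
Your proposal is correct and is essentially the paper's proof: Lemma~\ref{lem:twist_spun+Sigmad} gives $Q(\tau_d^{n_i}(k_i))\cong Q(\tau^{n_i}(k_i))$ for $i=1,2$, and Theorem~\ref{thm:twist_spun_type} then forces $n_1=n_2$. You are right to separate the multiplier $d/n_i$ from the number of $\overline{\CP}^2$ summands, which the paper denotes by the same letter $m$, and your alternative via the invariance of $\Type$ is just part (1) of Theorem~\ref{thm:twist_spun_type} unpacked.
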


\begin{proof}
	By Lemma \ref{lem:twist_spun+Sigmad}, $Q(\tau_d^{n_1}(k_1))\cong Q(\tau^{n_1}(k_1))\cong Q(\tau^{n_2}(k_2))\cong Q(\tau_d^{n_2}(k_2))$. Then $n_1=n_2$ holds by Theorem \ref{thm:twist_spun_type}. 
\end{proof}

Finally let us prove the main theorem in this section.

\begin{proof}[Proof of Theorem \ref{thm:infinite_qdl_CP2}] 
	Let $p,q,r$ be pairwise distinct integers greater than $1$. Take $d=pqr$. Let
    \begin{align*}
    S_1 =\tau^p_d(t_{q,r})\quad S_2=\tau^q_d(t_{r,p})\quad S_3 =\tau^r_d(t_{p,q}),
    \end{align*}
    where $t_{q,r},t_{r,p},t_{p,q}$ are $(q,r),(r,q),(p,q)$-torus knot, respectively. By using Seifert--van Kampen's theorem, we have
    \begin{align*}
    G(S_1)& =G(\tau^p(t_{q,r}))*G(\Sigma_d), \\
    G(S_2)&=G(\tau^q(t_{r,p}))*G(\Sigma_d), \\
    G(S_3)&=G(\tau^r(t_{p,q}))*G(\Sigma_d).
    \end{align*}
    Gordon \cite{gordon1972twist} proved that the knot group of the twist–spun torus knot $\tau^{r}(t_{p,q})$ is isomorphic to $\pi_{1}(M^{r}(t_{p,q})) \times \mathbb{Z}$, where $M^{r}(t_{p,q})$ is the $r$-fold cyclic branched cover of $S^3$ along $t_{p,q}$. 
    
    Furthermore, the $3$-manifolds $M^{p}(t_{q,r})$, $M^{q}(t_{r,p})$, and $M^{r}(t_{p,q})$ are known to be mutually homeomorphic \cite{milnor19753}. As a consequence, the knot groups associated to the twist–spun knots $G(\tau^{p}(t_{q,r}))$, $G(\tau^{q}(t_{r,p}))$, and $G(\tau^{r}(t_{p,q}))$ are mutually isomorphic, so $G(S_1)\cong G(S_2)\cong G(S_3)$. 
	Since $p, q, r$ are pairwise distinct, by Lemma \ref{lem:CP2_twist_spun_type}, the knot quandles $Q(S_1), Q(S_2), Q(S_3)$ are pairwise non-isomorphic. By changing the choice of $p,q,r$, we can give infinitely many triples of such $(S_1,S_2,S_3)$.
\end{proof}

\section*{Acknowledgement}
The author would like to express profound gratitude to Professor Hiraku Nozawa for his invaluable guidance, insightful comments, and continuous support throughout the development of this paper. The author is deeply grateful to Delphine Moussard, Sylvain Courte, and Qiuyu Ren for fruitful discussions on related topics; their generous sharing of ideas has greatly broadened the author's mathematical perspective and provided valuable inspiration. Special thanks go to Junpei Yasuda for his patience in answering numerous questions and offering helpful insights.

\bibliographystyle{alpha}
\bibliography{ref.bib}

\end{document}